\newcommand{\per}[1]{
	\operatorname{Per}(#1)
}
\newcommand{\PE}{PolExp\ }
\newcommand{\es}{\emptyset}
\newcommand{\m} {^{-1}} 
\renewcommand{\phi}{\varphi}
\renewcommand{\epsilon}{\varepsilon}
\newcommand {\calf} {{\mathcal {F}}}
\newcommand {\caly} {{\mathcal {Y}}}
\newcommand{\grp}[1]{\langle #1 \rangle}
\newcommand{\Out} {{\mathrm{Out}}}
\newcommand{\Aut} {{\mathrm{Aut}}}
\newcommand{\inc}{\subset}
\newcommand{\Rt}{$\R$-tree}
\newcommand{\ti}{_\sharp }
\newcommand{\rotationless}{pure}
\newcommand{\total}{total PolExp growth}
\newcommand{\Total}{Total PolExp growth}
\begin{document}

\title{PolExp growth for automorphisms of toral relatively hyperbolic groups }
\author{R\'emi Coulon, Arnaud Hilion, Camille Horbez, Gilbert Levitt\thanks{
Rémi Coulon, Arnaud Hilion and Camille Horbez acknowledge support from the Agence Nationale de la Recherche under the Grant \emph{Dagger} ANR-16-CE40-0006-01.
Rémi Coulon and Camille Horbez acknowledge support from the Agence Nationale de la Recherche under the Grant \emph{GoFR} ANR-22-CE40-0004.
Rémi Coulon acknowledges the Région Bourgogne-Franche-Comté under the grant \emph{ANER 2024 GGD}.
His institute receives support from the EIPHI Graduate School (contract ANR-17-EURE-0002). Camille Horbez acknowledges support from the European Research Council under Grant 101040507 Artin-Out-ME-OA (views and opinions expressed are however those of the authors only and do not necessarily reflect those of the European Union or the European Research Council; neither the European Union nor the granting authority can be held responsible for them).
Part of this project was conducted during the \emph{Geometric Group Theory} program at the MSRI in Fall 2016.
}}

\date{\today}

\maketitle

\begin{abstract}
      Let $G$ be a toral relatively hyperbolic group, and let $\varphi\in\Aut(G)$. We prove that, under iteration of $\varphi$, the conjugacy length $\norm{\varphi^n(g)}$ of every element $g\in G$ grows like $n^d\lambda^n$ for some $d\in\N$ and some algebraic integer $\lambda\geq 1$. 
      For a given $\varphi$, only finitely many values of    $d$ and    $\lambda$ occur as $g$ varies in $G$. The same statements hold for the growth of the word length $|\varphi^n(g)|$.
     
      For $G$ hyperbolic, we generalize polynomial subgroups: we show that, for a given growth type  $n^d\lambda^n$ other than $1$, there is a malnormal family of quasiconvex subgroups $K_1,\dots,K_p$ such that a conjugacy class $[g]$ grows at most like $n^d\lambda^n$ if and only if $g$ is conjugate into one of the subgroups $K_i$.
 \end{abstract}

\tableofcontents

\section{Introduction}
Given an automorphism $\varphi\in\Aut(G)$ of a finitely generated group $G$, one may study how the length of an element $g \in G$ or a conjugacy class $[g]$ grows under iteration of $\varphi$. As usual, we denote by $\abs g$ the word length with respect to a finite generating set (whose choice is irrelevant when studying the type of growth), and  we let $\norm g=\min_{h\in G} \abs{hgh\m} $;  
note that the growth of $\norm{\varphi^n(g)}$ only depends on the outer class of $\varphi$, i.e.{} the outer automorphism $\Phi\in\Out(G)$ represented by $\varphi$. This   growth of automorphisms should not be confused with  the   growth of balls  in $G$.

As recalled in \autoref{sommets}, both sequences $\norm{\varphi^n(g)}$ and $|\varphi^n(g)|$ grow like a polynomial times an exponential if $G$ is an abelian group (this follows from linear algebra) a surface group (this follows from the Nielsen-Thurston classification of mapping classes), or a free group (this uses train tracks).
Similar results have been obtained 
by Fioravanti for virtually special groups \cite{Fio}.

In particular, there is no intermediate growth for automorphisms of  these groups: the growth is at most polynomial or at least exponential. On the other hand, the first-named author has proved, using the Rips construction,  that very exotic behaviors are possible for automorphisms of arbitrary groups \cite{Coulon:2022ab}.

In this paper we consider toral relatively hyperbolic groups, i.e.\  
torsion-free groups that are hyperbolic relative to a finite  collection of abelian subgroups of finite rank, see for instance \cite{Bowditch:2012ga,Dahmani:2003hm,Osin:2006cf,Hruska:2010iw} for standard references.
Dahmani-Krishna proved that the growth of $\norm{\varphi^n(g)}$ is at most polynomial or at least exponential in these groups \cite{Dahmani:2023re}. 
Our main result states that the behavior in   toral relatively hyperbolic groups is exactly the same as in abelian or free groups.

\begin{defi}[\PE growth,   spectrum]\label{sga}
    Let $G$ be a finitely generated group, and $\varphi\in\Aut(G)$. We say that $\varphi$, and the outer automorphism $\Phi\in\Out(G)$ represented by $\varphi$, have \emph{\PE growth} if, 
  given $g \in G$, there exist an integer $d \geq 0$, a number $\lambda \geq 1$, and a constant $C > 0$ such that\footnote{The additive constant in (\ref{eqn: main theorem}) will only be needed in degenerate cases, for instance here if $g$ is trivial.}
  \begin{equation}
  \label{eqn: main theorem}
    \frac 1C n^d \lambda^n - C 
    \leq \norm{\varphi^n(g)}
    \leq C n^d \lambda^n + C,
    \quad \forall n \geq 1. 
  \end{equation}
  
    The set of pairs $(d,\lambda)$ which occur as $g$ varies will be called the \emph{spectrum} of $\varphi$ and denoted by $\Lambda_\varphi$, or simply $\Lambda$.
  
  We will write $\norm{\varphi^n(g)} \asymp  n^d\lambda^n$ whenever (\ref{eqn: main theorem}) holds.
  In this case we say that the conjugacy class of $g$ \emph{grows like} $n^d \lambda^n$ under $\varphi$.
\end{defi}

\begin{theo}
\label{main}\label{enplus}
Any automorphism $\varphi$ of a toral relatively hyperbolic group $G$ has \PE growth: for each  conjugacy class $[g]$, the sequence 
$\norm{\varphi^n(g)}$ grows like some $n^d\lambda^n$, with $d \in \N$ and $\lambda \geq 1$.

Moreover: 
\begin{enumerate}
    
    \item $\lambda$ is an algebraic integer, i.e.\ a root of a monic polynomial $P\in\Z[X]$. If $G$ is one-ended, $\lambda$ is an algebraic unit, i.e.\ $P$ further  satisfies $P(0)=\pm1$.
    
   \item \label{enu: main - finiteness} 
    
    Only finitely many    different values of    $(d,\lambda)$   occur in the growth of $\norm{\varphi^n(g)}$ as $g$ varies in $G$   (i.e.{} the spectrum $\Lambda_\varphi$  is finite). 

     \item \label{enu: enplus - unbout}
     If $G$ is one-ended, there exists $N$ depending only on $G$  such that  the integer $d$, the degree of $\lambda$ as an algebraic number, and the cardinality of  $\Lambda_\varphi$, are bounded by $N$.
    
    \item \label{enu: enplus - hyp}If $G$ is one-ended and hyperbolic,   $\norm{\varphi^n(g)}$ is   bounded, grows linearly,   or 
    $\norm{\varphi^n(g)}\asymp\lambda^n$, with $\lambda$ an $r^{\rm th}$ root of the dilation factor of a pseudo-Anosov homeomorphism on a compact surface $\Sigma$, possibly with boundary (with $r$ and $\abs{\chi(\Sigma)}$   bounded in terms of $G$ only).
     
\end{enumerate}
\end{theo}

We believe that (\ref{enu: enplus - unbout}) is true also when $G$ has infinitely many ends, see \autoref{unifo}.
 \autoref{vtf} explains how to extend the theorem to virtually toral relatively hyperbolic groups.

\begin{rema} \label{elts}
   As in Corollary~6.3 of \cite{Levitt:2009hx}, one can control the growth of $\abs{\varphi^n(g)}$   by noting that the element $g$ has the same growth under $\varphi$ as the conjugacy class of $sg$ under the automorphism of $G*\grp{s}$ equal to $\varphi$ on $G$ and sending $s$ to itself.
   It follows that the theorem is also valid for the growth of the sequences $\abs{\varphi^n(g)}$, except that  one has to allow quadratic growth in (\ref{enu: enplus - hyp}). See \autoref{res: word growth fixed conj class} and \autoref {bgex}
   for details. 
   \end{rema}

When $G$ is hyperbolic, PolExp growth and the finiteness of $\Lambda_\varphi$  allow us to use a construction by Paulin \cite{Paulin:1997ba} to define canonical subgroups related to growth, generalizing the polynomial subgroups introduced in \cite{Levitt:2009hx, Dahmani:2023re}. We show in particular:

\begin{theo}\label{raffi}
    Let $G$ be a torsion-free hyperbolic  group, and $\varphi\in\Aut(G)$.
Given $d\in\N$ and $\lambda\geq1$, there exists a malnormal family of quasiconvex subgroups $K_1,\dots,K_p$ such that a non-periodic conjugacy class $[g]$ grows at most like $n^d \lambda^n$ under $\varphi$ if and only if $g$ has a conjugate belonging to some $K_i$. 
\end{theo}

We also prove a version of this theorem for the growth of elements rather than conjugacy classes, see \autoref{slqc}.

Deducing   \autoref{raffi} from \autoref{main}
and \cite{Paulin:1997ba} is rather straightforward, except for
malnormality (recall that the family is malnormal if $gK_ig\m\cap K_j\ne \{1\}$ implies that  $i=j$ and $g\in K_i$). We do not know whether the theorem holds for toral relatively hyperbolic groups, see \autoref{rk:question-rh}. 

\medskip

The proof of our main theorem (\autoref{main}) has two   steps:  
we first handle one-ended groups, and then infinitely-ended groups.

In the one-ended case we use (a variation of) the JSJ decomposition of $G$ (see the survey \cite{Guirardel:2017te} and references therein). Its vertex groups are rigid, abelian, or surface groups. \autoref{main} holds in these groups (see \autoref{sommets}): this is easy for rigid vertex groups because only finitely many  outer automorphisms extend  to automorphisms of $G$; it follows from linear algebra for abelian groups, and from properties of pseudo-Anosov homeomorphisms for surface groups. 

The main step now is to understand the growth of an automorphism   from that  of its restrictions to the vertex groups of the JSJ decomposition. We illustrate the main difficulty of 
  this local-to-global result on a very simple example.
  
Assume that $G$ is a one-ended, torsion-free, hyperbolic group whose 
JSJ decomposition has the form $G = A \ast_C B$, with 
  $C$ a quasiconvex 
  malnormal infinite cyclic subgroup. Also assume that $\varphi$ leaves $A$ and $B$ invariant.
Any element $g \in G$
has a normal form
\begin{equation*}
    g = a_1b_1a_2b_2 \cdots a_pb_p, 
    \quad \text{where} \quad
    a_i \in A, \ b_i \in B,
\end{equation*}
and
\begin{equation*}
    \varphi^n(g) = \varphi^n(a_1)\varphi^n(b_1)\varphi^n(a_2)\varphi^n(b_2) \cdots \varphi^n(a_p)\varphi^n(b_p)
\end{equation*}
is also a normal form.
But uncontrolled cancellations in the edge group $C$ may occur between $\phi^n(a_i)$ and $\phi^n(b_i)$, for instance, so 
knowing growth in $A$ and $B$ is not sufficient.

 To overcome this difficulty, we adopt a more geometric point of view.
Following Scott-Wall \cite{Scott:1979uk}, we view the JSJ decomposition as a graph of spaces $M$ with fundamental group $G$. 
Vertex spaces are rigid, tori, or compact surfaces. 
One may represent (the outer class of) $\varphi$ by a homeomorphism $f$ of this space $M$, and $g$ by a loop $\gamma$. 

To prove \autoref{main} in this setting  
one must   understand the growth of the length of a closed geodesic representing $f^n(\gamma)$. 
As we apply powers of $f$, it picks up length only when it passes through the non-rigid vertex spaces, and 
 the growth of $f^n(\gamma)$ may be estimated from growth in abelian groups and  the Nielsen-Thurston theory of homeomorphisms of surfaces (to be more precise, there may also be linear growth due to twists). 
 
 The problem now is that shortening may occur in the edge spaces.
 Said differently, some complicated loop created by the homeomorphism in a vertex space could be unwrapped by the homeomorphism in another vertex space.
 
 In order to control this,  we equip $M$ with a suitable metric  that is more convenient to manipulate than the word metric of $G$.
  One important feature, coming from hyperbolicity, is that the orbits for the action of edge groups on the universal cover $X$  of $M$ are contracting (denoting by $Y$ such an orbit, the projection of any ball $B$ disjoint from $Y$ onto $Y$ has uniformly bounded diameter), and separated (the projection of one orbit onto another has uniformly bounded diameter).
 This can be profitably used to estimate precisely the length of $f^n(\gamma)$. 
  More details will be given at the beginning of \autoref{espace}.

\medskip

To deal with groups with infinitely many ends, we consider a Grushko decomposition $G=G_1*\dots*G_q*\F_N$ with each $G_i$   one-ended and $\F_N$ free.
We use the completely split train tracks (CT's) introduced by Feighn-Handel \cite{Feighn:2011tt} for free groups and extended to free products by Lyman \cite{Lyman:2022wz}. The numbers $\lambda$ appearing in \autoref{main} come from growth in the groups $G_i$ or from eigenvalues of the transition matrix of a CT.

It turns out, however, that we need more information on the free factors $G_i$ than just the growth of $\norm{\varphi^n(g)}$ and $\abs{\varphi^n(g)}$ for $g\in G_i$. We illustrate this on a simple example. 

\begin{exam}
\label{exa: need palangre growth}
 Consider the automorphism $\varphi$ of 
\begin{equation*}
    G =H* \F_2=H*\grp{a,b}
\end{equation*}
acting on $H$ as some automorphism $\alpha \in \aut{H}$ and sending $a$ and $b$ to $ax$ and $yb$ respectively, for some $x,y\in H$.
Then $\varphi^n(ab)= aw_nb$, where $w_n \in H$ is given by
\begin{equation*}
   w_n = x\alpha(x)\alpha^2(x)\dots \alpha^{n-1}(x)\alpha^{n-1}(y)\dots \alpha^2(y)\alpha(y)y.
\end{equation*}
Cancellations may occur in $w_n$, therefore knowing the growth of conjugacy classes or elements under iteration of $\alpha$ is not enough to control $\norm{\varphi^n(ab)}$. 
\end{exam}

 This leads us to  define   
 \emph{palangres}\footnote{French for longline, used for fishing, in particular near CIRM} as follows.
 
\begin{defi}[Palangres]
For $\varphi\in\Aut(G)$ and $g\in G$, define the \emph{left} and \emph{right palangres}
\begin{align*}
    L_n(\varphi,g) & = g\varphi(g)\varphi^2(g)\dots \varphi^{n-1}(g), \\
    R_n(\varphi,g) & = \varphi^{n-1}(g)\dots \varphi^2(g)\varphi(g)g.
\end{align*}
A term such as $L_n(\varphi, g)R_n(\varphi,h)$ will be called a  \emph{double palangre}.
\end{defi}

What we really need in the one-ended case is the following result, whose first assertion is just a rewording of \autoref{main}.  

\begin{theo}[see \autoref{res: recap one-ended groups}]\label{palalg}
    Let $G$ be toral relatively hyperbolic and one-ended.   
    Let $\varphi\in\Aut(G)$. 
    \begin{enumerate}
        \item (Classes). For any $g \in G$, there exist $d\in\N$ and $\lambda\geq 1$ such that $\norm{\varphi^n(g)} \asymp n^d\lambda^n$. 
        \item (Palangres). For any $g,h\in G$, there exist $d\in\N$ and $\lambda\geq 1$ such that $\abs{L_n(\varphi,g)R_n(\varphi,h)}  \asymp n^d \lambda^n$.
    \end{enumerate}
\end{theo}

\begin{defi}[\Total]
\label{def: growth dichotomy auto}
We say that $\phi \in \aut G$ has (\emph{algebraic}) \emph{\total} if, for every $k \geq 1$, the automorphism $\phi^k$ satisfies both conclusions (‘Classes' and ‘Palangres') of \autoref{palalg}. 
\end{defi}

\begin{rema}
    In the course of the article it will be convenient to replace $\varphi$ by a power.
    In general, however, the growth of palangres does not behave nicely under this operation.
    This is the reason why the definition of \total\ requires that the conclusions of \autoref{palalg} hold for \emph{every} positive power of $\varphi$.
    As a consequence, for any $k \geq 1$, the automorphism $\varphi$ has \total\ if and only if   $\varphi^k$ does (see \autoref{puiss}).
\end{rema}

The next ingredient is a combination theorem for automorphisms of free products (\autoref{infb0}): essentially, it says that automorphisms of $G$
have PolExp growth whenever 
total PolExp growth 
holds in each free factor $G_i$.
Combined with \autoref{palalg}, this implies our main theorem (\autoref{main}).
 (As we were completing this work, Fioravanti released another    combination theorem for free products  \cite[Proposition~A.11]{Fio},
with different assumptions suitable for virtually special groups.)

\medskip

\autoref{palalg} is also true if $G$ has infinitely many ends (for the second assertion, see the trick mentioned in \autoref{sg}). 
To prove it (for $G$ one-ended), we slightly change our point of view:
instead of considering a single automorphism $\varphi$, we work with the mapping torus
\begin{equation*}
    E=G\rtimes_\varphi \Z= \langle G,t \mid tgt^{-1}=\varphi(g),~\forall g\in G\rangle.
\end{equation*}
Elements of $E$ of the form $gt^k$ with $g\in G$ represent automorphisms of $G$ in the outer class of $\varphi^k$, 
and it turns out that palangres have a natural interpretation in $E$: for 
$\alpha=gt^k$ and $\beta=h^{-1}t^k$ in $E$, one has 
    \[L_n(\varphi^k,g)R_n(\varphi^k,h)=\alpha^n\beta^{-n}.\]

We then consider the universal covering $X$ of a graph of spaces $M$   associated to the  JSJ decomposition, as mentioned above. 
It is a geodesic metric space $(X,\distV)$ on which $G$ acts isometrically and cocompactly.

The geometric version of the `Palangres' assertion of \autoref{palalg} 
is the following result (see \autoref{palgeo2} for a more detailed statement). 
\begin{theo}
\label{palgeo}
Let $G$ be toral relatively hyperbolic and one-ended.
Assume that $G$ acts properly, cocompactly, by isometries on a geodesic metric space $(X,\sf d)$.

Given $\alpha=gt$ and $\beta=h\m t$ in $E=G\rtimes_\varphi\Z$, with $g,h\in G$,  there exist $d\in\N$ and $\lambda\geq 1$ such that
$ \dist x{\alpha^n \beta^{-n} x} \asymp n^d \lambda^n$ for some (hence every) $x\in X$.
\end{theo}

Such estimates hold in vertex spaces, and  we prove a combination theorem that allows to pass from local to global (see \autoref{final}).
This is done by extending the isometric action of $G$ to an action of $E$ by quasi-isometries, using the homeomorphism $f$ of $M$ representing $\varphi$.

\medskip
The paper is organized as follows. 
\autoref{algt} is a preliminary section about growth.
The remainder of the article is divided into three parts.
The first two  cover one-ended groups and infinitely-ended groups respectively.
The last one is devoted to further results, including \autoref{raffi}.

In \autoref{part: one-ended} we first introduce the JSJ decomposition of $G$ that we shall use (\autoref{jsjdec}).
Then we prove in \autoref{sommets} that \total\ holds in vertex groups (rigid, abelian, surface). The real work starts in \autoref{espace}, 
 where we construct the spaces $M$ and $X$ described above; they are used in \autoref{final} to prove a combination theorem leading to \autoref{palalg}.
 
 We begin \autoref{part: infinitely-ended} with \autoref{cts}, where we review completely split train tracks (CT's), as introduced by Feighn-Handel and Lyman \cite{Feighn:2011tt, Lyman:2022wz}. 
 Using the growth of palangres (\autoref{palalg}),
 we   then prove our main theorem (\autoref{main}) also in infinitely-ended groups in \autoref{gfp}. 
 
 Finally, in \autoref{words} we study the growth of sequences $\abs{\varphi^n(g)}$, and in \autoref{hier} we prove \autoref{raffi} and related results.

\paragraph{Acknowledgements.}
We thank Elia Fioravanti, Yassine Guerch, Michael Handel, Robert A.\ Lyman for useful conversations.
We also thank François Laudenbach for mentioning the reference \cite{Fathi:1983dy}.  
The first three authors warmly thank Marie-France and Robert Giraud for welcoming them in their house for invigorating mathematical retreats.

 \section{Algebraic and geometric growth}\label{algt}

In this section we review 
growth under iteration of an (outer) automorphism, as well as its geometric counterpart, and we provide a geometric reformulation of \autoref{palalg} from the introduction (see \autoref{palgeo2} below). 

  \subsection{Algebraic growth and mapping torus}

 Let $G$ be a finitely generated group. We fix a finite generating set and we define $\abs g$ as the word length of $g\in G$. The length of the conjugacy class $[g]$ of $g$, denoted by $\norm g$, is the minimum length of elements conjugate to $g$.
 
 \begin{defi}[Growth type]\label{gtype}
     Let $d\in\N$, and $\lambda\geq 1$. Given $f:\N\to\R$, we write $f(n) \asymp n^d\lambda^n$ if there exists $C>0$ such that
     \begin{equation*}
        \frac 1C n^d\lambda^n-C\leq  f(n) \leq C n^d\lambda^n+C,
        \quad \forall n \geq 1. 
     \end{equation*}
     In this case we say that $f$ has \emph{\PE growth}.
    We write  
    $\preccurlyeq$ and $\succcurlyeq$ instead of  $\asymp$ when only one inequality holds. 
     
     Let $\varphi\in \Aut(G)$. 
     We say that $g$ \emph {grows like} $n^d\lambda^n$, or that $g$ has \emph {growth type} $(d,\lambda)$ under $\varphi$, if $\abs{\varphi^n(g)} \asymp n^d\lambda^n$.
     This does not depend on the choice of a finite generating set for $G$.
     
     Similarly, if $\norm{\varphi^n(g)} \asymp n^d\lambda^n$, this is also true for any $\varphi'\in\Aut(G)$ representing the same outer class $\Phi\in\Out(G)$.
     We then say that $[g]$ \emph {grows like} $n^d\lambda^n$, or has \emph{growth type} $(d,\lambda)$, under $\varphi$ and $\Phi$.
  
     Growth types are ordered in the obvious way, with $(d_1,\lambda_1)\leq (d_2,\lambda_2)$ if $n^{d_1}\lambda_1^n\preccurlyeq  n^{d_2}\lambda_2^n$;
     in other words, $(d_1,\lambda_1) \leq (d_2, \lambda_2)$ if $\lambda_1 < \lambda_2$, or $\lambda_1  = \lambda_2$ and $d_1 \leq d_2$.

 \end{defi}

As explained in the introduction, we   need to study the growth of the double palangres, which have a natural description in terms of the cyclic extension of $G$ associated to the automorphism.

\begin{defi}[Mapping torus]\label{mt}
 
Given $\varphi\in\Aut(G)$ representing $\Phi\in\Out(G)$, we let $E_\varphi$, or simply $E$, be the semi-direct product 
\[E_\varphi = G \rtimes_\varphi \Z = \langle G,t \mid tgt^{-1}=\varphi(g),~\forall g\in G\rangle.\]
We let $\pi:E_\varphi\to \Z$ be the natural homomorphism sending $G$ to 0 and $t$ to 1; in other words,  $\pi(gt^k)=k$ for all $g \in G$ and $k \in \Z$.

\end{defi}

Note that the isomorphism class of $E$ only depends on $\Phi$; in particular, $E\simeq G\times\Z$ if $\varphi$ is inner.

There is a natural homomorphism  from $E$ to $\Aut(G)$, defined by sending the element $\alpha=gt^k$ with $g\in G$ and $k\in\Z$ to the automorphism  $x\mapsto g\varphi^k(x)g\m$. It is injective when $\Phi$ has infinite order and $Z(G)$ is trivial.

This enables us to view elements of $E $ as automorphisms of $G$.
For $k=0$, we get the inner automorphisms $x\mapsto gxg\m$. 
For $k=1$, the elements $gt$ with $g$ varying in $G$  
are precisely the automorphisms representing $\Phi$. More generally,  the elements $gt^k$ are the representatives of $\Phi^k$. 
 
The group law in $E$ is expressed by 
\begin{equation*}
    (gt^n) (ht^m)=g\varphi^n(h)\, t^{n+m}, \quad \forall g,h \in G, \ n,m \in \Z.
\end{equation*}
In particular, palangres appear in 
\begin{equation*}
    (gt)^n
    = g\varphi(g)\dots\varphi^{n-1}(g)t^n
    = L_n(\varphi,g)t^n.
\end{equation*}
Also recall that 
\begin{equation}\label{equation:palangres}
    L_n(\varphi^k,g)R_n(\varphi^k,h)=\alpha^n\beta^{-n}
    \end{equation}
for $\alpha=gt^k$ and $\beta=h^{-1}t^k$ with $g,h \in G $ and $k\in\Z$.

\begin{rema}\label{rema:bete}
A computation shows that, for every $g,h\in G$ and every $n\in\N$, one has $\varphi^n(g)R_n(\varphi,h)=R_n(\varphi,\varphi(g)hg^{-1})g$. It follows 
that studying the growth of double palangres is enough to also study the growth of terms of the form $\varphi^n(g)$ or $L_n(\varphi,u)\varphi^n(g)R_n(\varphi,v)$.
\end{rema}

\subsection{
\Total{} 
 }
 \label{sec: polexp palangre growth}

We now consider growth in a broader geometric context.
Let $G$ be a group acting properly and cocompactly by isometries on a proper geodesic metric space $X$ (e.g.\ a Cayley graph).
The distance between two points $x,x' \in X$ is denoted by $\dist x{x'}$.
If $g \in G$, its \emph{translation length}, denoted by $\norm[X]g$, is
\begin{equation*}
	\norm[X]g = \inf_{x \in X} \dist {gx}x.
\end{equation*}

Fix a base point $x\in X$.
We have $\abs g \asymp \dist x{gx} $ and $\norm g \asymp \norm[X]g$ by the Schwarz-Milnor lemma. This will allow us to go back and forth between algebra and geometry.
 
 The following statement is the geometric version of \autoref{palalg}  (we will show in \autoref{equiv} that the two theorems are equivalent, using Formula~(\ref{equation:palangres})).
  
\begin{theo}[Geometric \total{}]\label{palgeo2} 

 Let $G$ be a one-ended toral relatively hyperbolic group. Let $\varphi\in\Aut(G)$ and $E=G \rtimes_\varphi \Z$. Let $X$ be a proper geodesic metric space on which $G$ acts  isometrically, properly, cocompactly.
 Then:
 
\begin{enumerate}
	\item (Classes). For every  
	$g\in G$, there exist $d\in \N$  and $\lambda\geq 1$ such that
		\begin{equation*}
			 \norm[X]{ \varphi^n(g)}
	\ \asymp n^d \lambda^n.
		\end{equation*}
		
 \item (Palangres).
 For every  $\alpha, \beta \in E$ with $\pi(\alpha)=\pi(\beta)\geq 1$, 
 there exist  $d\in \N$  and $\lambda\geq 1$
 such that 
		\begin{equation*}
			 \dist x{\alpha^n \beta^{-n} x} \asymp n^d \lambda^n
		\end{equation*}
	for some (hence every) $x\in X$.
	
	\end{enumerate}
\end{theo}

\begin{rema}\label{dsG}
 In the second assertion, $\pi(\alpha)=\pi(\beta) $ means that there exist $k\in \Z$ and $g,h\in G$ such that $\alpha=gt^k$ and $\beta=ht^k$.
    This implies that each $\alpha^n \beta^{-n}$ belongs to $G$, hence acts (isometrically) on $X$.
    Therefore, the validity of $ \dist x{\alpha^n \beta^{-n} x} \asymp n^d \lambda^n$ is independent of $x\in X $.
\end{rema}

In the rest of this section, $G$ is any finitely generated group, $\varphi\in\Aut(G)$, and $E=G\rtimes_\varphi \mathbb{Z}$.

\begin{defi}[\Total]
\label{def: growth dichotomy action}

We say that $E$ has
 (\emph{geometric})
 \emph{\total} with respect to $X$ if it satisfies both conclusions of \autoref{palgeo2} (‘Classes' and ‘Palangres').

\end{defi}

Geometric  \total{} does not depend on the choice of $X$:

\begin{lemm}
\label{changing space}\label{puiss0}
    
    Let $X$ and $X'$ be two geodesic metric spaces on which $G$ acts properly, cocompactly, by isometries.
    Then $E$ has \total{} with respect to $X$ if and only if it has \total{} with respect to $X'$.
\end{lemm}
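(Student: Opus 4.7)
The plan is to reduce the statement to the fact that both the displacement $\dist{x}{gx}$ and the translation length $\norm[X]{g}$, viewed as functions of $g \in G$, are equivalent under $\asymp$ to intrinsic quantities in $G$ that do not involve $X$ at all. Specifically, I will use the Schwarz--Milnor lemma to show that $\dist{x}{gx} \asymp \abs g$ (word length) and $\norm[X]{g} \asymp \norm g$ (conjugacy length), with constants independent of $g$. Once this is established for both $X$ and $X'$, the sequences appearing in the ‘Classes' and ‘Palangres' conclusions of \autoref{palgeo2} have the same growth type when computed in $X$ as in $X'$.

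First I would fix base points $x \in X$ and $x' \in X'$. By the Schwarz--Milnor lemma, there exists $C \geq 1$ with
\[
   \tfrac{1}{C} \abs g - C \leq \dist x{gx} \leq C\abs g + C \quad \text{for all } g \in G,
\]
and similarly for $X'$ with some constant $C'$. Composing these two comparisons gives $\dist{x}{gx} \asymp \dist{x'}{gx'}$ uniformly in $g$. Applying this to $g = \alpha^n\beta^{-n} \in G$ (which lies in $G$ thanks to $\pi(\alpha)=\pi(\beta)$, see \autoref{dsG}) immediately transports the ‘Palangres' estimate from one space to the other.

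Next I would treat translation lengths. The upper bound $\norm[X]{g} \leq C\norm g + C$ is straightforward: for any conjugate $hgh\m$ one has $\norm[X]{g} = \norm[X]{hgh\m} \leq \dist x{hgh\m x} \leq C\abs{hgh\m} + C$, and taking the infimum over $h$ gives the bound. For the reverse inequality, cocompactness yields a constant $K$ such that every point of $X$ lies within $K$ of the $G$-orbit of $x$; given $y \in X$ realising $\dist y{gy}$ close to $\norm[X]{g}$, picking $h \in G$ with $\dist y{hx} \leq K$ gives
\[
   \dist y{gy} \geq \dist{hx}{ghx} - 2K = \dist{x}{h\m g h x} - 2K \geq \tfrac 1 C \abs{h\m g h} - C - 2K \geq \tfrac 1 C \norm g - C'.
\]
Thus $\norm[X]{g} \asymp \norm g$ uniformly in $g$, and the same holds for $X'$; hence $\norm[X]{g} \asymp \norm[X']{g}$. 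Applying this to $g = \varphi^n(g_0)$ transports the ‘Classes' estimate.

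The argument is essentially bookkeeping around Schwarz--Milnor; the only point requiring a little care is the lower bound $\norm[X]{g} \succcurlyeq \norm g$, which is where cocompactness is genuinely used (properness alone would not suffice). Once both comparisons are in place, the two conditions defining \total{} become statements about the intrinsic sequences $\norm{\varphi^n(g_0)}$ and $\abs{\alpha^n\beta^{-n}}$, from which independence of the model space is immediate.
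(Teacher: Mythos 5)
Your proof is correct and takes essentially the same approach as the paper: both hinge on Schwarz–Milnor coarse equivariance, the only (cosmetic) difference being that you factor through the Cayley graph as an intermediary, while the paper directly produces a coarsely $G$-equivariant quasi-isometry $X\to X'$ and observes that the relevant quantities are preserved since $\alpha^n\beta^{-n}$ acts by isometries.
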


\begin{proof}
The action of $G$ being proper and cocompact, there is a  quasi-isometry $H \colon X \to X'$ which is coarsely $G$-equivariant: there exists $C \geq 0$ such that 
\begin{equation*}
    \dist{gH(x)}{H(gx)} \leq C, \quad \forall g \in G,\ \forall x \in X.
\end{equation*}
As we observed in \autoref{dsG}, the element 
$\alpha^n \beta^{-n}$ belongs to $G$ for every $n \in \N$ and $\alpha, \beta \in E$ with $\pi(\alpha) = \pi(\beta)$. Hence all the asymptotic estimates are the same in $X$ and $X'$ up to a multiplicative/additive error bounded in terms of $C$ and the quasi-isometry parameters of $H$ only; in particular, the growth types may be computed in either space.
\end{proof}

Using a Cayley graph of $G$, we deduce:

 \begin{lemm}[Algebraic and geometric \total{} are equivalent]\label{equiv}
Let $X$ be a proper geodesic metric space on which $G$ acts  properly, cocompactly, by isometries.
Then $\phi$ has \total\ (\autoref{def: growth dichotomy auto}) if and only if $E$ has \total{} with respect to $X$ (\autoref{def: growth dichotomy action}).
\end{lemm}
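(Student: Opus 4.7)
The plan is to reduce to the Cayley graph of $G$ and then literally match the two definitions using the Schwarz--Milnor lemma and Formula~(\ref{equation:palangres}). First I would invoke \autoref{puiss0} to choose $X$ to be a locally finite Cayley graph of $G$. The Schwarz--Milnor lemma then furnishes the standard comparisons $\abs g \asymp \dist 1 g$ and $\norm g \asymp \norm[X] g$ for every $g \in G$, so \PE growth types computed algebraically in $G$ and geometrically in $X$ coincide up to the additive/multiplicative constants absorbed into the $\asymp$ notation.

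For the ``Classes'' conditions, the geometric requirement $\norm[X]{\varphi^n(g)} \asymp n^d \lambda^n$ is, via the above comparison, precisely the $k=1$ instance of algebraic Classes (\autoref{palalg}(1) applied to $\varphi$). The algebraic definition of \total\ additionally demands the same statement for each power $\varphi^k$, but the trivial substitution $m = kn$ in $\norm{\varphi^m(g)} \asymp m^d \lambda^m$ yields $\norm{(\varphi^k)^n(g)} \asymp n^d (\lambda^k)^n$, a growth of the same \PE form. So the $k=1$ case automatically implies all $k \geq 1$, and the two Classes conditions match.

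For the ``Palangres'' conditions I would invoke Formula~(\ref{equation:palangres}): taking $\alpha = gt^k$ and $\beta = h^{-1} t^k$, one has $\alpha^n \beta^{-n} = L_n(\varphi^k, g) R_n(\varphi^k, h) \in G$. The assignment $(k, g, h) \mapsto (\alpha, \beta)$ is a bijection between triples with $k \geq 1$, $g, h \in G$ and pairs $(\alpha, \beta) \in E^2$ satisfying $\pi(\alpha) = \pi(\beta) = k \geq 1$. Applying Schwarz--Milnor once more to $\alpha^n \beta^{-n} \in G$ gives $\dist x{\alpha^n \beta^{-n} x} \asymp \abs{L_n(\varphi^k, g) R_n(\varphi^k, h)}$, so the two Palangres conditions correspond term by term under this bijection. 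There is no substantive obstacle: the lemma is essentially a dictionary between the algebraic and geometric formulations, and the only point requiring minor care is aligning the quantifier ``for every $k \geq 1$'' in the algebraic definition with the range $\pi(\alpha) = \pi(\beta) \geq 1$ on the geometric side.
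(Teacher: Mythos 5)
Your proposal is correct and takes essentially the same approach as the paper: reduce to the Cayley graph via \autoref{puiss0} and match the Palangres conditions via Formula~(\ref{equation:palangres}). You also spell out, more explicitly than the paper does, the substitution $m = kn$ showing that the geometric Classes condition (stated only for $\varphi$) already implies the algebraic Classes condition for every power $\varphi^k$, which is indeed the small quantifier gap that needs noting.
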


\begin{proof}
  By \autoref{puiss0}, we may assume that $X$ is the Cayley graph of $G$ with respect to the generating set used to define $  \abs g$, with $x=1_G$,
 so that $\dist x{gx}=\abs g$ and $\norm g_X=\norm g$. 
  The equivalence of the assertions about classes  follows. 
  For the assertions about palangres, we use Formula~(\ref{equation:palangres}).
\end{proof}

\begin{rema}\label{dout}
It follows from the geometric viewpoint that the validity of \autoref{palalg} for $\varphi$ only depends on its outer class (this may also be seen directly by a simple computation).
\end{rema}

We end this section with a key lemma, which will allow us to  replace $\varphi$ by a power (and thus 
assume that it is pure in the sense of \autoref{rtl} below).

 \begin{lemm}\label{puissok}\label{puiss2}\label{puiss}
  Let $k\geq1$.
 \begin{enumerate}
     \item An element or conjugacy class in $G$ has growth type $(d,\lambda)$ under iteration of $\varphi$ if and only if it has growth type $(d,\lambda^k)$ under iteration of $\varphi^k$;
     \item $\varphi^k$ has \total\ if and only if $\varphi$ does.
 \end{enumerate}
\end{lemm}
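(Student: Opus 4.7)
For assertion (1), the key observation is that every automorphism $\psi$ of $G$ is bi-Lipschitz with respect to both the word length $\abs{\cdot}$ and the conjugacy length $\norm{\cdot}$, with constants depending only on $\psi$ and a fixed finite generating set of $G$. The ‘only if' direction follows by restricting the estimate $\abs{\varphi^n(g)} \asymp n^d \lambda^n$ to the subsequence $n = km$ and noting that $(km)^d \lambda^{km} \asymp m^d (\lambda^k)^m$. For the converse, given $\abs{(\varphi^k)^m(g)} \asymp m^d (\lambda^k)^m$, write $n = km + r$ with $0 \leq r < k$; the identity $\varphi^n(g) = \varphi^r\big((\varphi^k)^m(g)\big)$ combined with the bi-Lipschitz bounds on $\varphi^0, \ldots, \varphi^{k-1}$ (a finite collection) gives $\abs{\varphi^n(g)} \asymp \abs{(\varphi^k)^m(g)} \asymp n^d \lambda^n$. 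The argument for $\norm{\cdot}$ is identical, using that $\psi$ maps conjugacy classes to conjugacy classes.

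For assertion (2), one direction is immediate from the definition of \total: if every positive power of $\varphi$ satisfies (Classes) and (Palangres), then in particular every positive power of $\varphi^k$ does. For the converse, assume $\varphi^k$ has \total. Given $\ell \geq 1$, the automorphism $(\varphi^\ell)^k = (\varphi^k)^\ell$ still satisfies both conclusions by \total\ of $\varphi^k$, so it suffices to show: if $\psi^k$ satisfies (Classes) and (Palangres), then so does $\psi$. The (Classes) part is an immediate consequence of assertion (1).

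For the (Palangres) part, regrouping the factors of $L_n(\psi,g)$ and $R_n(\psi,h)$ into blocks of length $k$ yields the combinatorial identities
\[L_{km}(\psi, g) = L_m\big(\psi^k,\, L_k(\psi, g)\big), \qquad R_{km}(\psi, h) = R_m\big(\psi^k,\, R_k(\psi, h)\big),\]
and, for $n = km + r$ with $0 \leq r < k$,
\[L_n(\psi, g) R_n(\psi, h) = L_r(\psi, g) \cdot \psi^r\big(L_{km}(\psi, g) R_{km}(\psi, h)\big) \cdot R_r(\psi, h).\]
Applying (Palangres) for $\psi^k$ at the pair $\big(L_k(\psi, g),\, R_k(\psi, h)\big)$ yields $\abs{L_{km}(\psi, g) R_{km}(\psi, h)} \asymp m^{d'} \mu^m$ for some $d' \in \N$ and $\mu \geq 1$, which is the desired estimate along the subsequence $n = km$. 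For general $n$, the boundary terms $L_r(\psi,g), R_r(\psi,h)$ take values in a finite set (as $r < k$ and $g, h$ are fixed), and $\psi^r$ has uniformly bounded bi-Lipschitz constants for $0 \leq r < k$; the triangle inequality then absorbs these contributions into the $\asymp$ relation and produces $\abs{L_n(\psi, g) R_n(\psi, h)} \asymp n^{d'} (\mu^{1/k})^n$ for all $n$. The main obstacle is simply checking the two combinatorial identities carefully; once they are in place, the rest is routine Lipschitz bookkeeping.
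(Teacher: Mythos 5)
Your proof is correct. The underlying strategy matches the paper's — Euclidean division, a bounded remainder correction, and the observation that one direction is immediate from the definition — but you execute the `Palangres' part differently, and more self-containedly. You establish and use the combinatorial identities $L_{km}(\psi,g)=L_m\bigl(\psi^k,L_k(\psi,g)\bigr)$ and $R_{km}(\psi,h)=R_m\bigl(\psi^k,R_k(\psi,h)\bigr)$ together with the remainder decomposition $L_n(\psi,g)R_n(\psi,h)=L_r(\psi,g)\,\psi^r\bigl(L_{km}(\psi,g)R_{km}(\psi,h)\bigr)\,R_r(\psi,h)$, whereas the paper passes to the geometric side via \autoref{equiv}: it views $E_{\varphi^k}=\pi^{-1}(k\Z)$ as a finite-index subgroup of $E_\varphi$ acting on a Cayley graph, notes that $\alpha^k,\beta^k\in E_{\varphi^k}$ for any $\alpha,\beta\in E_\varphi$, and derives $\dist x{\alpha^{n+1}\beta^{-(n+1)}x}\asymp\dist x{\alpha^n\beta^{-n}x}$ before invoking Euclidean division. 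The two arguments are equivalent in substance: your block identities are precisely the translation, through formula~(\ref{equation:palangres}), of $\alpha^{km}=(\alpha^k)^m$ with $\alpha=gt$ and $\alpha^k=L_k(\psi,g)t^k$. Your version is more elementary and avoids the geometric machinery entirely, at the cost of verifying the identities by hand; the paper's is more compact because it leverages the mapping-torus/Cayley-graph formalism that is already set up and reused throughout. One small point worth making explicit in your write-up: your reduction to the claim ``$\psi^k$ satisfies (Classes) and (Palangres) $\Rightarrow$ $\psi$ does'' must be applied with $\psi=\varphi^\ell$ for each $\ell\geq 1$ separately, which you do correctly by observing $(\varphi^\ell)^k=(\varphi^k)^\ell$.
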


\begin{proof}
Writing $\phi^n = \phi^i \circ \phi^{km}$ with $0 \leq i < k$ shows the first assertion, which covers in particular the `Classes' part of \total.
 To handle the `Palangres' part, we argue geometrically, using
the action of $G$ on  a Cayley graph $X$. 

We have to estimate $\dist {x}{\alpha^n \beta^{-n} x }$, where we choose $x$ to be a vertex of $X$.
Note that the action of $\aut G$ on $G$ induces an action   of $E_\varphi$ on the vertex set of $X$ by quasi-isometries.

We view  $E_{\varphi^k}=G\rtimes_{\varphi^k}\mathbb{Z}$  as a finite-index subgroup of $E_\varphi$, equal to $\pi^{-1}(k\Z)$. 
 If 
 $E_\varphi$ has \total{} with respect to $X$, so does 
 $E_{\varphi^k}$. 
 This shows the ``if'' direction of the lemma.
  We   now prove the converse. 

Since $\alpha^{n +1} \beta^{-(n+1)}$ acts on the vertex set of $X$ as an isometry and $\alpha$ as a quasi-isometry, we have 
\begin{equation*}
    \begin{split}
        \dist x{\alpha^{n +1} \beta^{-(n+1)} x} 
		\asymp \dist {\alpha x}{\alpha^{n +1}\beta^{-(n+1)} \beta x}
		& =\dist {\alpha  x}{\alpha  \alpha^{n} \beta^{-n}x } \\
        & \asymp \dist {x}{\alpha^n \beta^{-n} x }.
    \end{split}
\end{equation*}
Using Euclidean division, we see that the `Palangres’ part of \total\ is true for    
 $\alpha$ and $ \beta$ if it is true for 
 $\alpha^k, \beta^k$. The ``only if'' direction of the lemma      follows because $\alpha^k,\beta^k\in E_{\varphi^k}$ for any $\alpha,\beta\in E_{\varphi}$.
\end{proof}

\begin{rema}\label{rk: growth-type-power}
For reference in \autoref{ee}, we note that, if $\dist x {\alpha^{kn}\beta^{-kn}x}\asymp n^d\lambda^{n}$, then $\dist x {\alpha^{n}\beta^{-n}x}
 \asymp n^d(\lambda^{1/k})^n$.
\end{rema}
 
 \part{One-ended groups}\label{part: one-ended}

\section{The refined JSJ decomposition}
\label{jsjdec}
 
Let $G$ be toral relatively hyperbolic and one-ended. Recall that $G$  acts on its \emph{canonical JSJ tree} $T_0$, which is the unique (up to equivariant isomorphism) JSJ tree of $G$ over abelian groups, relative to non-cyclic abelian subgroups, equal to its own tree of cylinders, see \cite[Corollary~9.20]{Guirardel:2017te}. 
It enjoys the following properties (we denote by $G_v$ the stabilizer of a vertex $v$, by $G_e$ the stabilizer of an edge $e$).

\begin{prop}[JSJ tree]\label{pjsj2}
Let $G$ be a one-ended toral relatively hyperbolic group, and let $T_0$ be its canonical JSJ tree, with vertex set $V(T_0)$. Then: 
\begin{enumerate}
\item $T_0$ is bipartite:
$V(T_0)$ admits a (unique)  $G$-invariant partition  $V(T_0)=V_0\sqcup V_1$ such that  stabilizers of   vertices in $V_0$ are non-abelian, while   stabilizers of vertices in $V_1$ are abelian (we say that vertices are non-abelian or abelian accordingly); each edge joins a vertex in $V_0$ to a vertex in $V_1$;
\item   if $v\in V_0$,  stabilizers of incident edges are maximal abelian subgroups of $G_v$, and two such stabilizers are conjugate in $G_v$ if and only if  
the edges are in the same $G_v$-orbit;
\item vertex and edge stabilizers   of  
$T_0$ are relatively quasiconvex (see for instance Section 3.3 of \cite{Guirardel:2015fi}),  in particular they are toral relatively hyperbolic;
\item   $T_0$ is 3-acylindrical: segments of length 3 have trivial stabilizer; 
\item   $T_0$ is invariant under automorphisms: the  action of $G$ on $T_0$ extends to an action of $G\rtimes \Aut(G)$ preserving the partition of $V(T_0)$.  \qed
\end{enumerate}
\end{prop}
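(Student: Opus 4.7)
My plan is to realize $T_0$ as the tree of cylinders of a standard abelian JSJ decomposition of $G$ and to read off all five properties from this description, relying on the machinery of \cite{Guirardel:2017te}. Concretely, I would first invoke Guirardel-Levitt's existence of an abelian JSJ tree $T_{\mathrm{JSJ}}$ of $G$ relative to the non-cyclic abelian subgroups, then put on its abelian edge stabilizers the commutation relation ($A \sim B$ iff $A$ and $B$ lie in a common abelian subgroup), which is admissible in the Guirardel-Levitt sense, and let $T_0$ be the resulting tree of cylinders. By \cite[Corollary~9.20]{Guirardel:2017te} this is exactly the canonical $T_0$ of the statement, so only the five properties remain to be checked.

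Properties (1) and (2) are then built into the construction. Vertices of $T_0$ are either \emph{cylinders}, each stabilized by the unique maximal abelian subgroup of $G$ containing the edge stabilizers of that cylinder (these form $V_1$), or vertices of $T_{\mathrm{JSJ}}$ lying in at least two cylinders, which must be non-abelian since abelian vertices of $T_{\mathrm{JSJ}}$ are absorbed into their unique cylinder (these form $V_0$). Each edge of $T_0$ goes from a non-abelian vertex $v$ to a cylinder $c$ containing it; its stabilizer is $G_v \cap G_c$, a maximal abelian subgroup of $G_v$, and two such edges at $v$ are in the same $G_v$-orbit precisely when their stabilizers are $G_v$-conjugate. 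Property (3) follows from the relative quasiconvexity of vertex and edge stabilizers of the abelian JSJ in the toral relatively hyperbolic setting \cite{Guirardel:2015fi}, together with the observation that the new cylinder vertex groups are maximal abelian subgroups of $G$, themselves relatively quasiconvex. Property (5) is immediate from canonicity: the construction of $T_{\mathrm{JSJ}}$ and of the commutation relation are $\Aut(G)$-equivariant, and the bipartition is preserved because it is characterized algebraically (abelian versus non-abelian stabilizer).

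The only real computation is (4). A length-$3$ segment $v_0\, v_1\, v_2\, v_3$ is, by bipartiteness, of the form $V_0 V_1 V_0 V_1$ or its reverse, so at least one of the pairs $\{v_0, v_2\}$, $\{v_1, v_3\}$ lies in $V_1$. An element pointwise stabilizing the segment then lies in the intersection of two stabilizers of distinct $V_1$-vertices, that is, in the intersection of two distinct maximal abelian subgroups of $G$; since toral relatively hyperbolic groups are CSA (torsion-free with malnormal maximal abelians), this intersection is trivial. I expect the main subtlety to lie in verifying admissibility of the commutation relation and checking that abelian vertex groups of $T_{\mathrm{JSJ}}$ are really swallowed by cylinders, so that $V_0$ consists only of non-abelian stabilizers; this in turn uses one-endedness of $G$ to rule out degenerate JSJ splittings where an abelian vertex group would survive with valence one in $T_0$.
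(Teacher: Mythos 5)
The paper does not give a proof of this proposition: it is stated with a \qed and rests entirely on the citation to \cite[Corollary~9.20]{Guirardel:2017te} in the sentence preceding it. Your plan is an expansion of exactly what that reference establishes (take the abelian JSJ relative to non-cyclic abelians, form the tree of cylinders for the commutation relation, and read off the properties), so it is the same approach, not a different one.

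Your argument for (4) has one small gap that is worth making explicit. You write that the stabilizer of the length-$3$ segment ``lies in the intersection of two stabilizers of distinct $V_1$-vertices, that is, in the intersection of two distinct maximal abelian subgroups of $G$.'' This silently uses two facts: first, that $V_1$-vertex stabilizers are not merely abelian (as the proposition states) but maximal abelian in $G$; and second, that distinct $V_1$-vertices never share a stabilizer. Both are true but need a word. The first follows because, in a CSA group, the setwise stabilizer of a cylinder for the commutation relation coincides with the unique maximal abelian subgroup containing the edge groups of that cylinder (the normalizer of an abelian subgroup is contained in, and here contains, that maximal abelian). The second then follows from malnormality together with property (2): if $v_1\ne v_3$ in $V_1$ had the same stabilizer $A$, then $A$ would fix the whole path $v_1 v_2 v_3$ through the $V_0$-vertex $v_2$, so the two incident edge groups at $v_2$ would both equal $A\cap G_{v_2}$; by (2) these edges are in the same $G_{v_2}$-orbit, and the element of $G_{v_2}$ carrying one to the other would normalize $A$, hence lie in $A$, hence fix $v_1$, forcing $v_1=v_3$. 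Once these two points are in place, your CSA argument closes (4) as intended.
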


As   in \cite[Corollary~9.20]{Guirardel:2017te}, every vertex $v\in V_0$ is of one of the following two types: 
\begin{itemize}
    \item \emph{rigid}, i.e.\ $G_v$ does not admit any splitting over abelian subgroups relative to the incident edge stabilizers -- which implies that only finitely many elements of $\Out(G_v)$ extend to automorphisms of $G$ (this relies on the Bestvina--Paulin method and Rips's work on $\R$-trees, see \cite{Guirardel:2015fi});
\item \emph{quadratically hanging} (QH), i.e.\ there exists an isomorphism  $G_v\simeq \pi_1(\Sigma_v)$, where $\Sigma_v$ is a compact (possibly non-orientable) hyperbolic  surface,    and     this isomorphism induces a bijection between stabilizers of incident edges and conjugates of   maximal boundary subgroups of  $\pi_1(\Sigma_v)$.
\end{itemize}

Let now $\varphi\in\Aut(G)$, and let $E=G\rtimes_\varphi\Z$. The tree $T_0$ is $\varphi$-invariant, so the action of $G$ on $T_0$ extends to an action of $E$. 
We refine $T_0$ at QH vertices as in \cite{Bestvina:2023fa, Dahmani:2023re}, so as to get a $\varphi$-invariant $G$-tree $T$ for which the action of $\varphi$ on vertex stabilizers is  either pseudo-Anosov (pA-vertex) or trivial in the following sense (R-vertex).

\begin{defi}[Acting trivially]\label{de:trivial}
    We say that   $\varphi\in\Aut(G)$ \emph{acts trivially} on a subgroup $H$ if there exists an inner automorphism $\iota $ of $G$ such that $\varphi$ agrees with $ \iota $ on $H$.
    In other words, there is an automorphism in the outer class of $\varphi$ which is equal to the identity on $H$.
\end{defi}

We sketch the construction of $T$.  We first
 raise $\varphi$ to a power $\psi=\varphi^N$ (with $N\geq 1$) so that  $\psi$ acts (through $E$) as the identity on the (finite) quotient graph $T_0/G$, and acts trivially on each rigid vertex group $G_v$ (as mentioned  above,  automorphisms of $G_v$ induced by automorphisms of  $G$ have finite order in $\Out(G_v)$). Moreover,  if  $\Sigma_v$ is a surface associated to a QH-vertex and  $f_v$ is a homeomorphism representing the restriction of $\psi$, we want  the complementary subsurfaces to the canonical reduction system for $f_v$ to be invariant under $f_v$, and the induced map to be isotopic to the identity or a pseudo-Anosov homeomorphism (see for instance \cite{Farb:2012ws} for the definitions).  
We may also assume that $\psi$ acts trivially on all edge stabilizers, since edges with non-cyclic stabilizer have a rigid endpoint. Note that $N$ may be bounded in terms of $G$ only.

We define the \emph{refined JSJ tree} $T$ (for $\varphi$) by $G$-equivariantly refining $T_0$ at each QH vertex $v$, using the   cyclic splitting of  $G_v=\pi_1(\Sigma_v)$ dual to the canonical  
reduction system  on $\Sigma_v$.
We then subdivide each of the newly added edges at its midpoint, so that the tree remains bipartite.
The $G$-tree $T$ obtained in this way is not invariant under the whole of $\Aut(G)$, but it is invariant under $\varphi$, so $E$ acts on it. 
We thus get:

\begin{prop}[Refined JSJ tree for $\varphi$]\label {pjsj3}
Let $G$ be a one-ended toral relatively hyperbolic group, let $\varphi\in\Aut(G)$, and let $T$ be the refined JSJ tree for $\varphi$  constructed above. It enjoys the first four properties listed in \autoref{pjsj2}, and the  action of $G$ on $T$ extends to an action of $E=G\rtimes_\varphi\Z$ preserving the partition of $V(T)$. 

Moreover, there is a power $\psi=\varphi^N$ (with $N$  bounded in terms of $G$ only)  such that:  
	\begin{enumerate}
 \item $\psi$ acts as the identity on the (finite) quotient graph $T/G$; 
\item the action of $\psi$ on   every edge stabilizer of $T$ is trivial (see \autoref{de:trivial});	
\item  vertices $v \in V_0$ are either R-vertices or pA-vertices, in the following sense: 
\begin{itemize}

\item If $v$ is an R-vertex, $\psi$ acts trivially on $G_v$.
\item If $v$ is a pA-vertex, it is a QH vertex and $\psi$ acts on $G_v$ as a pseudo-Anosov homeomorphism of   the  compact surface $\Sigma_v$.
 \qed
\end{itemize}
\end{enumerate}
\end{prop}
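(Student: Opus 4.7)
The plan is to first construct the bounded power $\psi = \varphi^N$, then refine $T_0$ equivariantly at QH vertices using the canonical reduction systems, and finally subdivide to restore bipartiteness. The construction is essentially sketched in the excerpt; the work is in establishing the uniform bound on $N$ and verifying that the refined tree inherits the four structural properties of $T_0$.

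For $\psi$, I combine several bounded quantities. Since the quotient graph $T_0/G$ is finite with size bounded in terms of $G$, a bounded power of $\varphi$ fixes $T_0/G$ pointwise. For each rigid vertex $v$, the subgroup of $\Out(G_v)$ consisting of outer classes that extend to automorphisms of $G$ is finite with cardinality bounded in terms of $G$, via the Bestvina--Paulin method and Rips's $\R$-tree theorem (as recorded in \cite{Guirardel:2015fi}); a further bounded power makes $\varphi$ act trivially on each rigid $G_v$ in the sense of \autoref{de:trivial}. For each QH vertex $v$, represent the induced action on $G_v = \pi_1(\Sigma_v)$ by a homeomorphism $f_v$ and use the Fathi-Laudenbach-Poénaru canonical reduction system \cite{Fathi:1983dy}, which is $f_v$-invariant up to isotopy; raising to a further bounded power permutes the reduction curves trivially, eliminates Dehn-twist freedom along each curve, and makes the restriction to each complementary piece isotopic to either the identity or a pseudo-Anosov homeomorphism. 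Because non-cyclic abelian edge stabilizers of $T_0$ necessarily have a rigid endpoint (QH surface groups contain no non-cyclic abelian subgroups), triviality on these edge stabilizers comes for free from the rigid case; cyclic edge stabilizers are handled by one last bounded power. Taking $N$ to be the product of these bounded exponents produces the desired $\psi$.

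For the refinement, at each QH vertex $v$ the canonical reduction system is disjoint from $\partial \Sigma_v$, so the dual cyclic splitting of $G_v$ is relative to the peripheral, i.e.\ incident edge, subgroups; replacing each $G$-orbit of QH vertex by the Bass-Serre tree of this relative splitting yields a $G$-tree refining $T_0$. I then subdivide every newly created edge at its midpoint and place the midpoint in $V_1$ to keep $T$ bipartite. Since $\psi$ preserves the canonical reduction system, $E = G \rtimes_\psi \Z$ acts on $T$, with R-vertices being the rigid vertices of $T_0$ together with the new vertices on whose piece $\psi$ acts trivially, and pA-vertices the remaining new vertices. The first four properties of \autoref{pjsj2} transfer to $T$: bipartiteness by construction, maximality of incident edge stabilizers at non-abelian vertices from the corresponding statement inside the surface, relative quasiconvexity from the transitivity of relative quasiconvexity together with the quasiconvexity of curve and subsurface subgroups in $\pi_1(\Sigma_v)$, and 3-acylindricity from the 3-acylindricity of $T_0$ and the malnormality of peripheral and curve subgroups in each $\pi_1(\Sigma_v)$. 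The main obstacle is verifying 3-acylindricity at the interface between the $T_0$-part and the refinement: a stabilizer of a length-3 segment mixing old and new edges would have to fix simultaneously an edge stabilizer of $T_0$ sitting inside a peripheral subgroup of $\Sigma_v$ and a curve subgroup of $G_v$, which forces triviality since the reduction curves are essential and their stabilizers intersect peripheral subgroups trivially.
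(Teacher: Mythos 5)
Your proposal is correct and follows essentially the same route as the paper, which in fact gives only the brief construction sketch preceding the proposition and ends it with \verb|\qed|. The one imprecision worth flagging: you write that $E=G\rtimes_\psi\Z$ acts on $T$, but the proposition asserts that $E=G\rtimes_\varphi\Z$ does; this is fine because the canonical reduction system of a mapping class equals that of every nonzero power, so the refined tree is already $\varphi$-invariant (not merely $\psi$-invariant), as the paper explicitly observes.
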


\begin{rema}\label{access}
    Note that the number  of vertices of $T/G$ and the complexity of the surfaces $\Sigma_v$ appearing in the QH-vertices of $T$ are bounded independently of $\varphi$, in terms of the same quantities for $T_0$.
\end{rema}

\begin{rema}\label{inner} 
If $v$ is an R-vertex, then $G_v$ is non-abelian, so fixes a single point of $T$ and is self-normalizing. It follows that, if an automorphism in the outer class of $\varphi$ leaves $G_v$ invariant, then its restriction to $G_v$ is an inner automorphism of $G_v$. 
\end{rema}

\begin{defi}[Pure]\label{rtl}
  We say that   $\varphi\in\Aut(G)$  is \emph{\rotationless } if  \autoref {pjsj3} applies with $\psi=\varphi$ (i.e.\ $N=1$); in particular, $\varphi$ acts trivially on the quotient graph and on stabilizers of edges and R-vertices.
\end{defi}

The goal of the next three sections is to prove \total. 
By \autoref{puissok}, we may restrict to \rotationless\ automorphisms. 

\section{Palangres in vertex groups} \label{sommets}

We prove \total{} for automorphisms appearing  as restrictions to vertex stabilizers of the refined JSJ tree $T$, using the algebraic (\autoref{palalg}) or geometric (\autoref{palgeo2}) version; recall that these stabilizers are toral relatively hyperbolic. 
For convenience, in this section, $G$ denotes such a group and $\varphi$ an automorphism of $G$.

There are three cases: R-vertices, abelian vertices, pA-vertices.

\subsection{R-vertices}\label{rv}

Using \autoref{inner}, we may assume that
  $\phi$ is inner.
 In this case $E_\varphi\simeq G\times \Z$, and  we may assume that $\varphi$ is the identity. \Total\ reduces to computing the growth of sequences of the form $\abs{g^nh^n}$, with $g,h\in G$.

For $G$ toral relatively hyperbolic, such a sequence  is either bounded or grows linearly (because this is true in the parabolics, which are free abelian).   
As a consequence, 
\total{} holds, with only bounded and linear growth occurring.

\subsection{Abelian groups}\label{abg}

We suppose that $G$ is abelian, so $G\simeq\Z^k$.
We view $\varphi\in\Aut(G)$ as a matrix $A\in GL(k,\Z)$ acting on $\C^k$ and we use additive notation.
We compute the growth of $\norm{A^nv}$ and $\norm{(I+A+\dots+A^{n-1})v}$  for $v\in\C^k$, with $\normV$ a suitable Hermitian norm.
The proof is linear algebra, we give it for completeness.

\begin{prop}
For any $A\in GL(k,\Z)$ and $v\in\C^k$, there exist an eigenvalue $\eta$ of $A$ and an integer $d   \leq k-1$,  such that 
\begin{itemize}
    \item $\norm{A^nv} \asymp n^d |\eta |  ^n$, and
    \item $ \norm{(I+A+\dots+A^{n-1})v} \asymp n^{d'} |\eta |  ^n$, where $d'=d$ if $\eta\neq 1$, and $d'=d+1$ if $\eta=1$.
\end{itemize}
In particular, automorphisms of $\Z^k$ have 
\total{}.
\end{prop}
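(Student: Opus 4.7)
This is a linear algebra statement I would prove via Jordan decomposition. The substantive issue is not the individual estimates, which are routine, but the requirement that a \emph{single} choice of $\eta$ govern the asymptotics of both $A^n v$ and $B_n v := (I+A+\dots+A^{n-1})v$.

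First I would fix a Hermitian inner product on $\C^k$ making the generalized eigenspaces $V_\eta$ of $A$ pairwise orthogonal, and decompose $v=\sum_\eta v_\eta$. Orthogonality yields
\[
\norm{A^n v}^2 = \sum_\eta \norm{A^n v_\eta}^2 \qquad\text{and}\qquad \norm{B_n v}^2 = \sum_\eta \norm{B_n v_\eta}^2,
\]
so each of $\norm{A^n v}$ and $\norm{B_n v}$ is $\asymp$ the largest block contribution.

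Next I would estimate each block. On $V_\eta$ write $A=\eta I+N$ with $N$ nilpotent (and $\eta\neq 0$ since $A\in GL(k,\Z)$), and let $d_\eta$ be the largest integer with $N^{d_\eta}v_\eta\neq 0$; then $d_\eta\leq \dim V_\eta -1\leq k-1$. The expansion
\[
A^n v_\eta = \sum_{j=0}^{d_\eta} \binom{n}{j}\eta^{n-j}\,N^j v_\eta
\]
gives $\norm{A^n v_\eta}\asymp n^{d_\eta}|\eta|^n$, the top-degree term being dominant. For $B_n v_\eta$: when $\eta\neq 1$, $A-I$ is invertible on the finite-dimensional $V_\eta$, and from $B_n(A-I)=A^n-I$ one gets $\norm{B_n v_\eta}\asymp\norm{A^n v_\eta}\asymp n^{d_\eta}|\eta|^n$; when $\eta=1$, the hockey-stick identity $\sum_{m=0}^{n-1}\binom{m}{j}=\binom{n}{j+1}$ gives $B_n v_1=\sum_{j=0}^{d_1}\binom{n}{j+1}N^j v_1$, so $\norm{B_n v_1}\asymp n^{d_1+1}$.

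The last step is the case analysis selecting $\eta$. Set $\lambda=\max\{|\eta|:v_\eta\neq 0\}$. If $\lambda>1$ I pick $\eta$ with $|\eta|=\lambda$ maximizing $d_\eta$; such $\eta$ is necessarily $\neq 1$, dominates both sums with exponent $d=d_\eta$, and gives $d'=d$. If $\lambda=1$, I set $d^\ast=\max\{d_\eta:|\eta|=1,\,\eta\neq 1,\,v_\eta\neq 0\}$ (with $d^\ast=-\infty$ if empty) and compare with $d_{v_1}$ (set to $-\infty$ if $1$ is not an eigenvalue of $A$ or $v_1=0$): if $d_{v_1}\geq d^\ast$ take $\eta=1$, $d=d_{v_1}$, $d'=d+1$; otherwise take $\eta\neq 1$ with $|\eta|=1$ and $d_\eta=d^\ast$, so $d'=d$. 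In every subcase the block estimates plus orthogonality show that this single $\eta$ realizes both asymptotics, and $d\leq k-1$.

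The main (minor) obstacle is precisely this matching when $\lambda=1$: the extra factor of $n$ picked up by the $\eta=1$ summand in $B_n v$ could a priori shift the dominant eigenvalue between the two sums, but the subcase dichotomy resolves it—either $\eta=1$ already dominated $A^n v$ (so the prescribed offset $d'=d+1$ is correct), or some modulus-one $\eta\neq 1$ still dominates $B_n v$ as well (so $d'=d$).
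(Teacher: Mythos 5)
Your proof is correct and follows the same route as the paper's: reduce to Jordan blocks via an orthogonal decomposition, expand $A^n$ binomially to see $\norm{A^n v_\eta}\asymp n^{d_\eta}|\eta|^n$, and handle $B_n=(I+A+\dots+A^{n-1})$ by inverting $A-I$ when $\eta\neq 1$ and by the hockey-stick identity when $\eta=1$. The only real difference is that you spell out the concluding case analysis that pins down a single $\eta$ governing both asymptotics simultaneously, a step the paper leaves to the reader after the remark ``the estimate follows.''
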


\begin{rema} \label{unit}
 Note that $\eta$ is an algebraic unit of degree at most $k$, and $|\eta|$ is a unit of degree at most $2k^2$.
\end{rema}

\begin{proof}
By suitably choosing a basis of $\mathbb{C}^k$, we can assume that $A$ is upper block-triangular, with Jordan blocks of the form $\eta_p I_p+N_p$, where $\eta_p$ is an eigenvalue of $A$ and $N_p$ is a nilpotent matrix of size $p$.
We fix a Hermitian norm for which this basis is orthonormal.

It is enough to understand the growth when $A=\eta I+N$. For every $n\geq k$, we have \[ A^n = (\eta I+N)^n = \sum_{\ell=0}^n {n\choose \ell}\eta^{n-\ell}N^\ell = \sum_{\ell=0}^{k-1} {n\choose \ell}\eta^{n-\ell}N^\ell\]
since $N^\ell=0$ for $\ell\geq k$.  
Thus the entries of $A^n$ are of the form $P_{ij}(n)\eta^n$, and those of $A^nv$   of the form $P_{i,v}(n)\eta^n$, 
where each $P_{ij}$, $P_{i,v}$ is a polynomial of degree at most $k-1$. Therefore \[{\norm{A^nv}}^2=\sum_{i=1}^k|P_{i,v}(n)|^2|\eta|^{2n},\] and the estimate for $\norm{A^nv}$ follows.

We now estimate the growth of $\norm{(I+A+\dots+A^{n-1})v}$ as $n$ goes to $+\infty$. Again, after decomposing the space according to the Jordan blocks of $A$, we can assume that $A$ has a single complex eigenvalue $\eta$. If $\eta\neq 1$, then $A-I$ is invertible and $(I+A+\dots+A^{n-1})v=(A-I)^{-1}(A^{n}v-v)$. In this case the result for $\norm{(I+A+\dots+A^{n-1})v}$ follows from the above.

So let us finally assume that $A=I+N$, where $N$ is a nilpotent matrix. As above \[A^s=\sum_{\ell=0}^{k-1} {s\choose \ell}N^\ell,\] 
 with ${s\choose \ell}=0$ if $\ell>s$,
so the entries of $A^s$ are polynomials in $s$ of degree at most $k-1$. Therefore, for every $i\in\{1,\dots,k\}$, the $i^{\rm th}$ entry of $A^nv$ is given by a polynomial $P_{i,v}(n)$ of degree at most $k-1$. For a fixed $v$, if $d_0$ is the maximal degree of the polynomials $P_{i,v}$ as $i$ varies, then the entries of the vector $(I+A+\dots+A^{n-1})v$ are all polynomials in $n$,  the maximal degree being $d_0+1$. The conclusion then follows as in the first part of this proof.
 \end{proof}

\subsection{Surface  groups}
\label{sg}

We now suppose that $\varphi$ is induced by a pseudo-Anosov homeomorphism $f$ of a compact surface $\Sigma$. It is well known that $\norm{\varphi^n(g)}$ is constant (if $g$ is represented by a curve contained in $\partial \Sigma$) or grows like $\lambda^n$, with $\lambda$ the dilation factor of $f$, see e.g.\ \cite[Section~14]{Farb:2012ws}.
We now consider palangres.
\paragraph{Surfaces with boundary.}

If $\Sigma$ has boundary, then $G=\pi_1\Sigma$ is free and we can control $L_n(\varphi, g)R_n(\varphi,h)$ through the following trick. Extend $\varphi$ to $G*\F_2$ by sending the first new generator $t_1$  to $  t_1g$ and the second one $t_2$ to $ht_2$.  

As in   \autoref{exa: need palangre growth}, the growth of $L_n(\varphi, g)R_n(\varphi,h)$ is that of the conjugacy class of $t_1t_2$.
\Total{} thus holds
because \autoref{main} is known in free groups, using train tracks (see \cite{Levitt:2009hx}); in the present case there is a single geometric EG stratum, and palangres are bounded, grow linearly, or like $\lambda^n$.

\paragraph{Closed surfaces.}
This trick cannot be used if $\Sigma$ is closed, because $G*\F_2$ is infinitely-ended and our proof of \autoref{main} in that case  requires palangres. We therefore   give a direct argument.
We prove the geometric version of \total{}  (\autoref{palgeo2}).

\begin{prop}\label{res: growth surface}
Let $G=\pi_1(\Sigma)$, where $\Sigma$ is a closed hyperbolic surface, with universal cover $X$ (a hyperbolic plane).
Let $\varphi \in \Aut(G) $ be induced by a pseudo-Anosov homeomorphism $f$  with dilation factor $\lambda$.
Given $\alpha=gt$ and $ \beta=ht$ in $E =G \rtimes_\varphi \Z $, and $x   \in X$, the sequence $\dist x{\alpha^n \beta^{-n} x}$ is bounded or  grows like $\lambda^n$. 
\end{prop}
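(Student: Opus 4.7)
The strategy is to first reduce the double palangre $\alpha^n\beta^{-n}$ to a single right palangre in $G$, and then to estimate that palangre using pseudo-Anosov dynamics on $\Sigma$. Writing $\alpha=gt$ and $\beta=ht$ in $E$, an induction based on the recursion
\[
\alpha^n\beta^{-n} \;=\; \alpha\cdot\bigl(\alpha^{n-1}\beta^{-(n-1)}\bigr)\cdot\alpha^{-1}\cdot(\alpha\beta^{-1}),
\]
together with $\alpha\beta^{-1}=gh^{-1}$ and the fact that conjugation by $\alpha$ acts on $G$ as $\iota_g\circ\varphi$, yields the identity
\[
\alpha^n\beta^{-n} \;=\; R_n(\tilde\varphi, w), \qquad w := gh^{-1}\in G,\ \tilde\varphi := \iota_g\circ\varphi\in\Aut(G).
\]
Since $\tilde\varphi$ lies in the outer class of $\varphi$, it is still induced by the pseudo-Anosov $f$, and the proposition reduces to estimating the growth of a single right palangre under a pseudo-Anosov automorphism of a closed surface group.

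If $w=1$ (equivalently $\alpha=\beta$), then $\alpha^n\beta^{-n}\equiv 1$ and the sequence is bounded. Assume henceforth $w\neq 1$. The upper bound follows from subadditivity: since $\dist x{\tilde\varphi^i(w)x}\leq C\lambda^i$ for the pseudo-Anosov $\tilde\varphi$ (standard growth of elements under pseudo-Anosov iteration), the triangle inequality gives
\[
\dist x{R_n(\tilde\varphi,w)x}\;\leq\;\sum_{i=0}^{n-1}\dist x{\tilde\varphi^i(w)x}\;\leq\;C'\sum_{i=0}^{n-1}\lambda^i\;\leq\;C''\lambda^n.
\]

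For the lower bound I would combine a pigeon-hole dichotomy with a transverse-measure computation. If $\dist x{R_nx}$ were bounded, the $R_n$ would take only finitely many values, and the recursion $R_{n+k}=\tilde\varphi^n(R_k)\cdot R_n$ shows that any repetition $R_n=R_{n+k}$ forces $R_k=1$, i.e.\ $\alpha^k=\beta^k$ in $E$. Because $E=\pi_1(M_f)$ is torsion-free and word-hyperbolic (Thurston's hyperbolization), $k$-th roots are unique, so $\alpha=\beta$, contradicting $w\neq 1$; hence $\dist x{R_nx}\to\infty$. To promote unboundedness to the sharp exponential lower bound, represent $w$ by a loop $\omega$ at a fixed point of $f$ (after replacing $\varphi$ by a positive power, allowed by \autoref{puiss}), so that $R_n(\tilde\varphi,w)$ is represented by the concatenated loop $\gamma_n=f^{n-1}(\omega)*\cdots*f(\omega)*\omega$. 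Using the invariant pair of measured foliations $(F^s,\mu^s)$ and $(F^u,\mu^u)$ of $f$, with $f_*\mu^s=\lambda\mu^s$, a cancellation-free argument (based on the asymptotic alignment of the iterates $f^i(\omega)$ with $F^u$, which is transverse to $F^s$) yields $i([\gamma_n],F^s)\asymp\lambda^n$. Since the hyperbolic length of a free homotopy class dominates its intersection number with any measured foliation, this gives the desired lower bound $\dist x{R_nx}\geq c\lambda^n-C$.

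The main obstacle I foresee is precisely this lower bound on $i([\gamma_n],F^s)$, i.e.\ the control of cancellations at the junctions of $\gamma_n$. This is a standard point in pseudo-Anosov theory, handled either via a train-track model for $f$ on $\Sigma$ (in which successive iterates of $\omega$ stack up without folding) or via the north--south dynamics of $f$ on Thurston's boundary of Teichmüller space; in either case, junction cancellations contribute only a bounded error per step and cannot dissipate the exponential contribution of the dominant factor $f^{n-1}(\omega)$.
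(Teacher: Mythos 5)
Your reduction to a single right palangre $R_n(\tilde\varphi,w)$ with $w=gh^{-1}$ and $\tilde\varphi=\iota_g\circ\varphi$ is correct (it is essentially Formula~(\ref{equation:palangres}) recast), and the upper bound is fine. The pigeonhole argument combined with uniqueness of roots in the torsion-free hyperbolic mapping-torus group $E=\pi_1(M_f)$ is a nice observation; it proves that $w\ne 1$ forces the sequence to be unbounded. However, the proposition asserts more: when unbounded, the sequence must grow \emph{like} $\lambda^n$, and your sketch of the lower bound via $i([\gamma_n],F^s)\asymp\lambda^n$ is exactly the step you have not carried out. Controlling the cancellation between $f^{n-1}(\omega)$ and the lower-order terms is not a detail that can be waved away as ``standard''; it is the entire content, and unboundedness alone gives you nothing quantitative. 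As written, the proof has a genuine gap.

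The paper's proof avoids the cancellation problem entirely by working in the dual $\R$-trees $T_\pm$ of the stable and unstable foliations, on which $E$ acts \emph{affinely} with $t$ scaling by $\lambda^{\pm1}$. Writing the $G$-invariant singular flat metric as $\distV=\sqrt{\distV[+]^2+\distV[-]^2}$, the $\distV[-]$-component of $\dist x{\alpha^n\beta^{-n}x}$ is bounded by the triangle inequality because $\alpha,\beta$ contract on $T_-$, while the $\distV[+]$-component satisfies the exact identity
\[
\dist[_+]x{\alpha^n\beta^{-n}x}=\lambda^n\,\dist[_+]{\alpha^{-n}x}{\beta^{-n}x},
\]
with $\alpha^{-n}x\to y_\alpha$ and $\beta^{-n}x\to y_\beta$, the fixed points of the contractions $\alpha^{-1},\beta^{-1}$ in $\bar T_+$. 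The dichotomy ($y_\alpha=y_\beta$: bounded; $y_\alpha\ne y_\beta$: grows like $\lambda^n$) is then immediate, and the tree geometry absorbs all cancellation at once. Pushing your loop-and-foliation approach through would essentially require reconstructing the map $X\to T_+$ and this homothety identity, so it is more efficient to start there; your pigeonhole step would then be superfluous, since the homothety formula already produces the dichotomy.
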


\begin{rema}
\label{rem: spectrum pA homeo}
Here $\lambda $ is an algebraic unit whose degree may be bounded in terms of $\abs{\chi(\Sigma)}$.
    If $\pi(\alpha)=\pi(\beta)=k$, then $\alpha,\beta$ represent automorphisms in the same outer class as  $\phi^k$, and $\dist x{\alpha^n \beta^{-n} x}$ is bounded or grows like $\lambda^{ | k | n}$.
\end{rema}

\begin{proof}
 Let $T_-$ and $T_+$ be the $\R$-trees associated to the stable and unstable foliations of $f$ as in \cite{Morgan:1991en}.
 They are projectively $\varphi$-invariant, so the (isometric) action of $G$ on $T_{\pm}$ extends to an affine action of $E$, with $t$ multiplying distances by $\lambda^{\pm1}$. 
 
 Using a lift of $f$, we extend the isometric action of $G$ on $X$ to a quasi-isometric action of $E$. 
 There are natural $E$-equivariant maps from $X$ to $T_\pm$, defined using $f$-invariant foliations or a quadratic differential -- see for instance \cite[Chapter~11]{Kapovich:2001uf} -- and 
 for $a,b\in X$ we denote by  $\dist[_{\pm}]ab$ the distance of the  images of $a$ and $b$  in $T_\pm$.  Then $\distV=\sqrt{\sf \distV[+]^2+\distV[-]^2}$ is a $G$-invariant singular flat metric on $X$ which is quasi-isometric to the hyperbolic metric, and we can use it to estimate the distance between   $x$ and ${\alpha^n \beta^{-n} x}$. We study $\distV[+]$ and $\distV[-]$ separately.
 
	Since  $\alpha$ and $\beta$ act on $T_+$ as homotheties of ratio $ \lambda > 1$, the sequence 
	  $\alpha^{-n}x$ converges, as $n\to+\infty$, to the unique fixed point $y_\alpha$ of $\alpha$ 
(it may be in the metric completion of $T_+$ rather than  in $T_+$ itself). Similarly, 
  $\beta^{-n}x\to y_\beta$.

If  $y_\alpha = y_\beta$, then $\dist[_+]{y_\alpha}{\alpha^n\beta^{-n}{y_\alpha}}=0$, so $\dist[_+]x{\alpha^n\beta^{-n}x}$ is bounded.
	If	$y_\alpha \neq y_\beta$, then $\dist[_{+}]{\alpha^{-n}x}{\beta^{-n}x}$ converges to $\dist[_{+}]{y_\alpha}{y_\beta} > 0$, and $\dist[_+]x{\alpha^n\beta^{-n}x}
			=\lambda^n \dist[_+]{\alpha^{-n}x}{\beta^{-n}x}$ grows like $\lambda^n$.

We now consider $\distV[-]$. It follows from the triangle inequality and the fact that $\alpha^n \beta^{-n}$ belongs to $G$, hence acts by isometries on $X$,  that 
		\begin{align*}
			\dist[_-] x{\alpha^n \beta^{-n}x}
			& \leq  \dist[_-] x{\alpha^n x} +\dist[_-]{\alpha^nx}{\alpha^n \beta^{-n}x}  \\
			& \leq \dist[_-]x{\alpha^nx}+\dist[_-]{\beta^n x}x.
		\end{align*}
	Since $\alpha$ and $\beta$ act on $T_-$ as homotheties of ratio $\lambda\m<1$, both $\dist[_-]x{\alpha^n x}$ and $\dist[_-]{\beta^nx}x$ remain bounded as $n\to+\infty$.
	We conclude that $\dist x{\alpha^n \beta^{-n} x}$ is bounded or	grows like $\lambda^n$.
\end{proof}

\section{A metric Scott-Wall construction}
\label{espace}

The goal of the next two sections is to   carry \total{} from the vertex stabilizers of the refined JSJ tree $T$ to the whole group $G$. To do so, we will let $G$ act as covering transformations on a suitable metric space $(X, \distV)$, coming with a $G$-equivariant projection to $T$. 

This space is given by the Scott-Wall construction \cite{Scott:1979uk}, but we will also need to equip it with an appropriate metric and an action of  $E$ by homeomorphisms. This is the contents of \autoref{MSW} below, which is the goal of this section.
 The definition of \total{}   does not require the group $E$ to act on $X$. 
However, this will be used in a crucial way in \autoref{final}.

For a heuristic argument on how the space $X$ is used in \autoref{final}, assume as in the introduction that $\varphi$ preserves a cyclic amalgam $G=A*_CB$,  with $G$ one-ended  and hyperbolic, so that $M:=X/G$ consists of two vertex spaces $M_A,M_B$ joined by an annulus $U$. Let $X_A, X_B$ be adjacent lifts of  $M_A,M_B$ preserved by $A,B$ respectively, and let $Y$ be the strip  joining them (a lift of $U$).
Suppose that  $\varphi $ may be represented by a homeomorphism $\tilde f$ of $X$  lifting a homeomorphism $f$ of $M$ and preserving    $X_A,X_B$.

Let  $o \in Y$ be a basepoint, and consider  an element $g= ab$ in $G$, with $a\in A$ and $b\in B$. Since $C$ is malnormal and $Y$ is quasiconvex, hyperbolicity implies that the closest point projection of $a^{-1}Y$ (\resp $bY$) on $Y$ is essentially a single point, say $y$ (\resp $z$). 

Now any path from $a^{-1}o$ to $bo$ in $X$ crosses $Y$.
An exercise in hyperbolic geometry using the quasiconvexity of $Y$ then shows that 
\begin{equation*}
    \dist o{go} = \dist{a^{-1}o}{bo} \asymp \dist {a^{-1}o}{y} + \dist yz + \dist z{bo}.
\end{equation*}
Since $\tilde f$ is a quasi-isometry fixing $Y$, the projection of $\tilde f(bY) = \varphi(b)Y$ onto $Y$ is essentially $z$, up to a \emph{bounded error} that does not depend on $a$ and $b$.

Iterating the observation, we get that the projection $\varphi^n(b)Y$ onto $Y$ is essentially $z$, up to a \emph{linear error} in $n$.
The same goes with $\varphi^n(a^{-1})Y$.
Reasoning as before, we get
\begin{align*}
    \dist o{\varphi^n(g)o} 
    & = \dist{\varphi^n(a^{-1})o}{\varphi^n(b)o} \\
    & \asymp \dist {\varphi^n(a^{-1})o}{y} + \dist yz + \dist z{\varphi^n(b)o} + O(n),
\end{align*}
where $O(n)$ grows at most linearly.

It turns out that we understand the restriction of $\varphi$ to each factor $A$ and $B$ well enough to prove that the above linear error is actually bounded (this is a property which we call \emph{quasi-equivariant projections} in \autoref{def: compatible structure}).

Now recall that $a$ and $b$ belong to the factors $A$ and $B$, on which we understand the behavior of $\varphi$.
The previous estimate thus provides a control of the growth of $\abs{\varphi^n(g)}$.
A similar argument works to estimate the growth of $\norm{\Phi^n(g)}$.

 The same strategy applies when $G$ is a toral relatively hyperbolic group. 
The main feature of negative curvature that we use in this context is that the $G$-orbit $\mathcal Y$ of the strip $Y$ is separated and uniformly contracting (see the definitions below).
In order to control the action of $\varphi$ on $\mathcal Y$, we actually need a few more properties.
These are captured by the notion of a \emph {compatible peripheral structure} defined in the next subsection.

\subsection{Peripheral structures}\label{periph}
Let $X$ be a proper geodesic metric space.
By abuse, we will often confuse a geodesic $c \colon \intval ab \to X$ with its image (seen as a subset of $X$).

\subsubsection{Projection, entry/exit point, $D$-neighborhood, contracting}

Let $Y$ be a non-empty closed subset of $X$.
A \emph{projection of $x \in X$ onto $Y$} is a point $p \in Y$ such that $\dist xp = \dist xY$.
Such a point always exists since $X$ is proper and $Y$ is closed.
If $Z$ is another subset of $X$, the \emph{projection of $Z$ onto $Y$}, denoted by $\Pi_Y(Z)$, is the set of all projections  of points of $Z$ onto $Y$.
Formally
\begin{equation*}
	\Pi_Y(Z) = \set{y \in Y}{\exists z \in Z,\ \dist zy = \dist zY}.
\end{equation*}
Let $c \colon \intval ab \to X$ be a path intersecting $Y$.
The \emph{entry} and \emph{exit points} of $c$ in $Y$ are the points $c(t_-)$ and $c(t_+)$, where 
\begin{equation*}
	t_- = \min \set{t \in \intval ab}{c(t) \in Y}
	\quad \text{and} \quad
	t_+ = \max \set{t \in \intval ab}{c(t) \in Y}.
\end{equation*}
The \emph{$D$-neighborhood} of $Y$, denoted by $Y^{+D}$, consists of all points $x \in X$ such that $\dist  x Y  \leq D$.

\begin{defi}[Contracting]
\label{def: contracting set}
	Let $D>0$. 
	A closed subset $Y$ of $X$ is \emph{$D$-contracting} if, for every geodesic $c \colon I \to X$ satisfying $\dist c  Y  \geq D$, the projection $\Pi_Y(c)$ has diameter at most $D$.
	A subset $Y$ is \emph{contracting} if it is $D$-contracting for some  $D \in \R_+$.
\end{defi}

For instance, it is a standard fact that any closed quasiconvex subset of a hyperbolic space is contracting.

The next statements are (direct) consequences of the definition.
Their proofs are left to the reader, see  for instance Yang \cite{Yang:2020ub}.

\begin{lemm}
\label{res: projection contracting set}
	Let $Y \subset X$ be a $D$-contracting subset.
	Let $x,x'\in X$, and let $c \colon \intval ab \to X$ be a geodesic from $x$ to $x'$.
	Let $p$ and $p'$ be respective projections of $x$ and $x'$ onto $Y$.
	If $\dist x Y  < D$ or $\dist p{p'} > D$, then the following hold:
	\begin{enumerate}
		\item $\dist c Y   < D$; in particular, $Y^{+D}\cap c$ is non-empty;
		\item \label{enu: projection contracting set - geo} the entry point (\resp exit point) of $c$ in $Y^{+D}$ is $2D$-close to $p$ (\resp $p'$).
	\end{enumerate}
\end{lemm}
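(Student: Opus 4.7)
The plan is to deduce both conclusions directly from the definition of $D$-contracting, handling the two alternatives in the hypothesis of (1) separately.

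For (1), the case $\dist x Y < D$ is immediate: since $x = c(a)$ lies on $c$, one has $\dist c Y \leq \dist x Y < D$ and $x$ itself is a point of $Y^{+D} \cap c$. In the case $\dist p{p'} > D$, the plan is to argue by contradiction. If $\dist c Y \geq D$, then the $D$-contracting property applied to $c$ gives that $\Pi_Y(c)$ has diameter at most $D$; since $p$ and $p'$ are projections of the endpoints $x = c(a)$ and $x' = c(b)$, they both lie in $\Pi_Y(c)$, forcing $\dist p{p'} \leq D$ and contradicting the hypothesis. In either case, $\dist c Y < D$, and the continuity of $t \mapsto \dist{c(t)}Y$ produces a point of $c$ in $Y^{+D}$.

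For (2), I focus on the entry point $q = c(t_-)$; the exit point is symmetric. The subtlety is that the $D$-contracting property cannot be applied to $c|_{[a, t_-]}$ directly, because $q$ may lie exactly on the boundary of $Y^{+D}$, giving $\dist{c|_{[a,t_-]}}Y = \dist q Y = D$ without strict inequality where needed. The natural workaround is approximation: for every $\epsilon > 0$, every point of $c|_{[a, t_- - \epsilon]}$ lies outside $Y^{+D}$, so by continuity on this compact interval the infimum $\dist{c|_{[a, t_- - \epsilon]}}Y$ is $\geq D$, and contracting applies. In particular, any projection $p_\epsilon$ of $c(t_- - \epsilon)$ onto $Y$ satisfies $\dist p{p_\epsilon} \leq D$. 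Combining this with $\dist{c(t_- - \epsilon)}{p_\epsilon} \leq \dist{c(t_- - \epsilon)}q + \dist q Y \leq \epsilon + D$ and $\dist q{c(t_- - \epsilon)} \leq \epsilon$ via two triangle inequalities yields $\dist p q \leq 2D + 2\epsilon$. Letting $\epsilon \to 0$ gives $\dist p q \leq 2D$, as required.

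The whole proof is essentially an unpacking of the definition of contracting; I expect the only genuinely technical point to be the approximation step in (2), which is what allows the argument to go through in the borderline situation $\dist q Y = D$.
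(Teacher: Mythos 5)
Your proof is correct, and Part (1) is the natural argument. For Part (2), however, the $\epsilon$-approximation is not actually needed: the ``subtlety'' you flag rests on a misreading of the definition of $D$-contracting, which requires only the non-strict inequality $\dist cY \geq D$. When $t_->a$, every $t<t_-$ has $\dist{c(t)}Y>D$ while $q=c(t_-)$ satisfies $\dist qY\leq D$, so continuity forces $\dist qY=D$ exactly; hence $\dist{c|_{\intval{a}{t_-}}}Y=D\geq D$ and the contracting property applies directly to $c|_{\intval{a}{t_-}}$. Taking any projection $q'$ of $q$ onto $Y$ gives $\dist p{q'}\leq D$ and then $\dist pq\leq\dist p{q'}+\dist{q'}q=\dist p{q'}+\dist qY\leq 2D$, with no limit to take. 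You should also record the degenerate case $t_-=a$ (which is precisely what occurs under the first alternative $\dist xY<D$): there $q=x$ and $\dist pq=\dist xY\leq D$. Your symmetry observation for the exit point is fine, since once Part (1) guarantees that $t_\pm$ exist, the estimate depends only on the definition of $t_\pm$ and not on which alternative of the hypothesis was invoked.
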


\begin{rema}
\label{rem: Lipschitz proj}
\label{rem: projection contracting set2}
We note the following consequences of Point~\ref{enu: projection contracting set - geo}.
	\begin{enumerate}
		\item 
		The nearest point projection onto $Y$ is large-scale $1$-Lipschitz.
		More precisely, for every subset $Z \subset X$, we have
		\begin{equation*}
			\diam (\Pi_Y(Z)) \leq \diam(Z) + 4D.
		\end{equation*}
		\item 
 		If $p$ is a projection of some point $x\in X$ onto $Y$, then
		\begin{equation*}
			\dist x{y} \geq \dist xp + \dist p{y}   - 4D, \quad \forall y\in Y. 
		\end{equation*}
		\item If $p$ and $p'$ are respective projections of $x$ and $x'$ on $Y$ such that $\dist p{p'} > D$, then
	    \begin{equation*}
	             \dist x{x'} \geq \dist xp + \dist p{p'} +  \dist {p'}{x'} - 8D.\qedhere 
	    \end{equation*}
	\end{enumerate}
\end{rema}

\begin{rema}
\label{rem: growth in qc sbgp}
    Note that, if $g$ in an isometry of $X$ leaving a $D$-contracting subset $Y$ invariant, then the first item of \autoref{rem: Lipschitz proj} implies that 
	\begin{equation*}
	    \norm[X] g \leq \norm[Y]g \leq \norm[X]g + 4D.
	\end{equation*}
	In particular, if $H$ is a quasiconvex subgroup of a hyperbolic group $G$ that is invariant under some automorphism $\varphi \in \aut G$, then, for every $h \in H$, the growth type of $\norm{\phi^n(h)}$ is the same when computed in $H$ or in $G$.
\end{rema}

\begin{lemm}[Quasi-convexity]
\label{res: contraction vs quasi-convex}
	Let $A \geq 0 $.
	Let $Y\inc X$ be a $D$-contracting subset.
Then    any geodesic $c$  joining two points of $Y^{+A}$  lies in the $C$-neighborhood of $Y$, with $C =  \max \left\{ A, D\right\} + 3D/2$.
\end{lemm}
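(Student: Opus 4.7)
The plan is to control the $1$-Lipschitz function $\rho \colon [a,b] \to \R_+$ defined by $\rho(t) = \dist{c(t)}{Y}$: by hypothesis $\rho(a), \rho(b) \leq A$, and I want to show that $d := \max_t \rho(t)$ is at most $C$. If $d \leq D$ then $c \subset Y^{+D} \subset Y^{+C}$ and there is nothing to do, so I can fix $t_0 \in [a,b]$ with $\rho(t_0) = d > D$.

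First I would isolate a subinterval $[s_1, s_2] \ni t_0$ on which the subgeodesic $c|_{[s_1, s_2]}$ stays far enough from $Y$ for the $D$-contracting property of \autoref{def: contracting set} to apply. The natural candidates are
\begin{equation*}
    s_1 = \sup\{t \in [a, t_0] : \rho(t) \leq D\}, \qquad s_2 = \inf\{t \in [t_0, b] : \rho(t) \leq D\},
\end{equation*}
with the convention $s_1 = a$ (resp.\ $s_2 = b$) when the corresponding set is empty. Continuity of $\rho$ and the intermediate value theorem give $\rho > D$ on $(s_1, s_2)$; moreover $\rho(s_i) = D$ whenever $s_i$ is interior to $[a,b]$, while $\rho(s_i) \leq A$ when $s_i \in \{a, b\}$, so in every case $\rho(s_i) \leq \max\{A, D\}$. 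The contracting property applied to $c|_{[s_1, s_2]}$ shows that the projections $p_i$ of $c(s_i)$ onto $Y$ satisfy $\dist{p_1}{p_2} \leq D$, and then the triangle inequality through $p_1, p_2$ gives
\begin{equation*}
    s_2 - s_1 = \dist{c(s_1)}{c(s_2)} \leq \rho(s_1) + D + \rho(s_2).
\end{equation*}

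Combining this with the $1$-Lipschitz inequalities $d \leq \rho(s_i) + |t_0 - s_i|$, $i = 1, 2$, and summing yields the master estimate $d \leq \rho(s_1) + \rho(s_2) + D/2$. In the ``generic'' situation where at least one $s_i$ is interior to $[a,b]$ (so $\rho(s_i) = D$), this immediately produces $d \leq \max\{A, D\} + 3D/2 = C$, as wanted. The step I expect to be the main obstacle is the ``escape'' case where $s_1 = a$ \emph{and} $s_2 = b$, i.e.\ the whole geodesic already avoids $Y^{+D}$: here the naive summation only gives $d \leq 2A + D/2$, and to recover the sharp constant $C$ I would either iterate the above construction using an intermediate threshold strictly between $D$ and $A$, or invoke the quasi-minimization estimate of Remark~\ref{rem: projection contracting set2}(3) to replace the crude bound $\dist{c(a)}{c(b)} \leq 2A + D$ by a tighter one.
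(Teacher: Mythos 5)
Your reduction to the $1$-Lipschitz function $\rho$ and the choice of $s_1,s_2$ are sound, and the technical point you might worry about — applying $D$-contraction to $c|_{[s_1,s_2]}$ — does go through: by continuity $\rho(s_i)\geq D$ in every case (it equals $D$ at an interior $s_i$, and is $>D$ at a boundary $s_i$ under the empty-set convention), so $\rho\geq D$ on all of $[s_1,s_2]$. You have also correctly pinpointed the obstacle, namely the escape case with $A>D$ and $c$ never entering $Y^{+D}$. But neither repair you suggest closes it. Raising the threshold to $D'\in(D,A]$ does not help: $D$-contraction still only bounds $\dist{p_1}{p_2}$ by $D$, not $D'$, while at a boundary $s_i\in\{a,b\}$ you still only have $\rho(s_i)\leq A$; the resulting estimate $d\leq A+D'+D/2$ only worsens as $D'$ grows. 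And Remark~\ref{rem: projection contracting set2}(3) requires $\dist{p}{p'}>D$, which is precisely what fails in the escape case, since $D$-contraction applied to all of $c$ forces $\dist{p_a}{p_b}\leq D$.

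In fact the stated constant appears not to follow from the hypotheses when $A>D$, and the bound $2\max\{A,D\}+D/2$ your argument yields is the one that actually holds. Consider $X$ a circle of circumference $L$ and $Y=\{0\}$ a single point: then $\Pi_Y\equiv\{0\}$, so $Y$ is $D$-contracting for every $D>0$. For $L/4<A<L/2$, the geodesic joining the two points at distance $A$ from $Y$ goes through the antipode, at distance $L/2$ from $Y$; this exceeds $\max\{A,D\}+3D/2=A+3D/2$ as soon as $D<(L-2A)/3$, whereas $2A+D/2\geq L/2$ always holds since $A>L/4$. The two constants coincide exactly when $A\leq D$ (which covers the use with $A=0$ in Proposition~\ref{res: example periph structure}). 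So rather than trying to push the escape case down to the stated $C$, the honest conclusion is that your argument proves the lemma with $C=2\max\{A,D\}+D/2$, and the constant in the statement should probably be adjusted accordingly.
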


\begin{lemm}
\label{res: diameter intersection vs projection}
	Let $Y$ and $Z$ be two $D$-contracting sets.
	For every $A \in \R_+$, we have
	\begin{equation*}
		\diam\left( Y^{+A} \cap Z^{+A}\right) \leq \diam\left( \Pi_Y(Z)\right) + 2A + 22D. \qedhere
	\end{equation*}
\end{lemm}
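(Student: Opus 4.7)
My plan is to fix two arbitrary points $x, x' \in Y^{+A} \cap Z^{+A}$ and to bound $\dist x{x'}$ by combining the triangle inequality with the Lipschitz property of projections onto a contracting set (\autoref{rem: Lipschitz proj}).

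I would start by setting up the natural projections: let $p=\pi_Y(x)$, $p'=\pi_Y(x') \in Y$ and $q=\pi_Z(x)$, $q'=\pi_Z(x') \in Z$, so that $\dist xp$, $\dist xq$, $\dist{x'}{p'}$, $\dist{x'}{q'}$ are all at most $A$. Then set $r=\pi_Y(q)$ and $r'=\pi_Y(q')$; by definition these belong to $\Pi_Y(Z)$, so $\dist r{r'} \le \diam(\Pi_Y(Z))$. Applying the large-scale $1$-Lipschitz estimate of \autoref{rem: Lipschitz proj}(1) to the pair $\{x,q\}$ (both projected onto $Y$) gives $\dist pr \le \dist xq + 4D \le A + 4D$, and symmetrically $\dist{p'}{r'} \le A + 4D$.

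Inserting these into a five-term triangle inequality around the chain $x \to p \to r \to r' \to p' \to x'$ yields the preliminary estimate
\[
    \dist x{x'} \le A + (A + 4D) + \diam(\Pi_Y(Z)) + (A + 4D) + A = \diam(\Pi_Y(Z)) + 4A + 8D.
\]
When $A \le 7D$ this already implies the desired bound since $4A + 8D \le 2A + 22D$; so the crude Lipschitz argument handles the small-$A$ regime outright.

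For $A > 7D$ the linear coefficient has to be sharpened from $4A$ to $2A$. My plan there is to argue along the geodesic $c$ from $x$ to $x'$: by \autoref{res: contraction vs quasi-convex} it lies in $Y^{+C} \cap Z^{+C}$ for $C = A + 3D/2$, and by \autoref{res: projection contracting set}(2), as soon as $\dist p{p'} > D$ (otherwise one has $\dist x{x'} \le 2A + D$ directly), the entry and exit points of $c$ in the narrower neighborhood $Y^{+D}$ lie within $2D$ of $p$ and $p'$ respectively, and analogously for $Z^{+D}$ at distance $2D$ from $q, q'$. One then bounds $\dist x{x'}$ by the sum of two ``tails'' of length at most $A + 2D$ (from $x$ to the entry point in $Y^{+D}$, and from the exit point to $x'$), plus a middle piece whose projection onto $Y$ is forced to lie in an $O(D)$-neighborhood of $\Pi_Y(Z)$ (because any point in $Y^{+D} \cap Z^{+D}$ projects onto $Y$ within $O(D)$ of $\Pi_Y(Z)$ via Lipschitz applied to the sharpened neighborhoods). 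The main obstacle I foresee is organizing the case analysis cleanly — in particular, controlling what happens when the entry/exit intervals in $Y^{+D}$ and $Z^{+D}$ along $c$ do not overlap — so that the combined additive error is kept at $22D$ uniformly.
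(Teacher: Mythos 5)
Your overall strategy is sound, and the gap you flag is in fact easy to close. A few remarks to tighten the argument.

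The crude Lipschitz chain $x \to p \to r \to r' \to p' \to x'$ giving $4A+8D$ is correct, and observing that it already covers $A \le 7D$ is a valid (if unnecessary) shortcut. The geodesic argument you sketch for large $A$ is the right one, and with the following adjustments it works for all $A$, making the dichotomy moot. First dispose of the degenerate cases: if $\dist p{p'}\le D$ or $\dist q{q'}\le D$ then $\dist x{x'}\le 2A+D$ by the triangle inequality. Otherwise \autoref{res: projection contracting set}(\ref{enu: projection contracting set - geo}) applies to both $Y$ and $Z$: writing $c\colon[0,\ell]\to X$ for the geodesic from $x$ to $x'$, and letting $[s_Y,t_Y]$ (\resp $[s_Z,t_Z]$) be the interval between entry and exit of $c$ in $Y^{+D}$ (\resp $Z^{+D}$), all four of $s_Y$, $s_Z$, $\ell-t_Y$, $\ell-t_Z$ are at most $A+2D$ (for instance $s_Z=\dist x{c(s_Z)}\le \dist xq+\dist q{c(s_Z)}\le A+2D$).

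The non-overlap case you worried about is actually the easy one: if $[s_Y,t_Y]\cap[s_Z,t_Z]=\emptyset$, say $t_Y<s_Z$, then $\ell=t_Y+(\ell-t_Y)<s_Z+(\ell-t_Y)\le 2A+4D$, and you are done immediately. In the overlap case, set $a=\max(s_Y,s_Z)$ and $b=\min(t_Y,t_Z)$; your two ``tails'' should run from $x$ to $c(a)$ and from $c(b)$ to $x'$ (i.e.\ to the \emph{later} of the two entry points and from the \emph{earlier} of the two exit points, not to the entry/exit in $Y^{+D}$ alone), each of length at most $A+2D$. For the middle piece, $c(a)$ and $c(b)$ lie in $Y^{+5D/2}\cap Z^{+5D/2}$ by \autoref{res: contraction vs quasi-convex}, so by the Lipschitz property their projections onto $Y$ lie within $5D/2+4D=13D/2$ of $\Pi_Y(Z)$, whence $\dist{c(a)}{c(b)}\le \diam(\Pi_Y(Z))+13D+2\cdot(5D/2)=\diam(\Pi_Y(Z))+18D$. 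Summing, $\ell\le 2(A+2D)+\diam(\Pi_Y(Z))+18D=\diam(\Pi_Y(Z))+2A+22D$, matching the stated constant exactly. So your plan is correct once you (i) replace the $Y^{+D}$ tails by tails to the overlap endpoints, and (ii) notice that the no-overlap case is not an obstacle but a short-circuit.
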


\subsubsection{Peripheral structure}

We can now define peripheral structures.

\begin{defi}[Peripheral structure]
\label{def: contracting family}
	A family $\mathcal Y$ of closed subsets of $X$ is a \emph{peripheral structure} if there exists $D \in\R_+$ such that:
	\begin{itemize}
		\item  (\emph{Uniform contraction}). Every element $Y \in \mathcal Y$ is $D$-contracting.
		\item  (\emph{Separation}). For any distinct $Y, Y' \in \mathcal Y$, the projection $\Pi_Y(Y')$ has diameter at most $D$.
	\end{itemize}
\end{defi}

Observe that any subfamily of a peripheral structure is also a peripheral structure.

Relatively hyperbolic groups provide   examples of peripheral structures.
More precisely, we have the following statement.

\begin{prop}
\label{res: example periph structure}
	Let $G$ be a group hyperbolic relative to $\{P_1, \dots, P_n\}$.
	Assume that $G$ acts properly cocompactly on a geodesic metric space $X$.
    Fix $C>0$, and 	for each $i \in \{1, \dots, n\}$  let $Y_i $  be a $P_i$-invariant subspace of $X$ such that 
$\diam(Y_i/P_i) \leq C$. Then the collection 
	\begin{equation*}
		\mathcal Y = \set{ gY_i}{ i \in \intvald 1n, \ g \in G/P_i}
	\end{equation*}
	is a peripheral structure.
\end{prop}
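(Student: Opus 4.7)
The plan is to deduce the proposition from two well-known features of proper cocompact actions of relatively hyperbolic groups: orbits of maximal parabolic subgroups are uniformly contracting in $X$, and distinct peripheral orbits have uniformly bounded projection onto each other.

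First I would reduce to the case of point-orbits. Pick a point $y_i \in Y_i$ for each $i$. The hypothesis $\diam(Y_i/P_i) \leq C$ says that $Y_i$ lies at Hausdorff distance at most $C$ from the orbit $P_i \cdot y_i$; equivariantly, $gY_i$ lies at Hausdorff distance at most $C$ from $gP_i \cdot y_i$ for every coset $gP_i \in G/P_i$. Both clauses of \autoref{def: contracting family} are preserved under uniformly bounded Hausdorff perturbations, with the constant $D$ worsening only in a controlled way: if $Y'$ lies within Hausdorff distance $C$ of a $D$-contracting set $Y$, then $Y'$ is $D'$-contracting for some $D' = D'(C,D)$, and the projections between perturbed sets differ from the original projections by an additive error controlled by $C$ and $D$. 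It is therefore enough to verify the two conditions for the family $\mathcal O = \{gP_i \cdot y_i : i \in \{1,\dots,n\}, \ gP_i \in G/P_i\}$.

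The uniform contraction of the orbits in $\mathcal O$ is the classical statement that, in a proper cocompact action of a relatively hyperbolic group, peripheral orbits are uniformly contracting subsets of $X$. One concrete route is via the Groves--Manning cusped complex: attaching combinatorial horoballs along the orbits $gP_i \cdot y_i$ produces a $G$-equivariant hyperbolic space $\hat X$ containing $X$, in which each such orbit sits on a horosphere; the elementary contracting property of horoballs in hyperbolic spaces, combined with the fact that a geodesic of $X$ avoiding $gP_i \cdot y_i$ can be compared to a geodesic of $\hat X$, transfers to give uniform contraction in $X$. Equivalent statements appear in Dru\c tu--Sapir, Osin, Sisto and Yang; cocompactness of the action is what makes the constant uniform across all cosets of all $P_i$.

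The separation clause is the geometric form of the bounded coset penetration property: for two distinct orbits in $\mathcal O$, the projection of one onto the other has diameter bounded independently of the two cosets. Equivalently, for every $A$, the deep $A$-neighborhoods of two distinct peripheral orbits have uniformly bounded intersection, and via \autoref{res: diameter intersection vs projection} this yields the required bound on projections. The main obstacle is essentially bibliographic rather than mathematical: both inputs — uniformly contracting peripheral orbits and bounded projection between distinct ones — are classical, but they appear in the literature in several equivalent languages (fine graphs, cusped spaces, asymptotically tree-graded geometry, Sisto's projection systems). The remaining work is just to extract them in the precise form demanded by \autoref{def: contracting family}; no new geometric input is needed.
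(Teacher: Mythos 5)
Your proposal is correct in outline and identifies exactly the two facts one needs — uniform contraction of peripheral sets, and uniformly bounded projection between distinct ones — but it handles the second one quite differently from the paper. For uniform contraction you and the paper are essentially aligned: the paper cites Gerasimov--Potyagailo directly (each $Y \in \mathcal Y$ is contracting, and cocompactness plus the finiteness of orbits gives the uniform constant), while you sketch the cusped-space route; either way this clause is delegated to the literature. For separation, however, the paper does \emph{not} defer to BCP or any packaged result: it gives a short self-contained argument. Starting from two far-apart points $x,y$ in $\Pi_Y(Y')$, it uses quasi-convexity (\autoref{res: contraction vs quasi-convex}) to find a long geodesic lying in a bounded neighborhood of both $Y$ and $Y'$; the cobounded $P$- and $P'$-actions then furnish, along equally spaced points of this geodesic, pairs $(h(t), h'(t)) \in P \times P'$ with $h(t)^{-1}h'(t)$ moving a basepoint a uniformly bounded amount; properness forces many coincidences $h(s)h(t)^{-1} = h'(s)h'(t)^{-1}$ by pigeonhole; and the finite-malnormality constant $M$ for the peripheral family then forces $P = P'$, a contradiction. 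This hands-on derivation is precisely what you wave at when you say the remaining issue is ``bibliographic'': you are right that the statement is morally standard, but in the generality needed here (an arbitrary proper cocompact geodesic $G$-space $X$, with $P_i$-cobounded subspaces $Y_i$ rather than point orbits) it is cleaner — and this is what the paper chose — to just prove it. Your preliminary reduction to point-orbits via bounded Hausdorff perturbation is sound and not in the paper, but it buys you little, since the paper's separation argument works directly with the cobounded subspaces and the reduction adds a perturbation lemma (close to \autoref{rem: Lipschitz proj}) that you would still have to state carefully. So: correct idea, genuinely different route on the separation clause; the cost of your route is that the key step remains a citation rather than an argument.
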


\begin{proof}
    According to Gerasimov and Potyagailo \cite[Proposition 8.5]{Gerasimov:2016uc}, each subset $Y \in \mathcal Y$ is contracting.
    Since $\mathcal Y$ consists of finitely many $G$-orbits, there is $D >0$ such that every element of $\mathcal Y$ is $D$-contracting, which proves uniform contraction.
    
 In order to prove separation, we introduce two useful numbers $M$ and $N$. Since $G$ is hyperbolic relative to $\{P_1, \dots, P_n\}$, we can find $M \in \N$ such that, for every $i,j \in \intvald 1n$ and $g \in G$, the following malnormality holds: if $P_i \cap gP_jg^{-1}$ contains more than $M$ elements, then $i = j$ and $g \in P_i$, and therefore $P_i = gP_jg^{-1}$ (this is clear from the definition of relative hyperbolicity in \cite{Bowditch:2012ga}).

    The action of $G$ on $X$ is proper and cocompact, therefore there is $N \in \N$ such that, for every $x \in X$, the set 
    \begin{equation*}
        \set{g \in G}{\dist x{gx} \leq 17D + 2C}
    \end{equation*}
    contains at most $N$ elements.
    
  Now consider distinct $Y, Y' \in \mathcal Y$, and denote by $P,P'$ the conjugates of some $P_i,P_{i'}$ respectively which act on $Y,Y'$ with   quotient of diameter at most $C$. Since we want to bound the diameter of $\Pi_Y(Y')$, we may assume that it is larger than $D$.
  
      Fix two points $x$ and $y$ in $\Pi_Y(Y')$ with $\dist xy > D$.
      Applying Lemmas~\ref{res: projection contracting set}(\ref{enu: projection contracting set - geo}), and \ref{res: contraction vs quasi-convex} to a geodesic joining points of $Y'$ projecting onto $x$ and $y$ respectively, we see that $x$ is $(9D/2)$-close to a  point $x'$ belonging to $Y'$. 
      Similarly,   $y$ is $(9D/2)$-close to some $y'\in Y'$.
     Denote by $c \colon \intval 0\ell \to X$ a geodesic from $x$ to $y$. By \autoref{res: contraction vs quasi-convex} applied with $A=9D/2$, it lies in the $6D$-neighborhood of both $Y$ and $Y'$.

    Recall that the action of $P$ (\resp $P'$) on $Y$ (\resp $Y'$) is cobounded.
    Thus, for every $t \in \intval 0\ell $, there are $h(t) \in P$ and $h'(t) \in P'$ such that 
    \begin{equation}
    \label{eqn: example periph structure}
        \dist{c(t)}{h(t)x} \leq 6D + C
        \quad \text{and} \quad
        \dist{c(t)}{h'(t)x'}   \le 6D + C.
    \end{equation}
    In particular, since $\dist x{x'}  \leq 9D/2$, we get $\dist{ h(t)^{-1}h'(t)x}x \leq 17D + 2C$.
   
    Fix $a = 12D + 2C + 1$, and 
    suppose   that the distance $\ell$
     between $x$ and $y$ is larger than $L = MNa$.
  It follows from our choice of $N$ that the map $t \mapsto h(t)^{-1}h'(t)$ takes at most $N$ values.  Thus there is a subset $I \subset a\N\cap \intval 0\ell $ with more than $M$ elements such that 
    \begin{equation*}
        h(s)h(t)^{-1} = h'(s)h'(t)^{-1} \quad \forall s,t \in I.
    \end{equation*}
   
  According to (\ref{eqn: example periph structure}), the elements 
  $h(t)$ 
  are pairwise distinct for $t\in I$.
  Fixing $s$ and varying $t$,
 we see that  $P \cap P'$ contains more than $M$ elements, and therefore $P=P'$ and $Y=Y'$, a contradiction. This shows $\diam\left(\Pi_Y(Y')\right) \leq \max\{ D, L\}$ whenever $Y\ne Y'$.
    In other words, the collection $\mathcal Y$ is separated.
\end{proof}

We now suppose that $E=G\rtimes_\varphi \Z$ acts on $X$ by quasi-isometries, with $G$ acting by isometries.

\begin{defi}[Compatible structure]\label{res: ess trivial action on peripheral struct}\label{def: compatible structure}

A peripheral structure $\mathcal Y$ on $X$  is \emph{compatible} (with the action of $E$) if it is $E$-invariant and: 
	\begin{itemize}
		\item (\emph{Projections are quasi-equivariant}). 
	 There is $D \in \R_+$ such that, for every $Y \in \mathcal Y$, $\alpha \in E$, and $x \in X$,
		\begin{equation*}
			\diam \left( \Pi_{\alpha Y}(\alpha x) \cup \alpha \Pi_Y(x) \right) \leq D.
       	\end{equation*}
		\item (\emph{Transversality}). For every $Y \in \mathcal Y$, $\alpha \in E$, and $x \in X$, if no power of $\alpha$ stabilizes $Y$, then the set 
		$ 	\bigcup_{n \in \N} \Pi_Y(\alpha^n x)$
		is bounded.
	\end{itemize}
\end{defi}

\begin{rema} 
\label{eqproj}
Recall that $G$ acts on $X$ by isometries.
	Consequently
	\begin{equation*}
		\Pi_{g\alpha Y}(g\alpha x) \cup g\alpha \Pi_Y(x)=g\left(\Pi_{\alpha Y}(\alpha x) \cup \alpha \Pi_Y(x)\right),
	\end{equation*}
	so one may replace $\alpha$ by any $g\alpha$ if convenient when proving the projections property. In particular, it suffices to check the property for the powers of a single $\alpha$ of the form $gt$.
	
	Similarly,
	\begin{equation*}
		\Pi_{\alpha (gY)}(\alpha (gx)) \cup \alpha \Pi_{gY}(gx)=\Pi_{(\alpha g)Y}\left((\alpha g) x\right) \cup (\alpha g) \Pi_Y\left(x\right),
	\end{equation*}
	hence it suffices to prove the projections property for  one $Y$ per $G$-orbit. If the number of orbits is finite, we may focus on a single $Y$  and define $D$ as the supremum of the bounds associated to each orbit.
\end{rema}

 \subsubsection{The baby example: trivial automorphisms} \label{periphb}

	\begin{lemm}
	\label{res: baby example}
	    Assume that $\varphi$ is an inner automorphism. 
	    Then any $G$-invariant peripheral structure $\mathcal Y$ is compatible with $E$.
	\end{lemm}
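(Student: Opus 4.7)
The plan is to use the fact that $\varphi$ is inner to arrange the action of $E$ on $X$ so that the $\Z$-factor acts trivially; once this is done, the three compatibility conditions in \autoref{def: compatible structure} reduce to elementary consequences of the $G$-invariance of $\mathcal{Y}$. First I would fix $g_0 \in G$ with $\varphi = \iota_{g_0}$, i.e.\ $\varphi(g) = g_0 g g_0\m$ for every $g \in G$. Setting $\tau := g_0\m t \in E$, a direct computation yields $\tau g \tau\m = g$, so $\tau$ centralizes $G$ and $E = G \times \grp{\tau}$. In this baby setting I take the $E$-action on $X$ that extends the $G$-action by letting $\tau$ act as the identity---equivalently, letting $t$ act on $X$ as the isometry $g_0$. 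Then every $\alpha = g \tau^k \in E$ acts on $X$ as the isometry $g \in G$.

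Granted this, $E$-invariance of $\mathcal{Y}$ is immediate, since $\alpha Y = g Y \in \mathcal{Y}$ by $G$-invariance. Quasi-equivariance of projections is in fact exact with $D = 0$: because $\alpha$ acts as the isometry $g$, we have $\Pi_{\alpha Y}(\alpha x) = g \Pi_Y(x) = \alpha \Pi_Y(x)$.

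The only condition requiring work is transversality. Suppose no power of $\alpha$ stabilizes $Y$; then $g^n Y \neq Y$ for all $n \neq 0$, so $\{g^n Y\}_{n\in\Z}$ is a family of pairwise distinct elements of $\mathcal{Y}$. By separation, each set $\Pi_Y(g^n Y)$ has diameter at most $D$, and by \autoref{rem: Lipschitz proj} the point $\Pi_Y(g^n x)$ lies within $d(x,Y)+4D$ of $\Pi_Y(g^n Y)$. To bound the full union $\bigcup_n \Pi_Y(g^n x)$, I would argue by contradiction: if two indices $n_1 < n_2$ gave rise to projections very far apart in $Y$, then applying the isometry $g^{-n_1}$ would exhibit a single point $g^{-n_1}x$ with two widely-separated projections onto the distinct sets $Y$ and $g^{n_2-n_1}Y$; combining uniform contraction (via \autoref{res: projection contracting set} and \autoref{res: diameter intersection vs projection}) with separation between these two elements of $\mathcal{Y}$ yields the desired contradiction.

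The main obstacle is this last step: ``contracting plus separated'' is exactly the data that makes such a transversality bound possible, but executing it cleanly requires careful use of the peripheral-structure lemmas. The other two conditions are essentially formal once one has chosen $\tau$ to act trivially, so the content of the lemma is concentrated in the transversality estimate.
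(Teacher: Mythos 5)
Your reduction to the case where every $\alpha\in E$ acts on $X$ as an element of $G$ matches the paper's opening move, and your observations that this makes $E$-invariance and quasi-equivariance of projections immediate are correct. The problem is the transversality step, which you correctly identify as the only real content but whose execution is off. Your key claim is that if $p_{n_1}=\Pi_Y(g^{n_1}x)$ and $p_{n_2}=\Pi_Y(g^{n_2}x)$ were far apart, then ``applying the isometry $g^{-n_1}$ would exhibit a single point $g^{-n_1}x$ with two widely-separated projections onto the distinct sets $Y$ and $g^{n_2-n_1}Y$.'' That is not what the translation gives you: applying $g^{-n_1}$ sends $p_{n_1},p_{n_2}$ to two points of the \emph{same} set $g^{-n_1}Y$, which are the projections of the \emph{two distinct} points $x$ and $g^{n_2-n_1}x$ onto $g^{-n_1}Y$. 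So after translating you are back to the statement you started with, merely relabelled; there is no single point with two far-apart projections onto two different peripheral sets, and hence nothing to feed into separation or \autoref{res: diameter intersection vs projection}.

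What is actually needed, and what the paper supplies, is a geodesic-overlap argument to produce a long intersection $Y^{+4D}\cap gY^{+4D}$, after which \autoref{res: diameter intersection vs projection} forces $\Pi_Y(gY)$ to be unbounded and separation gives $gY=Y$. Concretely: assuming $(p_n)$ is unbounded, take a geodesic $\gamma$ from $x$ to $g^nx$ for $n$ large; by \autoref{res: projection contracting set}(\ref{enu: projection contracting set - geo}) and \autoref{res: contraction vs quasi-convex} its subarc $\gamma_0$ between the entry and exit points of $Y^{+D}$ lies in $Y^{+4D}$ and is long (of length about $\dist{p_0}{p_n}$). The same construction applied to $g\gamma$ (which is the geodesic from $gx$ to $g^{n+1}x$) gives a long subarc $\gamma_1\subset Y^{+4D}$. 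Since $\dist{p_n}{p_{n+1}}$ is bounded (Lipschitzness of projections, \autoref{rem: Lipschitz proj}), the intersection $g\gamma_0\cap\gamma_1$ is a long subarc lying in both $Y^{+4D}$ and $gY^{+4D}$. This is the piece missing from your sketch: you cite the right lemmas, but never produce the long intersection to which they apply.
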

	
		\begin{proof}
    Since    $\varphi$ is inner,
    we can identify $E$ with $G\times\Z$ in such a way that the action of $E$ factors through the projection onto $G$. In particular, $E$ acts isometrically and projections are equivariant in the usual sense: $\Pi_{\alpha Y}(\alpha x) = \alpha \Pi_Y(x)$. 
    
In proving transversality, we may restrict to elements of $E$ belonging to $G$.
We therefore  consider $Y \in \mathcal Y$, $g \in G$ and $x\in X$. Let $D$ be associated to $\caly$ as in \autoref{def: contracting family}.

For each $n\geq 0$, we let $p_n$ be a projection of $g^nx$  on $Y$. Note that $\dist {p_n}{p_{n+1}} \leq \dist x{g x} + 4D$ because the projection on $Y$ is large-scale Lipschitz (\autoref{rem: Lipschitz proj}).   	
We assume that  the sequence $(p_n)$ is unbounded, and we claim  that  $Y^{+4D}\cap g Y^{+4D}$ is unbounded. \autoref{res: diameter intersection vs projection} will then imply that $\Pi_Y(g Y)$ also is unbounded, so that $g Y=Y$ by separation of $\caly$. The claim thus implies transversality.

To prove the claim, we consider $n$ such that $\dist{p_0}{p_n}$ is sufficiently large.
For now, we only require that $\dist{p_0}{p_n}$ and $\dist{p_1}{p_{n+1}}$ be larger than $D$.
 Let $\gamma$ be a geodesic from $x$ to $g^nx$. Let $e_0, s_0$ be   the entry and exit point of $\gamma$ in $Y^{+D}$. By \autoref{res: projection contracting set}(\ref{enu: projection contracting set - geo}), they are $2D$-close to $p_0$ and $p_n$ respectively. 
 Let $\gamma_0$ be the subarc of $\gamma$ between $e_0 $ and $s_0$.  By \autoref{res: contraction vs quasi-convex}, applied with $A=2D$, it is contained in   $Y^{+4D}$. It has length at least $\dist{p_0}{p_n}-4D$, and  the initial arc of $\gamma$ between $x$ and $e_0$ has length at most $\dist{x}{p_0}+2D$.

Next we perform the same construction, replacing  $\gamma$ by   the geodesic $g\gamma$, joining $gx$ to $g^{n+1}x$. 
We get a subarc $\gamma_1$ of $g\gamma$ contained in $Y^{+4D}$,  of length at least $\dist{p_1}{p_{n+1}}-4D$, and the arc between $gx$ and the entry point of $g\gamma$ in $Y^{+D}$ has length at most $\dist{gx}{p_1}+2D$.

Now consider $\gamma_2=g\gamma_0\cap \gamma_1$. 
It is a subarc of $g\gamma$ contained in both $ Y^{+4D}$ and $g Y^{+4D}$. Recalling that $\dist{p_n}{p_{n+1}}$ is bounded by $\dist x{gx} + 4D$, we see that the  length of $\gamma_2$, which is a lower bound for  the diameter of $Y^{+4D}\cap gY^{+4D}$, is at least $\dist{p_0}{p_n}-C$ for some number $C$ independent of $n$. The claim follows: 
if $(p_n)$ is unbounded, so is $Y^{+4D}\cap g Y^{+4D}$. 
	\end{proof}

\subsubsection{A second example: surfaces with boundary}\label{be}

	Let $\Sigma$ be a compact hyperbolic surface with geodesic boundary and $X$ its universal cover, seen as a convex subset of $\mathbb H^2$.
	The free group $G=\pi_1(\Sigma)$ is hyperbolic relative to the collection $\{ \group {g_1}, \dots,\group{g_m}\}$, with $g_1,\dots, g_m$ elements of $G$ representing the boundary geodesics $\gamma_1, \dots, \gamma_m$ of $\Sigma$.
	
	Let $\caly_i$ be the full preimage of $\gamma_i$ in $X$, and $\caly=\cup \caly_i$.
\autoref{res: example periph structure}, applied with $P_i=\group {g_i}$, implies that $\mathcal Y$ is a peripheral structure.

	We now let $f$ be a pseudo-Anosov homeomorphism of $\Sigma$ equal to the identity on the boundary. Choosing a basepoint in $\partial \Sigma$, it induces an automorphism $\varphi\in\Aut(G)$, and $E=G\rtimes_\varphi \Z$ acts on $X$:
	the elements of $G$ act by deck transformations, and the generator of $\Z$ acts as a lift of $f$.
	
	\begin{prop}\label{res: compatible structure}
		The peripheral structure  $\mathcal Y$ on $X$ is compatible with the action of $E$.
	\end{prop}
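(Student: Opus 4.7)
My plan is to verify separately the three defining conditions of a compatible peripheral structure. I represent the generator $t\in E$ by the lift $\tilde f:X\to X$ of $f$ and begin by observing that, because $f$ restricts to the identity on each boundary geodesic $\gamma_i$, the restriction $\tilde f|_Y$ of $\tilde f$ to any $Y\in\mathcal Y$ is a lift of $\mathrm{id}_{\gamma_{i(Y)}}$, hence is the restriction to $Y$ of a deck transformation, and in particular an \emph{isometry} $Y\to\tilde f(Y)\in\mathcal Y$. This observation is the backbone of the whole argument and it yields $E$-invariance of $\mathcal Y$ immediately: both $G$ and $\tilde f$ permute the elements of $\mathcal Y$.

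For the quasi-equivariance of projections, \autoref{eqproj} reduces the task to uniformly bounding the diameter of $\Pi_{\tilde f^n Y}(\tilde f^n x)\cup\tilde f^n(\Pi_Y x)$ over $n\in\mathbb Z$, $Y\in\mathcal Y$ and $x\in X$. Writing $p:=\Pi_Y(x)$, the point $\tilde f^n(p)$ lies in $\tilde f^n(Y)$ and I want to show it is within bounded distance of $q:=\Pi_{\tilde f^n Y}(\tilde f^n x)$. I would take the perpendicular geodesic $\eta$ from $x$ to $p$, so that $\eta$ meets $Y$ only at $p$, and track its image: the quasi-geodesic $\tilde f^n(\eta)$ runs from $\tilde f^n(x)$ to $\tilde f^n(p)\in\tilde f^n(Y)$. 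By the uniform $D$-contraction of elements of $\mathcal Y$ together with \autoref{res: projection contracting set}, the entry point of any genuine geodesic from $\tilde f^n(x)$ to $\tilde f^n(p)$ into the $D$-neighborhood of $\tilde f^n(Y)$ is $2D$-close to $q$, so it suffices to show that $\tilde f^n(p)$ is itself close to that entry point. For this I would use the pseudo-Anosov dynamics of $f$: its invariant measured foliations lift to $\tilde f$-invariant foliations on $X$ which meet each $Y\in\mathcal Y$ transversally in a uniformly controlled way, so that $\tilde f^n(\eta)$ approaches $\tilde f^n(Y)$ nearly orthogonally even though its total length is distorted by $\tilde f^n$.

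For transversality, if $\pi(\alpha)=0$ then $\alpha\in G$ and the conclusion is given by \autoref{res: baby example}. Otherwise, since no power of $\alpha$ stabilizes $Y$, the sets $\alpha^{-n}Y\in\mathcal Y$ are pairwise distinct; the quasi-equivariance established above lets me rewrite $\Pi_Y(\alpha^n x)$ as essentially $\alpha^n\Pi_{\alpha^{-n}Y}(x)$. The separation axiom of $\mathcal Y$ then forces $\Pi_{\alpha^{-n}Y}(x)$ to stay bounded (by the same mechanism as in the baby example), whence $\Pi_Y(\alpha^n x)$ is bounded as well.

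The principal obstacle is the quasi-equivariance step. The bi-Lipschitz constants of $\tilde f^n$ grow with $n$, so an argument tracking the image of $\eta$ only through those constants fails; the resolution genuinely requires the additional structure coming from the invariant foliations of $f$ to keep the angle of approach of $\tilde f^n(\eta)$ to $\tilde f^n(Y)$ bounded uniformly in $n$.
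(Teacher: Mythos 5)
Your preliminary observations are correct (the reduction via \autoref{eqproj}, the isometry of $\tilde f$ on any $Y\in\mathcal Y$, and the recognition that quasi-equivariance is the crux and that the invariant foliations must come in), but the two hard steps each have a genuine gap. For quasi-equivariance, tracking the perpendicular $\eta$ from $x$ to $p=\Pi_Y(x)$ under $\tilde f^n$ and asking that $\tilde f^n(\eta)$ ``approach $\tilde f^n(Y)$ nearly orthogonally'' is not something the foliation gives you: leaves of a measured foliation do not control the angle at which an arbitrary perpendicular meets a boundary geodesic, and no such angle estimate is available or needed. The paper's actual mechanism is different. After using \autoref{eqproj} to arrange $\tilde f|_Y=\mathrm{id}$, take a separatrix $\ell$ of the lifted invariant foliation emanating from a singularity $q\in Y$. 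Then $\ell$ is $\tilde f$-invariant, quasi-geodesic, and its ideal endpoint is \emph{not} an endpoint of $Y$. If $g$ generates $\Stab(Y)$, the region $U$ bounded by $\ell$, $g\ell$ and the arc of $Y$ from $q$ to $gq$ is an $\tilde f$-invariant fundamental domain for $\langle g\rangle$. Because the two quasi-geodesics bounding $U$ share no endpoint at infinity with $Y$, the projection of $U$ onto $Y$ is bounded; and since the entire $\tilde f$-orbit of any given $x$ stays inside a single translate $g^pU$, one gets the uniform bound on $\Pi_Y(\tilde f^nx)\cup\Pi_Y(x)$. The point is global confinement in an invariant fundamental domain, not local control of angles of incidence.

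For transversality with $\pi(\alpha)\neq 0$, your rewriting $\Pi_Y(\alpha^nx)\approx\alpha^n\Pi_{\alpha^{-n}Y}(x)$ is legitimate, but the conclusion ``separation forces $\Pi_{\alpha^{-n}Y}(x)$ to stay bounded (by the same mechanism as in the baby example), whence $\Pi_Y(\alpha^nx)$ is bounded'' fails on two counts. The argument in \autoref{res: baby example} relies crucially on the iterated element acting by \emph{isometries}, so that a geodesic and its translate can be compared edge-for-edge; here $\alpha^n$ is only a quasi-isometry with constants growing in $n$, and that comparison breaks. Moreover, even granting some control on $\Pi_{\alpha^{-n}Y}(x)$, pushing forward by $\alpha^n$ does not transport a bound unless you have a uniform reference point on $\alpha^{-n}Y$ tied to the $\alpha$-orbit, and producing one is exactly the problem you started with. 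What the paper does instead is dynamical: by Fathi's theorem, after passing to a power the sequence $\tilde f_\alpha^n(x)$ converges to a point $\xi\in\partial X$ fixed by $\tilde f_\alpha$; since no power of $\alpha$ stabilizes $Y$, this $\xi$ cannot be an endpoint of $Y$ (two components of $\partial X$ sharing an ideal endpoint coincide), and then elementary hyperbolic geometry bounds $\bigcup_n\Pi_Y(\alpha^nx)$. Your outline contains no substitute for this convergence input, and I do not see how to close the argument without it.
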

	
	\begin{proof}
		We start with the quasi-equivariance property for projections.
		 Since $f$ equals the identity on $\partial\Sigma$ and $\mathcal Y/G$ is finite ($\partial\Sigma$ has finitely many components), \autoref{eqproj}  allows us to fix $Y$ (a component of $\partial X$) and 
		   assume that $\alpha$ is represented by a homeomorphism $\tilde f$ equal to the identity on $Y$. We then have to bound the diameter of 
		$ \Pi_{  Y}(\tilde f^n (x)) \cup   \Pi_Y(x)$, uniformly  for $x\in X$  and $n\in\Z$.

		Let $\calf\subset X$ be the preimage of one of the  $f$-invariant measured foliations. 
		Let $\ell$ be an infinite  half-leaf (separatrix) originating at a singularity $q$ of $\calf$ contained in $Y$. It is $\tilde f$-invariant, quasi-geodesic (this is the modern way of stating Lemma 1 of \cite{Levitt:1983wg}), and the point at infinity of  $\ell$ is not a point at infinity of $Y$
		(in terms of the geodesic lamination  $\mathcal L$ associated to $\calf$, the point at infinity of $\ell$ is a cusp of the component of $X\setminus \mathcal L$ containing $Y$).

		Let $g$ be a generator of the stabilizer of $Y$ in $\pi_1(\Sigma)$. 
	All half-leaves $g^p {\ell}$ (with $p\in\mathbb{Z}$) are $\tilde f$-invariant. 
	In particular, $\ell, g\ell$ and the arc of $Y$ between $q$ and $g q$ bound an $\tilde f$-invariant fundamental domain $U$ for the action of $\grp g$ on $X$. 

Being quasigeodesics, $\ell $ and $g\ell$ are at a bounded distance from   actual geodesics. These geodesics  have no point at infinity in common with $Y$, so   the projection  of $U$ on $Y$ has finite length. 
	Given $x\in X$, its whole $\tilde f$-orbit  is contained in some $g^p(U)$, and the projections property  follows.

		We now consider transversality.
		Let $Y \in \mathcal Y$, $\alpha \in E$ and $x \in X$. 
		Let $\tilde f_\alpha$ be a 
		homeomorphism of $X$ representing $\alpha$. 
		The result is clear if $x$ is $\tilde f_\alpha$-periodic. Otherwise,  after possibly raising $\tilde f_\alpha$ to some power, the sequence $\tilde f_\alpha^n(x)$   converges as $n\to+\infty$ to a point $\xi \in \partial X$ fixed by (the extension) of $\tilde f_\alpha$  by \cite{Fathi:1983dy}.
		
 Assuming that 
		no power of $\alpha$ stabilizes $Y$, the point $\xi $ cannot be a point at infinity of $Y$:
		  since some $\tilde f_\alpha^k$ fixes $\xi$, it would mean that $Y$ and $\alpha^k Y$ (two boundary components of $X$) share an endpoint at infinity, hence are equal, contradicting our assumption. 
It then follows from hyperbolic geometry that the set
			$ 	\bigcup_{n \in \N} \Pi_Y(\alpha^n x)$
			is bounded.
	\end{proof}

\subsection{Statement of the result} \label{stat}

As explained in the introductory paragraph of this section, 
we will let $E$  act on a suitable metric space $X$. Topologically, $X$ is given by the Scott-Wall construction \cite{Scott:1979uk}. 

Given $\varphi\in\Aut(G)$,   let   $E = G \rtimes_\varphi \Z$ and the  refined JSJ tree $T$ for $\varphi$ be as in  \autoref{pjsj3}. 
Let $\Gamma=T/G$ be the graph of groups associated to $T$, with vertex groups $G_{\tt v}$
and edge groups $G_{\tt e}$. As a general convention throughout this section, we will use typewriter letters $\tt v,\tt e$ for vertices and edges of the quotient graph $\Gamma$, and italic letters $v,e$ for vertices and edges in $T$.

Recall that Scott-Wall define a CW-complex $M$, which is a graph of spaces and has fundamental group $G$. It consists of vertex spaces $M_{\tt v}$ (one per vertex of $\Gamma$, with $\pi_1 M_{\tt v}\simeq G_{\tt v}$) joined by edge  spaces of the form $M_{\tt e}\times [0,1]$ (one for each non-oriented edge $\tt e$ of $\Gamma$, with $\pi_1 M_{\tt e}\simeq G_{\tt e}$), with  $M_{\tt e}\times\{0\}$ and $M_{\tt e}\times\{1\}$ attached to the relevant vertex space.

The universal covering $X$ of $M$ is a tree of spaces, it is equipped with an action of $G$ by deck transformations and an equivariant projection $p:X\to T$.

We will elaborate on the Scott-Wall construction in two ways, carefully choosing the spaces $M_{\tt v}$ and $M_{\tt e}$ to suit our purposes (as a minor technical inconvenience, we will not quite have $\pi_1 M_{\tt v}\simeq G_{\tt v}$). First we extend the action of $G$ on $X$ to an action of $E$  such that the projection $p:X\to T$ is $E$-equivariant (this amounts to representing $\Phi\in \Out(G)$ by a homeomorphism of $M$). 

Second, we define a $G$-invariant metric on $X$.
This metric will greatly simplify our treatment of local-to-global phenomena, hence allowing us to control how the length of curves grows under iteration from the data given by the vertex spaces.

This will be summarized in \autoref{MSW}, 
which is the goal of this section. 
 It follows from \autoref {puiss2} that we are free to replace $\varphi$ by a power when proving \autoref {palgeo2}. This allows us to assume that $\varphi$ is \rotationless{}  (see   \autoref{rtl}); in particular, it acts as the identity on $T/G$.

The following notation will be used throughout, with $p:X\to T$ the projection.

\begin{nota} \label{XY}
	Given an edge $e$ of $T$, we let $Y_e = p^{-1}(m_e)$,  where   $m_e$ is the midpoint of $e$ (all edges have length $1$).
	If $v$ is a vertex of $T$, we let 
	\begin{equation*}
		X_v = p^{-1} \left(\bar B(v,1/2)\right)
		\quad \text{and} \quad
		\mathcal Y_v = \set{Y_e}{e\ \text{edge of $T$ containing}\ v},
	\end{equation*}
    with $\bar B$ denoting the closed ball.
	We view the family $\mathcal Y_v$ as the set of boundary components  of $X_v$. 

If $v$ is a vertex of 
$T$, we denote by $E_v$ the stabilizer of $v$ for the action of $E$ (whereas $G_v$ is the stabilizer for the action of $G$). Note that $E_v$ is a semi-direct product $G_v \rtimes \Z$, with $\Z$ generated by any $gt$ which fixes $v$.	
	\end{nota}

The next definition is a geometric analogue of \autoref{de:trivial}.

\begin{defi}
\label{def: ess trivial action on peripheral struct}
Let $\mathcal Z$ be a family of subsets of $X$. The action of $E$ on $X$ is \emph{essentially trivial} in restriction to  $\mathcal Z$ if, for every $Z \in \mathcal Z$ and  every $\alpha \in E$, there exists $g \in G$ such that $\alpha$ agrees with $g$ when restricted to $Z$.
\end{defi}

\begin{theo}\label{MSW}
Let $G$ be a one-ended toral relatively hyperbolic group, and let $\varphi\in\Aut(G)$ be \rotationless{}.
Let $T$ be the refined JSJ tree for $\varphi$, with vertex set $V = V_0 \sqcup V_1$ (see \autoref{pjsj3}).

 There exist a proper geodesic metric space $X$  with an action of $E$, and a projection $p\colon X\onto T$, with the following properties:
 
\begin{enumerate}
 
 \item The action of $G$ on $X$ is proper, cocompact, by isometries, and the action of $E$ on $X$ is by quasi-isometries. 
 \item The projection $p$  is $E$-equivariant and Lipschitz.
 \item\label{enu: MSW - vertex}
 Point preimages of $p$ are connected.  For every vertex $v$,  the space $X_v$  is convex in $X$.
 \item\label{enu: MSW - edge} The action of $t$ on $X$, hence also that of every $\alpha\in E$, 
  is essentially trivial 	in restriction to each $Y_e$;
  in particular,  the restriction of  $\alpha$ to   $Y_e$ is an isometry.
\item  \label{enu: MSW - compatible} If $v\in V_0$,  the collection $\mathcal Y_v$ is a compatible peripheral structure on $X_v$, equipped with  the action   of $E_v=G_v\rtimes\Z$ (see Definitions \ref {def: contracting family} and \ref{def: compatible structure}).
 \end{enumerate} 
\end{theo}

Recall that $V_0$ is the set of non-abelian vertices of $T$. Convexity ensures that $X_v$, equipped with  the restriction of the  distance function, is geodesic. 

The following definition will be useful  in \autoref{spec} to state a general combination theorem.

\begin{defi}[Metric decomposition]\label{def: metric decomposition}
Let $G$ be a finitely generated  group and $\varphi\in\Aut(G)$.
A \emph{$\varphi$-adapted metric decomposition} of $G$
 is a map $p:X\twoheadrightarrow T$, where $X$ is a geodesic metric space with an action of $E=G\rtimes_\varphi\Z$,
 and $T$ is a bipartite $E$-tree with vertex set $V=V_0\sqcup  V_1$ satisfying all conclusions (1)--(5) from \autoref{MSW}.     
\end{defi}

 Note that the action of $\varphi$ on $T/G$ has to be trivial. 
Moreover the action of $G$ on the tree $T$ has to be acylindrical.

\begin{rema}[Acylindricity of $T$]
\label{res: pt stabilizer ray}
If $e,e'$ are distinct edges of $T$ containing a vertex   $v\in V_0$, then $H=G_e\cap G_{e'}\inc G$ preserves the projection of $Y_e$ to $Y_{e'}$, which has diameter bounded by some $D$ because   the collection $\mathcal Y_v$ is separated, so $H$ is finite of bounded order by properness and cocompactness of the action of $G$ on $X$. Thus there exists $C$ such that, 
for the action of $G$, segments of length at least 3 have stabilizer of order at most $C$ (compare \autoref{pjsj2}(\ref{enu: MSW - edge})).
 \end{rema}

\begin{rema}\label{rem: metric decomposition free splitting}
In \autoref{def: metric decomposition} the tree $T$ is not necessarily a JSJ tree. For instance, if $T$ is the Bass-Serre tree of any $\phi$-invariant free splitting of $G$, one can easily produce a $\phi$-adapted metric decomposition $p:X\twoheadrightarrow T$ of $G$.
\end{rema}

The remainder of this section is devoted to the proof of \autoref{MSW}.

\subsection{Constructing a space}  \label{cas}

To prove \autoref{MSW}, we construct a graph of spaces $M$ using the graph of groups $\Gamma$ associated to the refined JSJ tree $T$. We first define   vertex and edge spaces $M_{\tt v},M_{\tt e}$ 
 (which we call \emph{local spaces}). Every  space $M_{\tt e}$ will be a torus, with   fundamental group $G_{\tt e}$. 
The  fundamental group of $M_{\tt v}$ will be $G_{\tt v}$, except when $\tt v$ is an R-vertex (as a consequence, $\pi_1M$ will not be equal to $G$  but only map onto it). 

We also equip each space $M_{\tt v},M_{\tt e}$ with a length structure (it is a Riemannian metric except at R-vertices), and we define a locally isometric  attaching map from $M_{\tt e}$ to $M_{\tt v}$ whenever $\tt v$ is an endpoint of $\tt e$.

Recall that $\Gamma$ is   bipartite with  
  vertex set  $\tt V = \tt V_0 \sqcup \tt V_1$, where $\tt V_0$ is the set of non-abelian vertices   and $\tt V_1$ is the set of  abelian vertices.
We orient edges   from $\tt V_0$ to $\tt V_1$.
Each edge $\tt e$ 
comes with two attaching maps $\alpha_{\tt e} \colon G_{\tt e} \into G_{o(\tt e)}$ and $\omega_{\tt e} \colon G_{\tt e} \into G_{t(\tt e)}$, with $o(\tt e)\in \tt V_0 $ and $t(\tt e)\in \tt V_1 $.

In order to control the metric, we perform the construction in the following order: abelian vertices, edges, non-abelian vertices. To handle R-vertices, we  need a construction due to Groves.
 
 \begin{nota}
 	If $P$ is a free abelian group of rank $r$, the tensor product $Z = P \otimes \R$ is isomorphic to the vector space $\R^r$.
	Note that, if $Q$ is a subgroup of $P$ (hence a free abelian group of rank $s \leq r$), then the embedding $Q \into P$ induces a canonical embedding $Q \otimes \R \into P \otimes \R$. 
	Hence the advantage of this notation is that it remembers the relation between a group and its subgroups.
 \end{nota}

\begin{prop}[{\cite[Lemmas~4.9 and~4.10]{Groves:2009bj}}]
\label{res: groves toral}
	Let $G$ be a toral relatively hyperbolic group, with  
	free abelian parabolic subgroups $\{P_1, \dots, P_n\}$.
	Let $Z_i = P_i \otimes \R$, endowed with the metric induced by some scalar product, so that the natural action of $P_i$ on $Z_i$ is by isometries.
	
	There exists a geodesic metric space $Z$ with the following properties:
	\begin{enumerate}
		\item The group $G$ acts on $Z$ freely, isometrically, properly, cocompactly. 
		\item For every $i \in \intvald 1n$, there is a $P_i$-equivariant isometric embedding $\kappa_i:Z_i \into Z$ with convex image.
		\item If $Z_i\cap gZ_j\ne \es $ with $g\in G$,  then $i=j$ and $g\in P_i$. \qed 
	\end{enumerate}
\end{prop}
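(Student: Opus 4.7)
The strategy is to equivariantly replace each $G$-conjugate of a parabolic subgroup by a copy of the relevant Euclidean flat, glued to a cocompact model of $G$. Since $G$ is toral relatively hyperbolic, it is finitely presented, so we may begin with the Cayley $2$-complex $K$ of a finite presentation, equipped with a $G$-invariant piecewise Euclidean geodesic metric for which the action of $G$ is free, isometric, proper and cocompact. Fix $x_0 \in K$. For each $i$, recall that the map $p \mapsto p \otimes 1$ realizes $P_i$ as a cocompact lattice in the Euclidean space $Z_i$, acting by translations.

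For each coset $gP_i \in G/P_i$ take a fresh copy $Z_{gP_i}$ of $Z_i$, and glue it to $K$ by a $P_i$-equivariant mapping cylinder identifying the lattice point $p \in P_i \subset Z_{gP_i}$ with the vertex $gp \cdot x_0 \in K$. Call the resulting space $Z$, equipped with the path metric obtained by prescribing the given Euclidean metric on each flat and the original (suitably rescaled) piecewise Euclidean metric on $K$. The group $G$ acts on $Z$ by permuting copies, with each $P_i$ acting by translations on $Z_{gP_i}$; this action inherits freeness, properness, and cocompactness from the $G$-action on $K$, together with the finiteness of the number of $G$-orbits of flats (there are exactly $n$, one per parabolic class, each with quotient a torus $Z_i/P_i$). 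This establishes item~(1), and the $P_i$-equivariant isometric embedding $\kappa_i \colon Z_i \into Z$ of item~(2) is built into the construction.

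The crux is the convexity of each $\kappa_i(Z_i)$: any path in $Z$ joining two points $x, y \in Z_i$ and exiting $Z_i$ must transit through two lattice points $p, q \in P_i$, contributing at least $d_K(p,q)$ along its $K$-portion. One must arrange the rescaling of $K$ and the lengths of the cylinders so that this quantity dominates the Euclidean distance $\lVert p - q \rVert$ for every $p, q \in P_i$, whence the triangle inequality in $Z_i$ forces the total length to be at least $\lVert x - y \rVert$. This metric calibration is the main obstacle: obtaining \emph{exact} (and not merely quasi-) convexity is delicate, and uses in an essential way that the induced map $P_i \into K$ is a quasi-isometric embedding onto its image. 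It is precisely this step that Groves carries out in \cite{Groves:2009bj}. Item~(3) is then easy: by construction, distinct cosets produce disjoint flats in $Z$, so $Z_i \cap gZ_j \neq \emptyset$ forces $P_i = gP_j$ as cosets, whence $i = j$ and $g \in P_i$.
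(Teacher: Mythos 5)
The paper gives no proof of this proposition: the statement is quoted directly from Groves (\cite[Lemmas~4.9 and~4.10]{Groves:2009bj}), and the only original content is the remark that follows it, namely that Groves formally assumes the scalar product on $P_i \otimes \R$ makes some basis of $P_i$ orthonormal, whereas the proposition here permits an arbitrary scalar product, with the assertion that Groves's construction works verbatim.

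Your sketch is a plausible outline of the kind of construction involved, and you correctly isolate exact convexity of the embedded flats as the nontrivial step; but since you, too, defer that step to Groves, you are not offering an independent proof any more than the paper does. Two things are worth flagging. First, your ``calibration'' argument only compares the $K$-portion of a detour to the Euclidean distance in $Z_i$, but a path leaving and re-entering $Z_i$ may also cut through other flats $Z_{g'P_j}$, which carry their own Euclidean metrics; ruling out shortcuts through these requires the coarse separation of distinct peripheral cosets (one of the defining features of relative hyperbolicity), which you never invoke. Second, and more to the point, you do not address why the construction is insensitive to the choice of scalar product on each $Z_i$---this is precisely the one subtlety the paper's remark singles out, and your calibration step (rescale $K$ so that $d_K$ dominates the Euclidean norm on $Z_i$) tacitly depends on which scalar product is in play, so you should say explicitly why the choice does not matter.
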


\begin{rema}\
    \begin{itemize}
         \item 
     	Formally, Groves's statement assumes that $P_i \otimes \R$ is endowed with a Euclidean metric for which some basis of $P_i$ is orthonormal, but
	  the construction works verbatim without this assumption.
	\item Since $G$ acts freely and properly, the quotient map $Z\to Z/G$ is a regular covering with group $G$ (unfortunately, $Z$ is not simply connected). By the third property, the maps $\kappa_i$ induce $\pi_1$-injective  embeddings of the tori $T_i=Z_i/P_i$ into $Z/G$ with disjoint images. \qedhere
    \end{itemize}
\end{rema}

\paragraph{Local spaces.}

We can now define the spaces $M_{\tt v},M_{\tt e}$.

\medskip

$\bullet$ Let $\tt v \in \tt V_1$ be an abelian vertex of $\Gamma$.
	By assumption, $G_{\tt v}$ is a free abelian group of the form $G_{\tt v} = \Z^k$ for some $k \in \N$.
	We endow the space $G_{\tt v} \otimes \R$ with the canonical Euclidean metric.
	The quotient $\mathbb T_{\tt v} = (G_{\tt v} \otimes \R) / G_{\tt v}$ is a $k$-dimensional torus.
	
  We define the star   $\str {\tt v}$ as the closed ball of radius $\frac{1}{2}$ centered at $\tt v$ in $\Gamma$; we view it as a union of arcs $[{\tt v},m_{\tt e}]$ of length $\frac12$,
 with $\tt e$ any edge  starting at $\tt v$ and
  $m_{\tt e}$ its midpoint.
	We let $M_{\tt v} = \mathbb T_{\tt v} \times \str {\tt v}$, 
	  endowed   with the product metric.
	Introducing the star of $\tt v$ will be needed only to construct the action of $E$ on $X$.
	
	\medskip 

$\bullet$	Let $\tt e$ be an edge of $\Gamma$ and ${\tt v} = t (\tt e)\in \tt V_1$ its  abelian endpoint. 
	The homomorphism $\omega_{\tt e} \colon G_{\tt e} \into G_{\tt v}$ induces an $\omega_{\tt e}$-equivariant embedding $G_{\tt e} \otimes \R \into G_{\tt v} \otimes \R$.
	Identifying $G_{\tt e} \otimes \R$ with its image in $G_{\tt v} \otimes \R$ provides a metric structure on $G_{\tt e} \otimes \R$; it is induced by a scalar product.
We define $M_{\tt e} = (G_{\tt e} \otimes \R) / G_{\tt e}$, a flat   torus. 
	
	We now define an attaching map 	$\iota^\omega_{\tt e}$ from $M_{\tt e}$ to $M_{\tt v} = \mathbb T_{\tt v} \times \str {\tt v}$ by sending  $x \in M_{\tt e}$ to $(\iota(x),m_{\tt e})$, with $\iota \colon M_{\tt e} \to \mathbb T_{\tt v}$ induced 
by the embedding $G_{\tt e} \otimes \R \into G_{\tt v} \otimes \R$ and $m_{\tt e}\in\str {\tt v}$ as defined above.  Note that the map $\iota^\omega_{\tt e}$ may fail to  be injective.
	It is locally an isometry, though.
	
	\medskip
	$\bullet$ Let $\tt v$ be a pA-vertex.
	We let $M_{\tt v}$ be the underlying (topological) surface $\Sigma_{\tt v}$, with   fundamental group   $G_{\tt v}$. We now define a metric on $\Sigma_{\tt v}$.

	If $\tt e$ is an edge of $\Gamma$ starting at $\tt v$, then $\alpha_{\tt e}(G_{\tt e})$ is a maximal cyclic subgroup of $G_{\tt v}$ corresponding to a boundary component.
	In the previous step we assigned a metric to $M_{\tt e} \simeq S^1$.
	Let $\ell_{\tt e}$ be the length of this circle. 
	We fix a hyperbolic metric on $M_{\tt v}$ such that the boundary curve associated to each edge $\tt e$ starting at $\tt v$ is totally geodesic and has length $\ell_{\tt e}$  \cite[Section~10.6.3]{Farb:2012ws}.
	We define an isometric attaching map $\iota^\alpha_{\tt e} \colon M_{\tt e} \to M_{\tt v}$ by identifying $M_{\tt e}$ with the corresponding boundary curve (with the orientation prescribed by the embedding $\alpha_{\tt e}:G_{\tt e}\to G_{\tt v}$).

    \medskip
	$\bullet$ Let $\tt v$ be an R-vertex. As pointed out in Lemma 3.8 of \cite{Guirardel:2015fi}, the group $G_{\tt v}$ is hyperbolic relative to a family of free abelian parabolic subgroups $P_1,\dots, P_n$;  this family contains
  the incident edge groups $P_1,\dots, P_k$ (recall that they are pairwise not conjugate in $G_{\tt v}$). When $\tt v$ is QH, the $P_i$'s are the incident edge groups and we can take   $M_{\tt v}=\Sigma_{\tt v}$   as in the pA case; we are now concerned  with the rigid vertices.

For $i\leq k$ the group $P_i$ is an edge group and  the space  $P_i\otimes \R$ has been assigned a metric (coming from the  inclusion of $P_i$ into an abelian vertex group). We use these metrics (and arbitrary metrics for $i>k$) to apply \autoref{res: groves toral}. 
	   We get   a geodesic metric space $Z_{\tt v}$ endowed with a proper  cocompact action of $G_{\tt v}$ such that, for every edge $\tt e$ starting at $\tt v$, we have an $\alpha_{\tt e}$-equivariant isometric embedding $G_{\tt e} \otimes \R \into Z_{\tt v}$ with convex image.
		Let $M_{\tt v} = Z_{\tt v} / G_{\tt v}$, endowed with the quotient metric. For each edge $\tt e $ starting at $\tt v$, 
	we get an    isometric embedding $\iota_{\tt e}^\alpha \colon M_{\tt e} \into M_{\tt v}$.

	The fundamental group of $M_{\tt v}$ is not $G_{\tt v}$, but as mentioned above the projection $Z_{\tt v} \onto M_{\tt v}$ is a regular covering map whose deck transformation group is $G_{\tt v}$, so there is an epimorphism $\pi_{\tt v}:\pi_1(M_{\tt v})\onto G_{\tt v}$. Note that it is injective on the fundamental groups of the tori $\iota_{\tt e}^\alpha (M_{\tt e})$.

\paragraph{$M$ and $X$ as topological spaces.}
We now define a space $M$ by combining the spaces $M_{\tt v},M_{\tt e}$ into a graph of spaces based on $\Gamma$, as in \cite{Scott:1979uk}. 
Denoting by $\tt V=\tt V_0\sqcup\tt V_1$ the set of vertices of $\Gamma$, and by $\tt E$ the set of edges of  $\Gamma$, oriented from $\tt V_0$ to $\tt V_1$ as above, $M$ is the quotient of 
\begin{equation*}
	\left( \bigsqcup_{\tt v \in \tt V} M_{\tt v} \right) \sqcup \left( \bigsqcup_{\tt e \in\tt E} M_{\tt e}\times \intval 01 \right)
\end{equation*}
by the identifications prescribed by the attaching maps 
$\iota^\alpha_{\tt e}$ and $ \iota_{\tt e}^\omega$: for each $\tt e\in\tt E$ and $x \in M_{\tt e}$, we identify $(x,0)$ with $\iota^\alpha_{\tt e}(x) \in M_{o(\tt e)}$ and $(x,1)$ with $\iota_{\tt e}^\omega(x) \in M_{t(\tt e)}$.

Note that the space $M$ comes with a natural projection $M \onto \Gamma$:  for every vertex $\tt v \in \tt V$ it maps the subspace $M_{\tt v}$ to $\tt v$, and for every edge $\tt e \in\tt  E$ it 
projects $M_{\tt e}  \times \intval 01$ to $\intval 01$, which we view as a parametrization of the oriented edge $\tt e$ of $\Gamma$.

The spaces $Z_{\tt v}$  provided by  \autoref{res: groves toral}
are not simply connected, so the fundamental group of $M$ is not isomorphic to $G$. 
It is the fundamental group of a graph of groups based on $\Gamma$, with the same vertex and edge groups as $\Gamma$ except that, for $\tt v$ an R-vertex, the vertex group is $\pi_1(M_{\tt v})$ rather than $G_{\tt v}$. 

Now consider the normal subgroup $N$ of $\pi_1(M)$ generated by the kernels of the maps $\pi_{\tt v}:\pi_1(M_{\tt v})\onto G_{\tt v}$, for $\tt v$ an R-vertex. Since $\pi_{\tt v}$ is injective on incident edge groups, the quotient $\pi_1(M)/N$ is the fundamental group of the graph of groups where $\pi_1(M_{\tt v})$ has been replaced by $G_{\tt v}$ (the quotient map is a composition of  vertex morphisms in the sense of   \cite{Dunwoody}). The embeddings of the incident edge groups are the same as in $\Gamma$, so $\pi_1(M)/N$ is  isomorphic to $G$. 

We define $X$ as the  covering space of $M$ associated to $N$, with the action of $G$ by deck transformations. Passing to the cover, the projection $M \onto \Gamma$ lifts to  a $G$-equivariant projection $p \colon X \to T$, where $T$ is the refined JSJ tree.

\paragraph{Extending the action to $E$.}

 Our next goal is to extend the action of $G$ on $X$ to an action of   $E$. To do that, we  represent  $\varphi$ by a homeomorphism $f$ of $M$, in the following sense: the induced automorphism of $\pi_1(M)$ descends to an automorphism of $G$ belonging to  the same outer class as $\varphi$. Once this is done, we choose a lift $\tilde f$ of $f$ to $X$ and we let the element $t$ of $E$ act as $\tilde f$. Since the action of $G$ on $X$ is proper and cocompact, $E$ acts by quasi-isometries.
Moreover, the projection $p \colon X \to T$ is $E$-equivariant.

 The homeomorphism $f$ will be the identity on edge spaces, ensuring that the action of $E$ on $X$ is essentially trivial in restriction to the spaces $Y_e$, as required in (\ref{enu: MSW - edge}) of \autoref{MSW}.

  We assume that $\varphi$ is \rotationless, so as in 
   \autoref{pjsj3} it acts trivially on $T/G$, on edge stabilizers,   on stabilizers of R-vertices, and as a pseudo-Anosov homeomorphism on pA-vertices. The homeomorphisms of $M$ that we shall construct will all be the identity on edge spaces and vertex spaces associated to R-vertices (in particular, they will induce automorphisms of $G$).  We now define $f$ on  $M_{\tt v}$, for $\tt v$ a pA or abelian  vertex.

If $\tt v$ is a pA vertex, then $M_{\tt v}$ is a surface $\Sigma_{\tt v}$. 
The  automorphism $\varphi$ acts on $G_{\tt v}=\pi_1(\Sigma_{\tt v})$ as a pseudo-Anosov homeomorphism sending each boundary component to itself in an orientation-preserving way, and we define $f$ on $M_{\tt v}$ as such a homeomorphism, making sure that it is the identity on   $\partial \Sigma_{\tt v}$ so that it may be extended as the identity to the annuli attached to $\Sigma_{\tt v}$.

If $\tt v$ is an abelian  vertex, then $G_{\tt v} \simeq \Z^k$ is free abelian and $ \varphi_{|G_{\tt v}}$ can be represented by a matrix $A \in {\rm GL}(k,\Z)$.
	In particular it induces an affine homeomorphism $h$ of the torus $\mathbb T_{\tt v} = (G_{\tt v} \otimes \R)/ G_{\tt v}$, 
which 	we extend as $h\times{\rm id}$  to  $M_{\tt v}=\mathbb T_{\tt v} \times \str {\tt v}$.
	The assumptions on $\varphi$ imply that $h$ is the identity on the subtori where   edge spaces are attached.

We thus obtain a homeomorphism $f$ of $M$ which induces an automorphism   of $\pi_1(M)$, and also an automorphism $\psi$ of its quotient $G$.  Unfortunately, it does not have to be in the same outer class as $\varphi$: we only ensured that $\psi$ has the same action as $\varphi$ on vertex groups.

As in Section 4 of \cite{Guirardel:2015fi}, $\psi\varphi\m$ belongs to the group  of twists, and more precisely is a product of twists near   vertices in $\tt V_1$ because edge groups of $\Gamma$ incident to a vertex   $\tt v\in \tt V_0$ are maximal abelian subgroups  of $G_{\tt v}$ (see Corollary 4.4 of \cite{Guirardel:2015fi} and its proof).

It therefore suffices to represent any twist near a vertex   $\tt v\in \tt V_1$ by a homeomorphism $h$ of $M$. Such a twist is determined by an incident edge $\tt e$ and an element   $u\in G_{\tt v}$. The homeomorphism $h$ will be supported in the subspace 
$\mathbb T_{\tt v} \times[{\tt v},m_{\tt e}]$ of $\mathbb T_{\tt v} \times \str {\tt v} $ associated to $\tt e$. 

Identifying the arc
$ [{\tt v},m_{\tt e}]$
  with the interval $\intval 0{1/2}$, we 
define a bijection $H_{\tt e}$ of  $(G_{\tt v} \otimes \R) \times \  [{\tt v},m_{\tt e}]$ by sending $(x,s)$ to $(x + 2su, s)$. 
Passing to the quotient, $H_{\tt e}$ induces a homeomorphism $h_{\tt e}$ of $\mathbb T_{\tt v} \times [{\tt v},m_{\tt e}]$   
which pointwise fixes $\mathbb T_{\tt v} \times \{{\tt v},m_{\tt e}\}$  (when $\mathbb T_{\tt v}$ is a circle, $h_{\tt e}$ is a usual Dehn twist on an annulus). We extend it by the identity to the complement of $\mathbb T_{\tt v} \times [{\tt v},m_{\tt e}]$ in $M$.

\paragraph{The metric structure on $X$.}

We now define a $G$-invariant metric on $X$. The space $X$ is a tree of spaces, whose vertex spaces $Z_v$ are covering spaces of the vertex spaces of $M$. We lift the length structure defined above  to them. 

 We obtain the product  of a Euclidean space $ G_v \otimes \R$ by a graph $\str v$ if $v$ is abelian,  the universal covering of  a hyperbolic surface  $\Sigma_{\tt v}$ (a convex subspace of the hyperbolic plane bounded by disjoint geodesics) if $v$ is pA,  a  space $Z_v$ provided by \autoref{res: groves toral} if $v$ is an R-vertex. 

The edge spaces are of the form $ (G_e \otimes \R)\times\intval 01$. We have defined a Euclidean metric on $ G_e \otimes \R$, and we equip $ (G_e \otimes \R)\times\intval 01$ with the product metric.
We defined the attaching maps $\iota^\alpha_{\tt e},   \iota^\omega_{\tt e}$ in such a way that they lift to isometric embeddings of $ (G_e \otimes \R)\times \{0\}$ and $ (G_e \otimes \R)\times\{1\}$ into the vertex spaces, with   convex images. 

This allows us to patch the length structures defined on each vertex and edge space together, so as to obtain a global length structure on $X$. 
 Gluing the vertex and edge spaces  successively, and applying inductively \cite[Chapter~I, Lemma~5.24]{Bridson:1999ky} to the tree of spaces,
we see that the length structure defines a genuine distance function on $X$ (not a pseudo-distance) making $X$ a geodesic metric space. The edge and vertex spaces are convex, hence so are the spaces $X_v=p^{-1}(\bar B(v,1/2))$ appearing in \autoref{MSW}.

\paragraph{Local peripheral structure.}

The metric space $X$ just constructed clearly satisfies the first three conditions of \autoref{MSW}. We have established essential triviality, there remains to show that $\mathcal Y_v
= \set{Y_e}{e\ \text{edge of $T$ containing}\ v}
$ is a compatible peripheral structure on $X_v$ when $v\in V_0$ is a non-abelian  vertex of $T$.

This follows from the   example given in \autoref{be} if $v$ is a pA-vertex (even though one has attached Euclidean annuli to $\Sigma_{\tt v}$). 
If $v$ is an R-vertex, the space $M_v$ was constructed using \autoref{res: groves toral}, and 
$\mathcal Y_v$ is a   peripheral structure on $X_v$ by \autoref{res: example periph structure}. It is compatible with the action of $E_v$ because $\varphi$ acts trivially on $G_v$ 
  (see the baby example in \autoref{periphb}).

\section{A combination theorem for \total{}}
\label{final}

The goal of this section is to prove the following combination theorem for \total{}. We refer to \autoref{stat} for the notion of a $\varphi$-adapted metric decomposition of $G$ (and the  associated notations); it collects the relevant properties of the refined JSJ tree $T$  of \autoref{pjsj3}  and  the Scott-Wall space  $X$ constructed in \autoref{MSW}.

\begin{theo}\label{thm: combination without spectra}
Let $G$ be a group. Let $\varphi\in\Aut(G)$. Let $p:X\to T$ be a $\varphi$-adapted metric decomposition of $G$. Assume that  $E_v$ has \total{} with respect to $X_v$ for every vertex $v$ of $T$ (see \autoref{def: growth dichotomy action}).

Then  $E$ has \total{} with respect to $X$.
\end{theo}

In order to prove the ``moreover'' in \autoref {main}, we will   state and prove \autoref{res: combination growth}, a   more precise version of \autoref{thm: combination without spectra} that compares all possible growth types in $G$ to those in $G_v$. With \autoref{palgeo2} in mind, we shall define two notions of spectra recording these growth types. A reader only interested in the main assertion of \autoref{main} may safely ignore all spectra.

The proof of \autoref{res: combination growth} will be completed in \autoref{sec: proof combination growth}. Before that, we introduce some extra tools. In \autoref{secgps}, we endow the space $X$
with a (global) peripheral structure. This provides a metrically useful way to decompose a path in $X$ into a concatenation of local contributions; precise metric estimates will be given in \autoref{sec: metric estimates}. We then take advantage of this property to estimate the growth of $\phi$.

Finally, armed with \autoref{res: combination growth}, we will complete the proof of \total{} for automorphisms of one-ended toral relatively hyperbolic groups in \autoref{sec: endgame-one-ended}.

\subsection{Spectra}\label{spec}

 Recall that $\pi:G\rtimes_{\varphi}\Z \to\Z$ is the canonical projection.

\begin{defi}[Spectrum, palangre spectrum]\label{de: spectrum}
    Let $G$ be a group, let $\varphi\in\Aut(G)$, let $E=G\rtimes_{\varphi}\Z$, and let $X$ be a proper geodesic metric space on which  $G$ acts properly, cocompactly by isometries. 
    \begin{itemize}
    \item The \emph{spectrum} of  $E$ (with respect to $X$) is the set $\Lambda$ of all pairs $(d,\lambda)\in\N\times [1,+\infty)$ for which there exists $g\in G$ such that 
			 $\norm[X]{ \varphi^n(g)}
	\ \asymp n^d \lambda^n.$
        \item The \emph{palangre spectrum} of   $E$ (with respect to $X$) is the set $\Lambda_{\mathrm{pal}}$ of all pairs $(d,\lambda)\in\N\times [1,+\infty)$ for which there exist $\alpha,\beta\in E$ with $\pi(\alpha)=\pi(\beta)$ positive
          such that $\dist{x}{\alpha^n\beta^{-n}x}\asymp n^d\lambda^{n\pi(\alpha)}$ for some (equivalently, any) $x\in X$.
    
   \end{itemize} 
   
    Both spectra contain $(0,1)$ (bounded growth).
    By \autoref{equiv}, the spectra only depend on $\varphi$. The spectrum $\Lambda$ is the spectrum of $\varphi$, as defined in \autoref{sga}. The set $\Lambda_{\mathrm{pal}}$ will be called the 
\emph{palangre spectrum} of $\varphi$; it may be defined as the set of $(d,\lambda)$ such that there exist $g,h\in G$ and $k\geq1$ such that  $\abs{L_n(\varphi^k,g)R_n(\varphi^k,h)}\asymp n^d \lambda^{kn}$.

\end{defi}

The following remark will enable us to replace $\varphi$ by a power when computing spectra.

\begin{rema}\label{rem: spectrum-power}
Let $k\in \N \setminus\{0\}$.
Denote by $\Lambda$, $\Lambda_{\rm pal}$ and $\Lambda^k$, $\Lambda_{\rm pal}^k$ the respective spectra of $\varphi$ and $\varphi^k$. 
One has $\Lambda^k=\{(d,\lambda^k) \mid (d,\lambda)\in \Lambda\}$ by the first assertion of \autoref{puiss},  and also $\Lambda_{\mathrm{pal}}^k=\{(d,\lambda^k)\mid (d,\lambda)\in\Lambda_{\mathrm{pal}}\}$.

In the definition of $\Lambda_{\mathrm{pal}}$, one might be tempted to only consider elements $\alpha,\beta$ such that $\pi(\alpha)=\pi(\beta)=1$. This might however not lead to the same definition in general, 
because not all elements $\alpha\in E$ with $\pi(\alpha)=k$   arise as $k^{\rm th}$ powers of elements projecting to $1$ under $\pi$. The definition we gave is the correct one to ensure that $\Lambda^k_{\rm pal}$ is as stated above.
It turns out that the two possible definitions of $\Lambda_{\rm pal}$ coincide
when $G$ is a toral relatively hyperbolic group, but this is a consequence of our proof, and is not \emph{a priori} obvious.
\end{rema}

\begin{nota}\label{de: spectrum2}\  
    \begin{itemize}
\item   Recall that growth types are ordered in the obvious way, with $(d_1,\lambda_1) \leq (d_2, \lambda_2)$ if and only if $n^{d_1}\lambda_1^n \preccurlyeq n^{d_2} \lambda_2^n$ 
(see \autoref{gtype}).

    \item Given a set $\Delta$ of growth types $(d,\lambda)$, we define
    \begin{equation*}
        \Delta^+ = \Delta \cup \set{ (d+1, 1)}{(d,1) \in \Delta}.
    \end{equation*}
    Note that $(1,1)$ (linear growth) belongs to $\Delta^+$ if $ \Delta$ is a set $\Lambda $ or $\Lambda_{\mathrm{pal}}$ as in \autoref{de: spectrum}.
\end{itemize}  
\end{nota}

The following theorem is a refined version of \autoref{thm: combination without spectra}, that keeps track of the spectra of the actions.

\begin{theo}
\label{res: combination growth}
	Let $G$ be a group.
	Let $\phi \in \aut G$.
	Let $p \colon X \to T$ be a $\varphi$-adapted metric decomposition of $G$, and denote by $V$ the vertex set of $T$.
	Assume that $E_v$ has \total{} with respect to $X_v$, for every vertex $v\in V$. Denote its spectrum by $\Lambda_v$, and  its palangre spectrum by $\Lambda_{\mathrm{pal},v}$.

	Then $E$ has \total{} with respect to $X$; its spectra $\Lambda$ and $\Lambda_{\rm pal}$ satisfy
	\begin{equation*}
	  \bigcup_{v \in V} \Lambda_v \subset \Lambda \subset \bigcup_{v \in V} (\Lambda_v\cup\Lambda_{\mathrm{pal},v})
	\end{equation*}
	and
	\begin{equation*}
	  \bigcup_{v \in V} \Lambda_{{\rm pal}, v} \subset \Lambda_{\rm pal} \subset {\bigcup_{v \in V} \Lambda_{\mathrm{pal},v}^+}.
	\end{equation*}
\end{theo}

\begin{rema}
The leftmost inclusion for the palangre spectrum $\Lambda_{\rm pal} $ is clear, using the convexity of $X_v$; the inclusion  for the spectrum  $\Lambda$ will follow from \autoref{clas elliptic}. If  $\Lambda_{\mathrm{pal},v}\inc \Lambda_v^+$ for all $v$, as is the case when $G$ is toral relatively hyperbolic   and $p$ is as in \autoref{MSW}, then $\bigcup_{v \in V} \Lambda_v \subset \Lambda \subset \bigcup_{v \in V} \Lambda_v^+$.
\end{rema}

\subsection{Global peripheral structure}
\label{secgps}

The tree $T$ is bipartite, with 
vertex set $V=V_0 \sqcup V_1$.
For concreteness, we shall refer to the  vertices of $T$ in $V_0$ and $V_1$ as \emph{non-parabolic} and \emph{parabolic} respectively (in \autoref{pjsj3}, the vertices in $V_1$ are the abelian ones).

Recall from (\ref{enu: MSW - compatible}) of \autoref{MSW}  that, if $w\in V_0$ is a non-parabolic vertex of $T$,  the space $X_w$  is  equipped with a ``local'' compatible peripheral structure  
\begin{equation*}
    \caly_w  = \set{Y_e}{e\ \text{edge of $T$ containing}\ w}.
\end{equation*}
 We now use the parabolic vertices to get  a ``global'' peripheral structure $\caly $ on $X$.

\begin{prop}
\label{res: global peripheral structure}
    Let $p \colon X \to T$ be a $\varphi$-adapted metric decomposition of $G$.
    The   collection  
    \begin{equation*}
        \mathcal Y = \set{X_v}{v \in V_1}
    \end{equation*}
    is a compatible peripheral structure on  $X$ (see Definitions \ref {def: contracting family} and \ref{def: compatible structure}).
\end{prop}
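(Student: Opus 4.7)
The plan is to verify the two peripheral-structure axioms (uniform contraction and separation of $\mathcal{Y}$) and the three compatibility conditions ($E$-invariance, quasi-equivariant projections, transversality). The key structural input is that $X$ is a tree of spaces over the bipartite tree $T$, with each $X_v$ convex by \autoref{MSW}(\ref{enu: MSW - vertex}) and each family $\mathcal{Y}_w$ a compatible peripheral structure on $X_w$ for $w \in V_0$ by \autoref{MSW}(\ref{enu: MSW - compatible}). The guiding principle throughout is that any projection onto a global object $X_v$ factors through a boundary space $Y_e$, allowing one to reduce to the local peripheral structures already available at non-parabolic vertices.

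For uniform contraction of $X_v$ with $v \in V_1$, convexity of $X_v$ forces any geodesic $c$ with $\dist c{X_v}$ sufficiently large to lie in a single component of $X \setminus X_v$, associated to some edge $e$ incident to $v$ whose other endpoint $w$ belongs to $V_0$. Every geodesic from a point of $c$ to $X_v$ must cross $Y_e$ first, so $\Pi_{X_v}(c) \subset Y_e$. I then split $c$ along $X_w$: the subarc $c \cap X_w$ is a geodesic in $X_w$ (again by convexity) and projects to a bounded subset of $Y_e$ because $Y_e$ is uniformly contracting in $X_w$. Each complementary subarc of $c$ lies in a component of $X \setminus X_w$ associated to some edge $e' \neq e$ incident to $w$, and every geodesic from such a subarc to $Y_e$ factors through $Y_{e'}$, so its projection onto $Y_e$ lies near $\Pi_{Y_e}^{X_w}(Y_{e'})$, which is bounded by separation within $\mathcal{Y}_w$. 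Uniformity of the contraction constant across $v \in V_1$ follows from the fact that $G$ has finitely many orbits on $V(T)$.

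Separation of $\mathcal{Y}$ is a structural variant: for distinct $v, v' \in V_1$, bipartiteness of $T$ forces the tree-geodesic $[v,v']$ to pass through some $w \in V_0$ with two distinct incident edges $e,e'$ towards $v$ and $v'$, so $\Pi_{X_v}(X_{v'}) \subset Y_e$ is controlled by $\Pi_{Y_e}^{X_w}(Y_{e'})$. For compatibility, $E$-invariance of $\mathcal{Y}$ is immediate from $E$-invariance of the bipartition $V = V_0 \sqcup V_1$ together with $E$-equivariance of $p$. For quasi-equivariant projections, both $\alpha \Pi_{X_v}(x)$ and $\Pi_{\alpha X_v}(\alpha x)$ lie in $Y_{\alpha e}$ for the edge $e$ at $v$ pointing towards $x$; essential triviality on $Y_e$ (\autoref{MSW}(\ref{enu: MSW - edge})) makes $\alpha$ act on $Y_e$ as an element of $G$, hence isometrically, and the comparison reduces to the local quasi-equivariance of $\mathcal{Y}_w$. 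Transversality is the subtlest condition: the hypothesis that no power of $\alpha$ stabilizes $X_v$ forces the orbit $(\alpha^n v)$ to be infinite in $T$, and a case analysis based on whether $\alpha$ acts hyperbolically on $T$ or fixes a vertex shows that for $n$ large the edge $e_n$ at $v$ selected by $\alpha^n x$ becomes constant (equal to the edge at $v$ pointing towards the axis, or towards the fixed vertex); the local transversality of $\mathcal{Y}_w$ at the relevant neighbor $w$ then bounds the orbit of projections inside $Y_{e_\infty}$.

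The main obstacle I expect is the uniform contraction step, where one must carefully track how a long geodesic in $X$ decomposes through the neighboring vertex space $X_w$ and how its projection in $X$ factors through the local peripheral structure $\mathcal{Y}_w$. Once this ``factorization through boundary spaces'' principle is in place, separation and the remaining compatibility conditions all follow from structurally similar arguments combining the tree-of-spaces geometry of $X$ with the features of the $\mathcal{Y}_w$ already supplied by \autoref{MSW}.
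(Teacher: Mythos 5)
Your overall plan matches the paper's proof: reduce everything to the local peripheral structures $\mathcal{Y}_w$ at vertices $w\in V_0$, using the edge spaces $Y_e$ as the conduit. The treatments of separation, $E$-invariance, quasi-equivariant projections (modulo spelling out the reduction of \autoref{eqproj} to an $\alpha$ that fixes the relevant edge $e$, so that $\alpha X_w = X_w$ and the local structure applies directly), and transversality (modulo running through \autoref{res: geo starting at non-periodic pt} to find the eventually-constant first edge) are all structurally the same as the paper's.

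There is, however, a genuine gap in your uniform-contraction argument. You assert that ``the subarc $c\cap X_w$ is a geodesic in $X_w$ (again by convexity).'' This is not warranted: the convexity of $X_w$ in \autoref{MSW}(\ref{enu: MSW - vertex}) only guarantees that \emph{some} geodesic between two points of $X_w$ stays in $X_w$, not that the given geodesic $c$ does. Consequently $c$ may re-enter and leave $X_w$ repeatedly, so $c\cap X_w$ need not be connected, let alone a geodesic. The paper handles this by isolating the \emph{excursion intervals} of $c$ relative to $X_w$ and replacing the interior part of $c$ by a new geodesic $c_0$ in $X_w$ with the same endpoints. But this introduces a second subtlety you also miss: even though $c$ is $5D$-far from $X_v\supseteq Y_e$, the re-geodesified $c_0$ might pass close to $Y_e$, so one cannot directly invoke $D$-contraction of $Y_e$ in $X_w$. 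The paper's ``bad interval'' case analysis (no bad interval / exactly one / at least two) is exactly the mechanism that controls this, and it is what makes the contraction constant $5D$ work out. Without that analysis your chain of bounded projections does not close, because the hypothesis of \autoref{def: contracting set} fails on $c_0$. The rest of your strategy — excursions factoring through $Y_{e'}$ with $e'\neq e$ and then being controlled by separation of $\mathcal{Y}_w$ — is correct and is what the paper does for the excursion subarcs and for the case $c\cap X_w=\emptyset$, but the interior-part estimate needs the missing argument.
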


\begin{proof}
    By construction, the set $\mathcal Y$ is $E$-invariant and consists of closed subsets of $X$.	Since $V_0/G$ is finite, there exists  $D>0$ such that  the following hold for every $w \in V_0$:
	\begin{itemize}
		\item The elements of $\mathcal Y_w$ are $D$-contracting.
		\item If $e$ and $e'$ are two distinct edges of $T$ containing $w$, the projection of $Y_{e'}$ on $Y_e$ has diameter at most $D$.
	\end{itemize}
 
 We establish the four properties appearing in Definitions \ref {def: contracting family} and \ref{def: compatible structure}.
	\paragraph{Uniform contraction.}
	Let $v \in V_1$.
	We are going to prove that $X_v$ is $5D$-contracting.
	Let $c \colon \intval ab \to X$ be a geodesic of $X$ such that $\dist c{X_v} \geq 5D$.
	Then   $p\circ c$ is contained in a single connected component of $T \setminus \{v\}$.
	We let $e=vw$ be the (unique) edge starting at $v$ whose interior is contained in this component.
	Observe that $\Pi_{X_v}(c) = \Pi_{Y_e}(c)$.
 
    If $c$ does not meet $X_w$, then $X_w$ separates it from 
$ X_v$ and there exists an  edge $e'=wv'$ other than $e$ such that $\Pi_{X_v}(c)= \Pi_{Y_e}(c) \subset \Pi_{Y_e}(Y_{e'})$.
	Since $w\in V_0$,  the local peripheral structure $\mathcal Y_w$ of $X_w$ is separated by assumption, so the latter projection has diameter at most $D$.

	Now assume   that $c$ intersects $X_w$. Let $c(a_0)$ and $c(b_0)$ be the entry and exit points of $c$ in $X_w$ (note that  $a\leq a_0\leq b_0\leq b$).
    Say that a subinterval $\intval{a'}{b'} \subset \intval {a_0}{b_0}$ is   an  \emph{excursion interval} if 
    \begin{equation*}
        c([a',b'])\cap X_w=\{c(a'),c(b')\}.
    \end{equation*}
    We call the set of points $c(t)$, where $t\in\intval {a_0}{b_0}$ does not belong to the interior of an excursion interval, the \emph{interior part} of $c$.
    Since $X_w$ is closed, it is exactly the set of points of $c$ contained in $X_w$.

    The argument given above using $\mathcal Y_w$ shows that $\Pi_{Y_e}(c(I))$  has diameter at most $D$ if $I$ is an excursion interval, or $I=[a,a_0]$, or $I=[b_0,b]$.
    So it suffices to prove that the projection of the whole interior part has diameter at most $3D$.
 
    Consider the restriction of $c$ to $\intval{a_0}{b_0}$. 
    We can define a geodesic $c_0$ with the same endpoints but entirely contained in $X_w$, by modifying $c$ on each excursion interval using the convexity of $X_w$. 
    Note that $c_0$ contains the interior part of $c$.
 
    Say that an excursion interval  $\intval{a'}{b'} $ of $c$ is  \emph{bad}   if $c_0([a',b'])$ is at distance less than $2D$ from $Y_e$. 
    There are three cases.
   \begin{itemize}
        \item   
       If there is no bad interval, the whole of $c_0$ is $2D$-far from $Y_e$ (recall that $c$ is $5D$-far from $Y_e$). 
        Since $Y_e$ is $D$-contracting in $X_w$, the projection of $c_0$ (hence of the interior part of $c$) onto $Y_e$ has diameter at most $D$.
       
       \item 
        Now suppose that there are two bad intervals $[a_1,b_1]$, $[a_2,b_2]$, with $a_1<b_1<a_2<b_2$. 
        Choose $s_i\in [a_i,b_i]$ with $d(c_0(s_i),Y_e) < 2D$. 
        By \autoref{res: contraction vs quasi-convex}, the geodesic $c_0(\intval{s_1}{s_2})$ lies in the $7D/2$-neighborhood of $Y_e$. 
        This contradicts the fact that $c_0(b_1) = c(b_1)$   is at least $5D$-far from $X_v$. 
        
        \item 
        The last case is when there is exactly one bad interval $[a_1,b_1]$. 
        Assuming this, we consider $c_0(\intval{a_0}{a_1})$, $c( \intval{a_1}{b_1})$, $c_0( \intval{b_1}{b_0})$. 
        As explained above, the projection of each of these three sets onto $Y_e$ has diameter at most $D$, so their union has diameter at most $3D$. 
        This union contains the projection of the interior part of $c$ and the result is proved. 
    \end{itemize}
	
    \paragraph{Separation.}
	Let $v, v' \in V_1$ be distinct parabolic vertices. 
	We are going to show that $\Pi_{X_v}\left(X_{v'}\right)$ has diameter at most $D$.
	Let $e_1=vw$ and $e_2$ be the first two edges of the geodesic $\geo v{v'} \subset T$. 
    Every path joining  $X_v$ and $X_{v'}$ contains a subpath joining $Y_{e_1}$ to $Y_{e_2}$.
	Consequently
	\begin{equation*}
		\Pi_{X_v}\left(X_{v'}\right) \subset \Pi_{Y_{e_1}}\left(Y_{e_2}\right)
	\end{equation*}
	has diameter at most $D$ because $\mathcal Y_w$ is separated.
    
	\paragraph{Quasi-equivariant projections.}
    We fix $Y = X_v$ with $v\in V_1$, as well as $x\in X$ and $\alpha\in E$.
    We are going to prove that
    \begin{equation*}
		\diam\left( \Pi_{\alpha Y}(\alpha x) \cup \alpha \Pi_Y(x) \right) \leq 3D.
	\end{equation*}
	There exists $v'$ in $ V_0$ or $V_1$ such that $x \in X_{v'}$. 
	We assume $v'\ne v$, as the result is clear  if $v'=v$, and we denote by $e=vw$ the first edge along the geodesic $\geo v{v'} \subset T$. 
	Note that $w\in V_0$.

    By \autoref{eqproj}, we may replace $\alpha$ by some $g\alpha$ and assume that $\alpha$ fixes $e$.
    Then $\alpha$ leaves $Y,Y_e, X_w$ invariant, so that $\Pi_Y(x) = \Pi_{Y_e}(x)$ and $\Pi_{\alpha Y}(\alpha x) = \Pi_{ \alpha Y_e}(\alpha x)$ (we keep writing $\alpha Y_e$ even though $\alpha Y_e=Y_e$). 
    If $v' = w$, the result follows from the properties of $\mathcal Y_w$. 
    Otherwise, we write $e'$ for the second edge along $\geo vw$ and choose a point $y\in Y_{e'}$.
    Observe that
    \begin{equation*}
		\Pi_{Y_e}(x) \cup \Pi_{Y_e}(y) \subset \Pi_{Y_e}(Y_{e'})
		\quad \text{and} \quad
		\Pi_{\alpha Y_e}(\alpha  x) \cup \Pi_{\alpha  Y_e}(\alpha y) \subset \Pi_{\alpha Y_e}(\alpha Y_{e'}).
	\end{equation*}
    Since $\mathcal Y_w$ is separated, $\Pi_{Y_e}(Y_{e'})$ and $\Pi_{\alpha Y_e}(\alpha Y_{e'})$ have diameter at most $D$.
    Since the restriction of  $\alpha$ 
    to $Y_e$ is isometric by essential triviality, we get in particular 
    \begin{equation*}
		\diam\left(\alpha \Pi_{Y_e}(x) \cup \alpha\Pi_{Y_e}(y)\right) \leq D
		\quad \text{and} \quad
		\diam\left(\Pi_{\alpha Y_e}(\alpha  x) \cup \Pi_{\alpha  Y_e}(\alpha y) \right) \leq D.
	\end{equation*}
 	Quasi-equivariance of projections in $\mathcal Y_w$ yields
	\begin{equation*}
		\diam\left( \Pi_{\alpha Y_e}(\alpha y) \cup \alpha \Pi_{Y_e}(y) \right) \leq D,
	\end{equation*}
	whence the result.
    
	\paragraph{Transversality.}
	The proof of transversality uses  a simple lemma about trees, whose proof is left as an exercise.

\begin{lemm}
\label{res: geo starting at non-periodic pt}
    Let $g$ be an isometry of a simplicial tree $S$.
    Let $v,w $ be vertices, with $v$ not periodic under $g$. 
    There is $n_0 \in \N$ such that the following holds  for every integer $n \geq n_0$:
 
    \begin{enumerate}
        \item if $g$ is elliptic, then $\geo v{g^n(w)}$ contains the geodesic $\geo vp$, with $p$  the projection of $v$ on the set of periodic points of $g$; in particular, the first edge of $\geo v{g^n(w)}$ is independent of $n$.
     
        \item if $g$ is   hyperbolic, then  $\geo v{g^n(w)}$ contains the geodesic $\geo vp\cup \geo p {g^2p}$, with $p$ the projection of $v$ on the axis of $g$; in particular, the first two edges of $\geo v{g^n(w)}$ are independent of $n$.
        \qed
    \end{enumerate}
\end{lemm}

    Let  $Y = X_v$, with $v \in V_1$ a parabolic vertex.
	Let $\alpha \in E$ and $x \in X$. 
	Let $w\in V$ be such that $x\in X_w$.
	We assume that no power of $\alpha$ preserves $Y$ (equivalently, $v$ is not $\alpha$-periodic). 
	Let $n_0$ be the integer given by \autoref{res: geo starting at non-periodic pt} applied to $\alpha$ acting on $T$.
	It suffices to prove that $\Pi_Y(\set{\alpha ^n x}{n \geq n_0})$ is bounded.
	
    Let $n \geq n_0$.
    Let $e=vu$ be the first edge on $\geo v{\alpha^n(w)}$ (which does not depend on $n$).
    Note that $u\in V_0$ and $\Pi_Y(\alpha^n x)=\Pi_{Y_e}(\alpha^n x)$.

    First suppose that $u$ is not $\alpha$-periodic (so $u\ne p$ if $\alpha$ is   elliptic). According to \autoref{res: geo starting at non-periodic pt}, the second edge of $\geo v{\alpha^n(w)}$ is also independent of  $n\geq n_0$.
    We denote it by $e'=uv'$. 
    Observe now that $\Pi_{Y_e}(\alpha^n x)$ is contained in $ \Pi_{Y_e}(Y_{e'})$, which is bounded by separation of $\mathcal Y_u$.
  
  Now suppose that $u$ has period $k$ under $\alpha$. Writing \[\Pi_Y(\set{\alpha ^n x}{n \geq kn_0})=\bigcup_{i=0}^{k-1}\Pi_Y\left(\set{\alpha^{kn}(\alpha^i(x))}{n \geq n_0}\right),\] 
we can replace $\alpha$ by $\alpha^k$ and use the points $\alpha^i(x)$ (which belong to $X_{\alpha^i(w)}$), and thus assume that $\alpha$ fixes $u$. 

If $w=u$, boundedness of $\Pi_Y(\set{\alpha ^n x}{n \geq n_0})$
follows from the transversality of $\mathcal Y_u$.  If not, let $e'=uv'$ be the second edge of $\geo{v}{\alpha^n(w)}$. The set $\Pi_{Y_e}( \alpha^n x)$ is contained in $\Pi_{Y_e}( \alpha^n Y_{e'})$, which has diameter at most $D$ by separation of $\mathcal Y_u$, so it suffices to show that $\Pi_{Y_e}(\set{\alpha ^n y}{n \geq n_0})$ is bounded for some fixed $y\in Y_{e'}$. But this is true by transversality of $\mathcal Y_u$.
     This completes the proof of \autoref{res: global peripheral structure}.
\end{proof}

\subsection{Metric estimates}
\label{sec: metric estimates}

We now explain how the global peripheral structure
\begin{equation*}
        \mathcal Y = \set{X_v}{v \in V_1}
    \end{equation*}
    of \autoref {res: global peripheral structure}
can be used to estimate distances in $X$, knowing local data from the vertex spaces $X_v$.  
We fix $D \in \R_+$ with the following properties: every $Y \in \mathcal Y$ is $D$-contracting; the diameter of $\Pi_Y(Y')$ is at most $D$ for any distinct $Y,Y' \in \mathcal Y$.

\begin{lemm}
\label{res: dist decomposition single periph}
	  Let $v,v' $ be vertices of $T$.
	Let $w \in V_1$ be a parabolic vertex on $\geo v{v'}$.
	Let $(x,x') \in X_v \times X_{v'}$,  and let $y$ be a projection of $x$ on $X_w$.
	Then 
	\begin{equation*}
		 { \dist xy + \dist y{x'} 	\leq\dist x{x'}} +   4D.
	\end{equation*}
\end{lemm}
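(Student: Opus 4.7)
The plan is to combine two ingredients: a separation statement coming from the tree structure, and the $D$-contracting property of $X_w$ as a member of the global peripheral structure $\mathcal{Y}$ from \autoref{res: global peripheral structure}.

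First I will show that any continuous path from $x$ to $x'$ in $X$ must meet $X_w$. Indeed, the projection $p\colon X\to T$ is continuous, so any such path projects to a path in $T$ from $v$ to $v'$. Since $T$ is a tree, this image path must contain the geodesic $\geo{v}{v'}$, and by hypothesis this geodesic passes through $w$. Hence some point of our path lies in $p^{-1}(w)\subset X_w$. In particular, any geodesic $\gamma$ from $x$ to $x'$ in $X$ intersects $X_w$, and we may pick $z\in\gamma\cap X_w$. (The boundary cases $w=v$ or $w=v'$ make the conclusion trivial, since then one can take $y=x$ or $y=x'$.)

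Since $z$ lies on the geodesic $\gamma$, we have the exact decomposition
\begin{equation*}
    \dist{x}{x'} \;=\; \dist{x}{z} + \dist{z}{x'}.
\end{equation*}
Now I apply the second item of \autoref{rem: projection contracting set2} with $Y=X_w$: since $X_w$ is $D$-contracting (uniform contraction of the global peripheral structure $\mathcal{Y}$, established in \autoref{res: global peripheral structure}) and $y$ is a projection of $x$ onto $X_w$, for every point $z\in X_w$ we have
\begin{equation*}
    \dist{x}{z} \;\geq\; \dist{x}{y} + \dist{y}{z} - 4D.
\end{equation*}
Combining these two inequalities with the triangle inequality $\dist{y}{z}+\dist{z}{x'}\geq\dist{y}{x'}$ yields
\begin{equation*}
    \dist{x}{x'} \;\geq\; \dist{x}{y} + \dist{y}{z} + \dist{z}{x'} - 4D \;\geq\; \dist{x}{y} + \dist{y}{x'} - 4D,
\end{equation*}
which rearranges to the desired inequality.

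There is no substantial obstacle: once one observes that the tree structure forces geodesics to pass through $X_w$, the estimate is an immediate consequence of the $D$-contracting property. The only mild subtlety is to make sure that the generic additive slack of $4D$ coming from \autoref{rem: projection contracting set2} is not worsened by the combination with the triangle inequality; the argument above shows that it is not.
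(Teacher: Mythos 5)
Your proof is correct and is essentially the paper's argument: find a point where a geodesic from $x$ to $x'$ meets $X_w$, apply item~2 of \autoref{rem: projection contracting set2} at that point, and finish with the triangle inequality. One small parenthetical slip: when $w=v'$ you say ``one can take $y=x'$'', but $y$ is a \emph{given} projection of $x$ onto $X_w$ and need not equal $x'$; the contracting estimate $\dist x{x'}\geq\dist xy+\dist y{x'}-4D$ still disposes of that case directly, so nothing in your argument is affected.
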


\begin{proof}
	Let $c \colon \intval ab \to X$ be a geodesic from $x$ to $x'$. Since $w$ belongs to $\geo v{v'}$, there exists $t \in \intval ab$ such that $c(t) \in X_w$.
    Since $X_w$  is $D$-contracting, it follows from \autoref{rem: projection contracting set2} that
	\begin{equation*}
		\dist x{c(t)} \geq \dist xy + \dist y{c(t)} -   4D.
	\end{equation*}
	Hence  
	\begin{align*}
		\dist x{x'} 
		=\dist x{c(t)} + \dist{c(t)}{x'} 
		& \geq \dist xy + \dist y{c(t)} + \dist{c(t)}{x'} - 4D \\
		& \geq \dist xy + \dist y{x'} - 4D. \qedhere
	\end{align*}
\end{proof}

\begin{lemm}
\label{res: dist decomposition several periph} 
	Let $v,v' \in V$.
	Let $v_1, v_2, \dots, v_n \in V_1$ be a sequence of pairwise distinct parabolic vertices aligned in this order along $\geo v{v'}$.
	Let $(x_0, x_{n+1}) \in X_v \times X_{v'}$.
	Let $x_1$ be a projection of $x_0$  onto $X_{v_1}$.
	For every $k \in \intvald 2n$, let $x_k$ be a projection  of some point in $X_{v_{k-1}}$ onto $X_{v_k}$.
	Then 
	\begin{equation*}
		\sum_{k=0}^n\dist{x_k}{x_{k+1}} \leq \dist {x_0}{x_{n+1}} +  6nD.
	\end{equation*}
\end{lemm}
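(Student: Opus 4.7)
The plan is to iterate \autoref{res: dist decomposition single periph} along the sequence $v_1, \dots, v_n$. If for each $k\geq 2$ the point $x_k$ were the projection of $x_{k-1}$ onto $X_{v_k}$, the single-periph lemma would give the result directly with an even sharper constant. The hypothesis, however, only tells us that $x_k$ is a projection of \emph{some} point of $X_{v_{k-1}}$, not necessarily $x_{k-1}$ itself. The main (and essentially only) obstacle is to show that this discrepancy is uniformly bounded.

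For $k \geq 2$, let $p_k$ be a projection of $x_{k-1}$ onto $X_{v_k}$. Note that $x_{k-1} \in X_{v_{k-1}}$, since $x_{k-1}$ is itself a projection onto $X_{v_{k-1}}$ (for $k=2$ this uses $x_1 \in X_{v_1}$, for $k\geq 3$ the recursive definition). Therefore both $p_k$ and $x_k$ belong to $\Pi_{X_{v_k}}(X_{v_{k-1}})$. By separation of the global peripheral structure $\mathcal Y$ (\autoref{res: global peripheral structure} and the choice of $D$ at the start of \autoref{sec: metric estimates}), this set has diameter at most $D$, so $\dist{p_k}{x_k} \leq D$.

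Now apply \autoref{res: dist decomposition single periph} to the triple $(x_{k-1}, p_k, x_{n+1})$, which is legitimate because $v_k$ is a parabolic vertex lying on $\geo{v_{k-1}}{v'}$ (the $v_i$ are aligned in order along $\geo v{v'}$). This yields
\begin{equation*}
    \dist{x_{k-1}}{p_k} + \dist{p_k}{x_{n+1}} \leq \dist{x_{k-1}}{x_{n+1}} + 4D.
\end{equation*}
Using two triangle inequalities to replace $p_k$ by $x_k$ (at an extra cost of $2D$), I obtain the iterative estimate
\begin{equation*}
    \dist{x_{k-1}}{x_k} + \dist{x_k}{x_{n+1}} \leq \dist{x_{k-1}}{x_{n+1}} + 6D \qquad (k \geq 2).
\end{equation*}
For $k=1$, no approximation is needed since $x_1 = \Pi_{X_{v_1}}(x_0)$ by hypothesis, so \autoref{res: dist decomposition single periph} applies directly with the sharper constant $4D$.

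Finally, I sum these inequalities telescopically: adding the $k=1$ bound to the $k \geq 2$ bounds for $k=2,\dots,n$ gives
\begin{equation*}
    \sum_{k=0}^n \dist{x_k}{x_{k+1}} \leq \dist{x_0}{x_{n+1}} + \bigl(4 + 6(n-1)\bigr)D \leq \dist{x_0}{x_{n+1}} + 6nD,
\end{equation*}
as required. Once the approximation $\dist{p_k}{x_k} \leq D$ is established via separation, the rest is a routine induction.
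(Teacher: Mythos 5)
Correct, and essentially the same as the paper's proof. The paper packages the argument as a descending induction on $j$ (via the auxiliary statement $(\Delta_j)$), but the core estimate is identical: you apply \autoref{res: dist decomposition single periph} to the triple $(x_{k-1},p_k,x_{n+1})$, use separation of $\mathcal Y$ to get $\dist{p_k}{x_k}\leq D$, and then absorb $p_k$ at cost $2D$ to obtain $\dist{x_{k-1}}{x_k}+\dist{x_k}{x_{n+1}}\leq\dist{x_{k-1}}{x_{n+1}}+6D$; summing (or inducting) over $k$ gives the result.
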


\begin{rema}
	The vertices $v_1, \dots, v_n$ are pairwise distinct, but	we allow $ v_1=v$ and $ v_n=v'$.
\end{rema}

\begin{proof}
	According to \autoref{res: dist decomposition single periph}, we have
	\begin{equation}
	\label{eqn: dist decomposition several periph - init}
	    \dist {x_0}{x_1} + \dist {x_1}{x_{n+1}} \leq \dist {x_0}{x_{n+1}} +  4D.
	\end{equation}
	We prove by descending induction that, for every $j \in \intvald 1n$,
	\begin{equation}
	\label{eqn: dist decomposition several periph}
		\sum_{k=j}^n\dist{x_k}{x_{k+1}} \leq \dist {x_j}{x_{n+1}} +  6(n-j)D. \tag{$\Delta_j$} 
	\end{equation}
Combined with (\ref{eqn: dist decomposition several periph - init}), the statement $(\Delta_1)$ will provide the result. Note that $(\Delta_n)$ is obvious.
	Let $j \in \intvald 2n$ for which $(\Delta_j)$ 
	holds.
	Let $p$ be a projection of $x_{j-1}$ on $X_{v_j}$.
	By \autoref{res: dist decomposition single periph} applied with $v=v_{j-1}$ we have
	\begin{equation*}
		\dist {x_{j-1}}p + \dist p{x_{n+1}} \leq \dist{x_{j-1}}{x_{n+1}} +  4D.
	\end{equation*}
	Recall that $x_j$ is a projection  of \emph{some} point in $X_{v_{j-1}}$ on $X_{v_j}$.
	Since the projection of $X_{v_{j-1}}$ on $X_{v_j}$ has diameter at most $D$, we get $\dist p{x_j} \leq D$.
	Hence
	\begin{equation*}
		\dist {x_{j-1}}{x_j} + \dist {x_j}{x_{n+1}}
		\leq  \dist{x_{j-1}}{x_{n+1}} +  6D.
	\end{equation*}
	Adding to $(\Delta_j)$ shows that $(\Delta_{j-1})$ holds.
\end{proof}

Recall that we defined the translation length as $\norm[X] g =\inf_{x\in X} \dist x{gx}$. 
The following lemma tells us how to approximate it. 
It will be used in \autoref{classconj} to understand the growth of conjugacy classes   under iteration of an automorphism.

\begin{lemm}
\label{res: computing translation length}
	Let $g \in G$ be a hyperbolic element (for its action on $T$).
	Let $v \in V_1$ be a parabolic vertex on the axis of $g$.
	Let $z$ be any  point in the projection of $X_v$ onto $gX_v$.
	Then 
	\begin{equation*}
		 { \norm[X] g \geq \dist z{gz}}-  10D.
	\end{equation*}
\end{lemm}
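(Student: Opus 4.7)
The strategy is to apply the multi-vertex decomposition Lemma~\ref{res: dist decomposition several periph} to a long $g$-trajectory, choosing as intermediate parabolic vertices the successive $g$-translates of $v$ along the axis of $g$; this will force each consecutive link of the decomposition to measure essentially $\dist z{gz}$, thereby controlling the total length.

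Fix $\epsilon > 0$ and pick $x_* \in X$ with $\dist{x_*}{gx_*} \leq \norm[X]g + \epsilon$. Let $p_a$ denote the projection of $p(x_*)$ onto the axis of $g$ in $T$. For $N$ large, the geodesic $\geo{p(x_*)}{g^N p(x_*)}$ in $T$ contains the axis subsegment $[p_a, g^N p_a]$ of length $N\norm[T] g$, and hence contains $n = N + O(1)$ consecutive translates $g^a v, g^{a+1}v, \dots, g^{a+n-1} v$ of $v$, appearing in this order. Apply Lemma~\ref{res: dist decomposition several periph} with $x_0 = x_*$, $x_{n+1} = g^N x_*$, intermediate parabolic vertices $v_k = g^{a+k-1} v$, and (for $k \geq 2$) $x_k$ chosen as a projection of $x_{k-1}$ onto $X_{v_k} = g^{a+k-1} Y$. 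Using that $G$ acts on $X$ by isometries, the lemma yields
\[
\sum_{k=0}^{n} \dist{x_k}{x_{k+1}} \;\leq\; \dist{x_*}{g^N x_*} + 6nD \;\leq\; N(\norm[X]g + \epsilon) + 6nD.
\]

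The crux is to show that $\dist{x_k}{x_{k+1}} \geq \dist z{gz} - 2D$ for every $k \geq 2$. By the definition of $z$ as a projection of some $y_0 \in Y$ onto $gY$, and since $g$ acts isometrically on $X$, applying $g^{-1}$ shows that $g^{-1}z$ is a projection of $g^{-1}y_0 \in g^{-1}Y$ onto $Y$; hence $g^{-1}z \in \Pi_Y(g^{-1}Y)$, and by the separation of the global peripheral structure $\mathcal Y$ (\autoref{res: global peripheral structure}) this set has diameter at most $D$. Since projections are $G$-equivariant, $x_k \in \Pi_{g^{a+k-1}Y}(g^{a+k-2}Y) = g^{a+k-1}\Pi_Y(g^{-1}Y)$, so $\dist{x_k}{g^{a+k-2}z} \leq D$; likewise $\dist{x_{k+1}}{g^{a+k-1}z} \leq D$. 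The triangle inequality then gives the claim. Summing over $k = 2, \dots, n$ and combining with the upper bound yields
\[
(n-1)(\dist z{gz} - 2D) \;\leq\; N(\norm[X]g + \epsilon) + 6nD.
\]
Dividing by $n$, taking $n \to \infty$ with $N - n$ bounded, and then $\epsilon \to 0$, we conclude $\dist z{gz} \leq \norm[X]g + 8D$, which is even stronger than the claimed $10D$ bound.

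The main delicate point will be the tree-geometric bookkeeping that produces enough translates of $v$ on the relevant axis segment of $T$; once this is in place, the proof is a clean combination of the equivariance of projections with the separation property of the global peripheral structure.
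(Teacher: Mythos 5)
Your argument is correct (modulo an inconsequential off-by-one: the lower bound $\dist{x_k}{x_{k+1}} \geq \dist z{gz} - 2D$ applies only for $2\leq k\leq n-1$, since $x_1$ is a projection of the arbitrary basepoint $x_0$ rather than of a point in a parabolic piece, and $x_{n+1}=g^Nx_*$ is not a projection at all; this replaces $n-1$ by $n-2$ without affecting the limit), but it follows a genuinely different route from the paper's. The paper gives a direct, non-asymptotic argument: for an arbitrary $x\in X_w$ it translates $x$ and $w$ by a power of $g$ so that $v\in[w,gw)$, sets $y$ to be a projection of $x$ onto $X_v$, applies the single-vertex decomposition Lemma~\ref{res: dist decomposition single periph} to obtain $\dist x{gx}\geq\dist y{gy}-4D$, and then combines \autoref{rem: projection contracting set2} with separation to show that $z$ is $D$-close to any projection of $y$ onto $gX_v$, yielding $\dist y{gy}\geq\dist z{gz}-6D$. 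You instead iterate $g$ over a long time interval, apply the multi-vertex Lemma~\ref{res: dist decomposition several periph} to the orbit segment from $x_*$ to $g^Nx_*$ through the roughly $N$ consecutive translates of $v$ along the axis, bound each link from below by $\dist z{gz}-2D$ using equivariance and separation of the global peripheral structure, and then divide by $N$ and pass to the limit. Your averaging approach costs some tree-geometric bookkeeping and a limiting argument, but it yields a marginally sharper constant ($8D$ in place of $10D$); the paper's two-step direct computation is shorter and bounds $\dist x{gx}$ for every individual $x$ before taking an infimum, which is all that is needed downstream.
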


\begin{proof}
	 Let $x \in X_w$, for some $w \in V$. 
	We have to bound $\dist  x{gx}$ from below.
	The vertex $v$ lies on the axis of $g$.
	Up to translating simultaneously $x$ and $w$ by a power of $g$, we can assume that $v$ belongs to the geodesic $[w, gw)$.
	Let $y$ be a projection of $x$ onto $X_v$.
	It follows from \autoref{res: dist decomposition single periph} that
	\begin{equation*}
		\dist x{gx} 
		\geq \dist x{y} + \dist y{gx} - 4D 
		= \dist {gx}{gy} +\dist y{gx}    -  4D
		\geq \dist y{gy} - 4D.
	\end{equation*}
	Since $\mathcal Y$ is $D$-separated and $z$ lies in the projection of $X_v$ onto $gX_v$, the point  $z$ is $D$-close to any projection   $y'$ of $y$ onto $gX_v$.
	We get from \autoref{rem: projection contracting set2} that
    \begin{align*} 
         \dist y{gy} & \geq \dist y{y'}+\dist {y'}{gy}  - 4D \\
         & \geq  \dist yz+\dist z{gy}  - 6D \\
         & \geq \dist{gy}{gz} + \dist z{gy}  - 6D \\
         & \geq \dist z{gz} - 6D.
    \end{align*}
	Consequently $\dist x{gx} \geq \dist z{gz} -  10D$. 
	This   holds for every $x \in X$, whence the result.
\end{proof}

 The next lemma provides a way to decompose a palangre into two ``simpler'' palangres.
 By iterating this decomposition, we will be able later to reduce our understanding of palangre growth in $X$ to that of palangre growth in the local spaces $X_v$ (see \autoref{ee}).

\begin{lemm}
\label{chacal}
	Let $\alpha,\beta$ be in  $E = G \rtimes_\phi\Z$, with $\pi(\alpha)=\pi(\beta)$. 
	Let $e=v_0v_1$ be an edge of $T$ with $v_1\in V_1$ parabolic. 
	Call $m$ its midpoint. 
	Let $v,v'$ be vertices such that, for all but finitely many $n \in \N$, the points $\alpha^{-n}v$ (\resp $\beta^{-n}v'$) belong to the same component of $T\setminus\{m\}$ as $v_0$ (\resp $v_1$). 
	Let $\gamma\in G\alpha$ acting as the identity on $Y_e$.
	
 	If no power of $\alpha$ fixes $v_1$, then for every $x,x', y \in X$ we have
	\begin{equation*}
		\dist x{\alpha^{n}\beta^{-n}x'} \asymp \dist x{\alpha^n \gamma^{-n}y} + \dist y{\gamma^n \beta^{-n}x'}.
	\end{equation*}
\end{lemm}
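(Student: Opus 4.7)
My plan is as follows. The upper bound $\dist x{\alpha^n\beta^{-n}x'}\le \dist x{\alpha^n\gamma^{-n}y}+\dist y{\gamma^n\beta^{-n}x'}$ is immediate from the triangle inequality together with the isometric action of $\alpha^n\gamma^{-n}\in G$, which gives $\dist{\alpha^n\gamma^{-n}y}{\alpha^n\beta^{-n}x'}=\dist y{\gamma^n\beta^{-n}x'}$. The content of the lemma is the reverse inequality $\dist x{\alpha^n\gamma^{-n}y}+\dist y{\gamma^n\beta^{-n}x'}\le C\dist x{\alpha^n\beta^{-n}x'}+C$ for a constant $C=C(x,x',y)$, and this I would establish via projections in the compatible peripheral structure.

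My first move is to reduce to the case $y\in Y_e$: replacing $y$ by a projection $\hat y$ of $y$ onto $Y_e$ alters each term of the right-hand side by at most $\dist y{\hat y}$ (using the isometric action of $\alpha^n\gamma^{-n}$ on the first term), a $y$-dependent constant absorbed in the additive term of $\asymp$. Once $y\in Y_e$, the identity $\gamma^{\pm n}y=y$ (since $\gamma$ fixes $Y_e$ pointwise) and the isometric action of $g_1=\alpha^n\gamma^{-n}\in G$ and $g_2=\gamma^n\beta^{-n}\in G$ reformulate the statement as
\begin{equation*}
\dist{g_1^{-1}x}{g_2 x'}\asymp \dist{g_1^{-1}x}y+\dist y{g_2 x'}.
\end{equation*}

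Next I would set up the geometry. Because $\gamma$ fixes $Y_e=p^{-1}(m)$ pointwise it fixes $m$, and the $E$-invariant bipartition of $T$ forbids $\gamma$ from flipping $e$; hence $\gamma$ fixes $e$ pointwise and preserves the two components $S_0,S_1$ of $T\setminus\{m\}$. Using the hypothesis $\alpha^{-n}v\in S_0$ and $\beta^{-n}v'\in S_1$ for all but finitely many $n$, together with the fact that $p(x),p(x')$ lie at bounded $T$-distance from $v,v'$, one deduces $p(g_1^{-1}x)=\gamma^n\alpha^{-n}p(x)\in S_0$ and $p(g_2 x')\in S_1$ for $n$ large. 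Writing $P=\Pi_{X_{v_1}}(g_1^{-1}x)$ and $P'=\Pi_{X_{v_1}}(g_2 x')$ with respect to the $D$-contracting set $X_{v_1}$ of the global peripheral structure $\mathcal Y$ from \autoref{res: global peripheral structure}, the contracting-set estimate of \autoref{rem: projection contracting set2}(3) combined with the triangle inequality $|\dist y{P'}-\dist P{P'}|\le \dist Py$ gives
\begin{equation*}
\bigl(\dist{g_1^{-1}x}y+\dist y{g_2 x'}\bigr)-\dist{g_1^{-1}x}{g_2 x'}\le 2\dist Py+8D;
\end{equation*}
note that $P\in Y_e$, since $p(g_1^{-1}x)\in S_0$ forces the nearest point of $X_{v_1}$ to project to $m$ in $T$. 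Everything reduces to bounding $\dist Py$ uniformly in $n$.

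This is the main obstacle, and the step where the specific choice of $\gamma$ matters. The key idea is that the quasi-equivariance of projections in $\mathcal Y$ is applied to the \emph{single} element $\gamma^n\in E$, so the compatibility constant $D$ does not depend on $n$: since $\gamma^n$ preserves $X_{v_1}$ and fixes pointwise the element $\Pi_{X_{v_1}}(\alpha^{-n}x)\in Y_e$ (because $\gamma$ is the identity on $Y_e$), one gets $\Pi_{X_{v_1}}(\gamma^n\alpha^{-n}x)$ within $D$ of $\Pi_{X_{v_1}}(\alpha^{-n}x)$. Finally, the hypothesis that no power of $\alpha$ fixes $v_1$, combined with preservation of the bipartition, shows that no power of $\alpha$ preserves $X_{v_1}$; transversality of $\mathcal Y$ applied to $\alpha^{-1}$ then gives that $\{\Pi_{X_{v_1}}(\alpha^{-n}x)\}_{n}$ is bounded in $X_{v_1}$. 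Combining, $P$ stays in a bounded region of $Y_e$, so $\dist Py$ is bounded by a $y$-dependent constant, completing the proof.
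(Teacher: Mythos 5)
Your proposal is correct and follows essentially the same route as the paper's own proof. Both arguments hinge on the same three ingredients: the distance-decomposition estimate across the separating set $X_{v_1}$, quasi-equivariance of projections in the global peripheral structure $\mathcal Y$ (the paper applies it to $\alpha^n$, you to $\gamma^n$ — these are equivalent here since $g_1^{-1}q_n = P$ up to the constant $D$), and transversality applied to $\alpha^{-1}$ to keep $\Pi_{X_{v_1}}(\alpha^{-n}x)$ bounded, together with the fact that $\gamma$ fixes $Y_e$ pointwise. You merely reorganize the computation by first disposing of the trivial upper bound, reducing to $y\in Y_e$, and rewriting everything via the $G$-isometries $g_1=\alpha^n\gamma^{-n}$ and $g_2=\gamma^n\beta^{-n}$.

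One small imprecision: to conclude $p(g_1^{-1}x)\in S_0\cup\{m\}$ and $p(g_2x')\in S_1\cup\{m\}$ you invoke that $p(x),p(x')$ are at ``bounded $T$-distance'' from $v,v'$, but this alone does not place them on the right side of $m$ when $\alpha^{-n}v$ (or $\beta^{-n}v'$) stays close to $m$. The clean fix is the paper's explicit WLOG $x\in X_v$, $x'\in X_{v'}$ (harmless under $\asymp$), after which $p(\alpha^{-n}x)\in\bar B(\alpha^{-n}v,1/2)\subset S_0\cup\{m\}$. You should also note that $\Pi_{X_{v_1}}(\alpha^{-n}x)\subset Y_e$ — which you use so that $\gamma^n$ fixes it — follows from the same observation; the finitely many $n$ where this might fail are absorbed in the $\asymp$ constants. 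These are presentation points rather than mathematical gaps.
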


\begin{rema}
	Note that the existence of $\gamma$ is guaranteed by the fact that, in a $\varphi$-adapted metric decomposition of $G$, the action of every element of $E$  is essentially trivial in restriction to $Y_e$ (Item~(4) from \autoref{MSW}).
\end{rema}
 
\begin{proof}
	For simplicity, we write $Y = X_{v_1}$.
	Recall that $\alpha^n\beta^{-n}$ and $\gamma^n \beta^{-n}$ belong to $G$,  which acts isometrically on $X$.
	Hence, without loss of generality, we can assume that $x \in X_v$, $x' \in X_{v'}$.
	
	For every $n \in \N$, denote by $p_n$ and $q_n$ projections of $\alpha^{-n}x$  on $Y$  and of $x$  on  $\alpha^nY$ respectively.
	By assumption, for sufficiently large $n \in \N$, the vertex $v_1$ lies on the geodesic $[\alpha^{-n}v, \beta^{-n}v']$, and similarly $\alpha^nv_1 \in [v,\alpha^n \beta^{-n}v']$.
	Consequently, $p_n$ and $q_n$ actually belong to $Y_e$ and $\alpha^nY_e$.
	Moreover, combining \autoref{res: dist decomposition single periph} with the quasi-equivariance of projections, we get
	\begin{align*}
    		\dist x{\alpha^n \beta^{-n}x'}
    		& \asymp \dist x{q_n} + \dist {q_n}{\alpha^n \beta^{-n}x'} \\
    		& \asymp \dist x{\alpha^n p_n} + \dist {\alpha^np_n}{\alpha^n \beta^{-n}x'}.
    	\end{align*} 
	Recall that $\gamma$ fixes $p_n$.
	Hence 
	\begin{equation*}
    		\dist x{\alpha^n \beta^{-n}x'}
    		\asymp \dist x{\alpha^n \gamma^{-n}p_n} + \dist {\alpha^n\gamma^{-n}p_n}{\alpha^n \beta^{-n}x'}.
    	\end{equation*} 
	It follows from transversality that the sequence  $(p_n)$ is bounded.
	Since every $\alpha^n \gamma^{-n} \in G$ is an  isometry  of $X$, we get
	\begin{align*}
    		\dist x{\alpha^n \beta^{-n}x'}
		& \asymp \dist x{\alpha^n \gamma^{-n}y} + \dist {\alpha^n\gamma^{-n}y}{\alpha^n \beta^{-n}x'} \\
    		& \asymp \dist x{\alpha^n \gamma^{-n}y} + \dist {y}{\gamma^n \beta^{-n}x'}. \qedhere
    	\end{align*} 
	
\end{proof}

\subsection{Proof of \autoref{res: combination growth}}
\label{sec: proof combination growth}

 We can now prove \autoref{res: combination growth}. We let 
	$\mathcal Y = \set{ X_v}{v \in V_1}$
be the (global) peripheral structure provided by 
\autoref{res: global peripheral structure}.
To simplify notations, we fix for every $Y \in \mathcal Y$ a projection map $q_Y \colon X \to Y$, i.e. we choose a point $q_Y(x) \in \Pi_Y(x)$ for every $x \in X$.

We start the proof of  \autoref{res: combination growth} by  the following statement, which shows that  $E$ satisfies the `Palangres' property of \autoref{palgeo2} with respect to $X$. The `Classes' property will be proved in \autoref{classconj}.
Recall 
 that, by assumption,  $E_v$ has \total{} with respect to $X_v$ for every $v \in V$,
with spectrum $\Lambda_{v}$ and palangre spectrum $\Lambda_{\mathrm{pal},v}$.

\begin{prop}
\label{res: growth general/general}
    Let $x \in X$.
	Let $\alpha, \beta \in E$ with $\pi(\alpha) = \pi(\beta)$ positive.
	There exists $(d,\lambda) \in \N \times [1, \infty)$ such that   $ \dist x{\alpha^n \beta^{-n}x}$ grows like $n^d \lambda^{n\pi(\alpha)}$.
	
	Moreover, denoting   
	\begin{equation*}
	    \breve \Lambda_{\rm pal}= {\bigcup_{v \in V} \Lambda_{\mathrm{pal},v}},
	\end{equation*}
	we have  $(d, \lambda) \in \breve\Lambda_{\rm pal}$ if   $\alpha$ and $\beta$ both act elliptically on $T$, and $(d, \lambda) \in \breve\Lambda_{\rm pal}^+$ otherwise.
\end{prop}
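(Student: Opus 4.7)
I would prove the proposition by induction on a ``dynamical complexity'' of the pair $(\alpha,\beta)$ acting on the tree $T$. The base case is when $\alpha$ and $\beta$ share a common fixed vertex of $T$, where we invoke the assumed local palangre growth in $E_v$. The inductive step uses \autoref{chacal} to split the palangre into two simpler ones, each involving a newly-introduced elliptic generator.

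For the base case, suppose $\alpha$ and $\beta$ both fix a common vertex $v \in V$, so $\alpha,\beta \in E_v$. By convexity of $X_v$ in $X$ (Item~\ref{enu: MSW - vertex} of \autoref{def: metric decomposition}), distances between points of $X_v$ are the same whether computed in $X$ or in $X_v$. Since $\alpha^n\beta^{-n} \in G_v$ stabilizes $X_v$, for $x \in X_v$ the sequence $\dist{x}{\alpha^n\beta^{-n}x}$ can be computed inside $X_v$, and the assumption that $E_v$ has \total{} with respect to $X_v$ gives $(d,\lambda) \in \Lambda_{\mathrm{pal},v} \subset \breve\Lambda_{\rm pal}$ with $\dist{x}{\alpha^n\beta^{-n}x} \asymp n^d \lambda^{n\pi(\alpha)}$. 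For arbitrary $x \in X$ the result follows by a triangle-inequality argument after translating to $X_v$, using that $\alpha^n\beta^{-n}$ is an isometry. Since $\alpha$ and $\beta$ are both elliptic in this case, this fits the first alternative of the conclusion.

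For the inductive step, assuming $\alpha$ and $\beta$ have no common fixed vertex, I would produce an edge $e = v_0 v_1$ of $T$ with $v_1 \in V_1$ parabolic so that the hypotheses of \autoref{chacal} hold for some vertices $v, v'$ corresponding to $p(x), p(x')$. The key point is to arrange that $\alpha^{-n}v$ and $\beta^{-n}v'$ eventually lie on opposite sides of the midpoint $m_e$, and that $v_1$ is not $\alpha$-periodic (or, after the symmetry $\dist x{\alpha^n\beta^{-n}x'} = \dist{\beta^n\alpha^{-n}x}{x'}$, not $\beta$-periodic). The asymptotic behavior of $\alpha^{-n}v$ is controlled by \autoref{res: geo starting at non-periodic pt}: it escapes towards an end of the axis when $\alpha$ is hyperbolic, or towards the projection of $v$ on the fixed subtree when $\alpha$ is elliptic. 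The existence of a non-$\alpha$-periodic parabolic vertex in the relevant segment is then an acylindricity argument (\autoref{res: pt stabilizer ray}): if every parabolic vertex on a long segment were periodic for both $\alpha$ and $\beta$, some common power would fix a segment of length $\geq 3$ in $T$, contradicting the bounded stabilizer property.

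Once such an $e$ is chosen, \autoref{chacal} yields
\[
\dist{x}{\alpha^n\beta^{-n}x'} \asymp \dist{x}{\alpha^n\gamma^{-n}y} + \dist{y}{\gamma^n\beta^{-n}x'},
\]
where $\gamma \in G\alpha$ fixes $Y_e$ pointwise, hence is elliptic at both $v_0$ and $v_1$ (whose existence is guaranteed by essential triviality of the $E$-action on edge spaces, Item~\ref{enu: MSW - edge} of \autoref{def: metric decomposition}). Each of the two resulting palangres has one elliptic generator with fixed vertex $v_1$, making the pair strictly simpler; invoking the induction hypothesis provides growth types $(d_i, \lambda_i)$ for each summand, and the sum inherits the maximum of these two growth types. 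The ``$+$'' in $\breve\Lambda_{\rm pal}^+$ arises precisely when $\alpha$ or $\beta$ is hyperbolic on $T$: the linear tree-translation contributes an additional factor of $n$ that can turn a local polynomial growth $(d,1)$ into $(d+1,1)$ in the bookkeeping. The main obstacle will be the combinatorial argument producing the splitting edge $e$ --- ensuring both that it separates the asymptotic trajectories and that its parabolic endpoint is not periodic --- together with verifying that the induction actually decreases the chosen complexity and terminates.
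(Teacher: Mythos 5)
Your treatment of the elliptic-elliptic case and the use of \autoref{chacal} are broadly in line with the paper, but there is a genuine gap in the hyperbolic cases: the proposed induction does not terminate when $\alpha$ (or $\beta$) acts hyperbolically on $T$. The element $\gamma$ produced by \autoref{chacal} fixes the edge $e$ pointwise and is therefore \emph{elliptic}. Applying the lemma to a hyperbolic-elliptic pair $(\alpha,\beta)$ yields the two pairs $(\alpha,\gamma)$ and $(\gamma,\beta)$; the second is elliptic-elliptic, but the first is \emph{again} hyperbolic-elliptic, and nothing in your scheme makes it strictly simpler. Iterating \autoref{chacal} keeps producing new elliptic $\gamma$'s supported on the axis of $\alpha$ while leaving $\alpha$ unchanged, so the recursion never reaches your base case of a common fixed vertex. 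This is exactly why the paper does \emph{not} use \autoref{chacal} in the hyperbolic-elliptic case: there one chooses a parabolic vertex $u$ on a fundamental domain $[w,\alpha w]$ of the axis of $\alpha$ (with $w$ the projection to the axis of a fixed vertex $v$ of $\beta$), observes that $u,\alpha u,\dots,\alpha^{n-1}u$ lie in order on $[v,\alpha^n\beta^{-n}v]$, and applies \autoref{res: dist decomposition several periph} to express $\dist x{\alpha^n\beta^{-n}x}$ as a \emph{sum} of roughly $n$ terms $\dist{\alpha^k q_Y(\alpha^{-1}y)}{\alpha^k q_{\alpha Y}(y)}$, plus a single boundary term (handled via the elliptic-elliptic case). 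The hyperbolic-hyperbolic case then reduces to this via \autoref{chacal} or to the commuting case via acylindricity.

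This also shows that your explanation of the ``$+$'' in $\breve\Lambda_{\rm pal}^+$ is incorrect: it is not an additive $O(n)$ error from tree translation. The degree increase comes from summing the $n$ terms above: each term grows like $k^{d_0}\lambda_0^{k\pi(\alpha)}$ for a fixed $(d_0,\lambda_0)\in\Lambda_{\mathrm{pal},v}$, and by \autoref{est} the sum $\sum_{k=1}^n k^{d_0}\lambda_0^k$ grows like $n^{d_0}\lambda_0^n$ when $\lambda_0>1$ but like $n^{d_0+1}$ when $\lambda_0=1$ --- exactly the operation $\Delta\mapsto\Delta^+$. An extra linear additive error (as in your heuristic) is simply absorbed when growth is superlinear and otherwise only yields linear growth, which is already in $\breve\Lambda_{\rm pal}^+$. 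Two minor further gaps: in the elliptic-elliptic base case you need only $\per\alpha\cap\per\beta\neq\emptyset$ (i.e.\ some powers $\alpha^k,\beta^k$ fix a common vertex, not $\alpha,\beta$ themselves), which then requires \autoref{rk: growth-type-power}; and the acylindricity argument you sketch for finding a non-$\alpha$-periodic $u$ is superfluous, since once $\per\alpha$ and $\per\beta$ are disjoint any parabolic vertex $u\neq v$ on the geodesic between them is automatically outside $\per\alpha$, with the induction descending on the distance between periodic point sets rather than on an unspecified ``dynamical complexity.''
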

 
By Remarks~\ref{rk: growth-type-power} and \ref{rem: spectrum-power}, the reader not interested in the spectra may ignore the terms $\pi(\alpha)$ and $\pi(\beta)$.

We split the proof into three cases, depending on the nature of $\alpha$ and $\beta$ acting as isometries of $T$: both elliptic, 
hyperbolic/elliptic,
or both hyperbolic. 

Since $\dist x{\alpha^n\beta^{-n}x}=\dist x{\beta^n\alpha^{-n}x}$, we will be free to swap $\alpha$ and $\beta$ when needed.

\subsubsection{Elliptic-elliptic pairs}
\label{ee}

	The proof in this case is by induction on the distance between the sets of periodic points $\per \alpha$ and $\per \beta$ in $T$.
	Suppose first that $\per \alpha \cap \per \beta \neq \emptyset$.
	By definition there exist $k \in \N \setminus\{0\}$ and a vertex $v$ of $T$  such that both $\alpha^k$ and $\beta^k$ belong to $E_v$. 
	 Since $\varphi$ sends $G_v$   to a conjugate, 
	 the images of $\alpha^k$ and $\beta^k$ under the canonical projection $E_v \onto \Z$ are both equal to $k \pi(\alpha)$, even though $\alpha$ and $\beta$ do not necessarily belong to $E_v$.
	According to the assumption on  $E_v$, there is $(d,\lambda) \in \Lambda_{\mathrm{pal},v}\inc\breve \Lambda_{\rm pal}$ such that, for every $x \in X_v$,
	\begin{equation*}
		\dist x{\alpha^{kn} \beta^{-kn} x} \asymp n^d\lambda^{kn\pi (\alpha)}.
	\end{equation*}
	(Recall that $X_v$ is convex in $X$, hence the asymptotic behavior  of growth is the same, regardless of whether it is computed in $X_v$ or in $X$.)
	The conclusion now follows from \autoref{rk: growth-type-power}. 
	
	Suppose now that $\per \alpha$ and $\per \beta$ are disjoint.
	Denote by $\geo vw$ the shortest geodesic from $\per\alpha$ to $\per\beta$.
	Let $u \in V_1$ be a parabolic vertex on $\geo vw$.
	Up to permuting $\alpha$ and $\beta$, we may assume that $u \neq v$.
	We write $e$ for the first edge of $\geo uv$.
	We choose $\gamma \in G\alpha$ acting as the identity on $Y_e$ (this is possible by essential triviality of the action on edge spaces).
	In particular $\gamma$ fixes $e$, thus its endpoints.
	Therefore the distance between $\per \alpha$ and $\per \gamma$ -- \resp between $\per \gamma$ and $\per \beta$ -- is smaller than the one between $\per \alpha$ and $\per \beta$.
	
	Let $x \in X$.
	By induction, there are $(d_1, \lambda_1), (d_2, \lambda_2) \in \breve \Lambda_{\rm pal}
$ such that 
	\begin{equation*}
		\dist x{\alpha^n \gamma^{-n}x} \asymp n^{d_1} \lambda_1^{n \pi(\alpha)}
		\quad \text{and} \quad
		\dist x{\gamma^n \beta^{-n}x} \asymp n^{d_2} \lambda_2^{n \pi(\beta)}.
	\end{equation*}
	By construction, no power of $\alpha$ fixes $e$.
	It follows from \autoref{chacal} that 
	\begin{equation*}
		\dist x{\alpha^n \beta^{-n}x} 
		\asymp \dist x{\alpha^n \gamma^{-n}x}  + \dist x{\gamma^n \beta^{-n}x} 
		\asymp n^d \lambda^{n \pi(\alpha)},
	\end{equation*}
	where $(d, \lambda)$ is the largest pair between $(d_1, \lambda_1)$ and $(d_2, \lambda_2)$.
This proves \autoref{res: growth general/general} in the elliptic/elliptic case.

\begin{coro}
\label{res: growth edge spaces}
	Suppose that   $(y,y') \in Y_e \times Y_{e'}$, with $e,e'$   two edges of $T$.
	For every $\alpha \in E$  with $\pi(\alpha) \geq 1$, there is $(d, \lambda) \in \breve \Lambda_{\rm pal}$ such that $  \dist{\alpha^n y}{\alpha^n y'}$ grows like $n^d \lambda^{n \pi(\alpha)}$.
\end{coro}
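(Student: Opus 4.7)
The plan is to reduce the corollary to the elliptic/elliptic case of \autoref{res: growth general/general} established in \autoref{ee}. The main tool is essential triviality on edge spaces (Item~(4) of \autoref{MSW}): given $\alpha \in E$, there exist $g, g' \in G$ such that $\alpha$ agrees with $g$ on $Y_e$ and with $g'$ on $Y_{e'}$. Setting $\gamma = g^{-1}\alpha$ and $\gamma' = (g')^{-1}\alpha$, I obtain two elements of $G\alpha$ which fix $Y_e$ and $Y_{e'}$ pointwise, respectively. In particular $\gamma$ fixes $m_e$ and $\gamma'$ fixes $m_{e'}$, so both act \emph{elliptically} on $T$, and $\pi(\gamma) = \pi(\gamma') = \pi(\alpha)$.

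The key computation is then to rewrite $\dist{\alpha^n y}{\alpha^n y'}$ as a palangre expression. Using $\gamma^n y = y$ and $(\gamma')^n y' = y'$, one gets
\[
\alpha^n y = (\alpha^n \gamma^{-n}) y \quad \text{and} \quad \alpha^n y' = (\alpha^n (\gamma')^{-n}) y'.
\]
Both of the elements $\alpha^n \gamma^{-n}$ and $\alpha^n (\gamma')^{-n}$ belong to $G$ (their $\pi$-images vanish), hence act isometrically on $X$. Factoring out the isometry $\alpha^n \gamma^{-n}$ yields
\[
\dist{\alpha^n y}{\alpha^n y'} = \dist{y}{\gamma^n (\gamma')^{-n} y'}.
\]

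To conclude, I will note that $\gamma^n (\gamma')^{-n} \in G$ acts isometrically, so $\dist{y}{\gamma^n (\gamma')^{-n} y'}$ and $\dist{y}{\gamma^n (\gamma')^{-n} y}$ differ by at most the constant $\dist{y}{y'}$, hence have the same growth type in the sense of $\asymp$. Applying the elliptic/elliptic case of \autoref{res: growth general/general} to the pair $(\gamma, \gamma')$ with basepoint $y$ produces $(d, \lambda) \in \breve\Lambda_{\mathrm{pal}}$ with $\dist{y}{\gamma^n (\gamma')^{-n} y} \asymp n^d \lambda^{n\pi(\alpha)}$, yielding the desired estimate. There is no serious obstacle here; the only conceptual point is recognizing that the distance between $\alpha^n$-translates of two distinct edge-space points can be converted into a genuine palangre expression between two elliptic elements, at which stage the already-proven case does the work.
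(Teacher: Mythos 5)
Your proof is correct and takes essentially the same route as the paper's: choose $\gamma,\gamma'\in G\alpha$ fixing $Y_e,Y_{e'}$ pointwise by essential triviality, observe they are elliptic, rewrite $\dist{\alpha^n y}{\alpha^n y'}=\dist{y}{\gamma^n(\gamma')^{-n}y'}$, and invoke the elliptic-elliptic case of \autoref{res: growth general/general}. The only addition is your (correct) explicit remark that changing the basepoint from $y'$ to $y$ shifts the distance by at most the constant $\dist{y}{y'}$, which the paper leaves implicit in ``the result follows.''
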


\begin{proof}
	Choose $\gamma, \gamma' \in G\alpha$ fixing $Y_e$ and $Y_{e'}$ pointwise  respectively,  hence acting elliptically on $T$. 
	Then 
	\begin{equation*}
		\dist{\alpha^n y}{\alpha^n y'}
		= \dist{\alpha^n \gamma^{-n}y}{\alpha^n {\gamma'}^{-n}y'}
		= \dist y{\gamma^n{\gamma'}^{-n} y'},
	\end{equation*}
and the result follows. 
\end{proof}

\subsubsection{Hyperbolic-elliptic pairs}	

We now assume that  $\alpha$ is hyperbolic and $\beta$ elliptic (for their action on $T$). 
	Let $v$ be a vertex of $T$ fixed by $\beta$, and $w$ its projection on the axis of $\alpha$.
	Let $u \in V_1$ be a parabolic vertex on the geodesic $\geo w{\alpha w}$.
	Observe that $u, \alpha u, \dots, \alpha^{n-1} u$ are aligned in this order along the geodesic $\geo v{\alpha^n\beta^{-n}v}$.

    Fix $x \in X_v$, $y \in X_u$, and  set $Y=X_u$.
	According to \autoref{res: dist decomposition several periph} we get
	\begin{align*}
    		\dist x{\alpha^n \beta^{-n}x}
    		 =\ &\dist x{q_Y(x)} + \dist{q_Y(x)}{q_{\alpha Y}(y)} \\
    		& + \sum_{k = 1}^{n-2} \dist{q_{\alpha^k Y}(\alpha^{k-1}y)}{q_{\alpha^{k+1}Y}(\alpha^k y)} \\
    		& + \dist{q_{\alpha^{n-1}Y}(\alpha^{n-2}y)}{\alpha^n\beta^{-n}x} + O(n),
    	\end{align*}
	where $O(n)$ grows  {at most} linearly in absolute value.
	Using quasi-equivariance of projections we can write
	\begin{align*}
    		\dist x{\alpha^n \beta^{-n}x}
    		= & \sum_{k = 1}^{n-2} \dist{\alpha^k q_Y(\alpha^{-1}y)}{\alpha^kq_{\alpha Y}(y)} \\
    		& + \dist{\alpha^nq_{\alpha^{-1}Y}(\alpha^{-2}y)}{\alpha^n\beta^{-n}x} + O(n).
    	\end{align*}
	We study the growth of each term separately.
	
	First observe that $q_Y(\alpha^{-1}y)$ and $q_{\alpha Y}(y)$   belong to  $Y_e$ and $Y_{e'}$ for some   edges $e,e'$ of $T$.
	By \autoref{res: growth edge spaces}, there is $(d_0,\lambda_0) \in \breve \Lambda_{\rm pal}$ such that 
	\begin{equation*}
		\dist{\alpha^k q_Y(\alpha^{-1}y)}{\alpha^kq_{\alpha Y}(y)} \asymp k^{d_0}\lambda_0^{k\pi(\alpha)}
	\end{equation*} 
	as $k$ tends to infinity. 
	The sum of these terms when $k$ runs over $\intvald 1{n-2}$ then grows like $n^{d_1}\lambda_1^{n\pi(\alpha)}$ 
	where $(d_1, \lambda_1) = (d_0, \lambda_0)$ if $\lambda_0 > 1$, and $(d_1, \lambda_1) = (d_0 + 1, 1)$ otherwise.
	In both cases $(d_1, \lambda_1)$ belongs to $\breve\Lambda_{\rm pal}^+$.
	
	For the second term, we choose $\gamma \in  G\alpha$ fixing $q_{\alpha^{-1}Y}(\alpha^{-2}y)$.
	Since $\alpha^n \gamma^{-n} \in G$ acts isometrically on  $X$, we have 	
	\begin{align*}
    		\dist{\alpha^nq_{\alpha^{-1}Y}(\alpha^{-2}y)}{\alpha^n\beta^{-n}x}
    		& = \dist{\alpha^n\gamma^{-n}q_{\alpha^{-1}Y}(\alpha^{-2}y)}{\alpha^n\beta^{-n}x} \\
    		& = \dist{q_{\alpha^{-1}Y}(\alpha^{-2}y)}{\gamma^n\beta^{-n}x}.
    	\end{align*}
	Both $\beta$ and $\gamma$ act elliptically on $T$, and $\pi(\gamma)=\pi(\alpha)=\pi(\beta)$, so we have seen that 
	the above term grows like $n^{d_2}\lambda_2^{n\pi(\beta)}$ for some $(d_2, \lambda_2) \in \breve \Lambda_{\rm pal}$.
	
	Combining our two estimates, we observe that, up to an    error term which grows  {at most} linearly,  $\dist x{\alpha^n \beta^{-n}x}$ grows like $n^d\lambda^{n\pi(\alpha)}$ where $(d,\lambda) \in \breve\Lambda_{\rm pal}^+$ is the largest pair between $(d_1, \lambda_1)$ and $(d_2, \lambda_2)$.
	
	The linear error may be neglected if $n^d\lambda^{n\pi(\alpha)}$ grows at least quadratically. 
	If not, we find that $\dist x{\alpha^n \beta^{-n}x}$ grows at most linearly. 
	However $\dist v{\alpha^n \beta^{-n}v} = \dist v{\alpha^nv}$ grows linearly because $\alpha$ acts hyperbolically on $T$.
	Since the projection $p \colon X \to T$ is Lipschitz, it follows that $\dist x{\alpha^n \beta^{-n}x}$ grows at least linearly, hence exactly linearly.
	Recalling that the growth type $(1,1)$, corresponding to linear growth,  belongs to $\breve\Lambda_{\rm pal}^+$, the proposition in the hyperbolic-elliptic case  follows.

\subsubsection{Hyperbolic-hyperbolic pairs}

   The last case is when  $\alpha$ and $\beta$ both act hyperbolically on $T$.  Suppose first that the intersection of the respective axes of $\alpha$ and $\beta$ has infinite length.
       The group generated by $\alpha$ and $\beta$  then fixes an end of $T$. By acylindricity (\autoref {res: pt stabilizer ray}) it  is virtually cyclic (see Lemma 7.9 of    \cite{Guirardel:2017te}), so there exists $k$ 
    such that 
      $\alpha^k$ and $\beta^k$ commute.

	For $x\in X$ we then have 
	\begin{equation*}
		\dist x{\alpha^{kn} \beta^{-kn}x}
		=\dist x{g^nx}
	\end{equation*}
	where $g = \alpha^{k}\beta ^{-k}$ belongs to $G$.
	But $G$ acts by isometries on $X$.
	It follows from the triangle inequality that $\dist x{g^nx}$ grows at most linearly.
	However, the analysis of the previous subsections, 
	applied with $\alpha' = g$ and $\beta' = 1$, ensures that $\dist x{g^nx}$ grows exactly like a polynomial,
	hence 
	is either bounded or grows linearly.
	The result follows  from \autoref{rk: growth-type-power} 
	since the growth types 
	$(0,1)$ and $(1,1)$ belong to $\breve\Lambda_{\rm pal}^+$.%
	
	We now assume that the axes of $\alpha$ and $\beta$ have   empty or bounded intersection.
	We fix two vertices $v$ and $w$ on the respective axes of $\alpha$ and $\beta$.
	There exists an edge $e$ of $T$ such that, for all but finitely many $n \in \N$, the edge $e$ lies on the geodesic
	$[\alpha^{-n} v,\beta^{-n}w]$. 
	Let $\gamma \in G\alpha$ acting as the identity on $Y_e$.
	Up to permuting $\alpha$ and $\beta$, we get from \autoref{chacal} that 
	\begin{equation*}
		\dist x{\alpha^n\beta^{-n}x} 
		\asymp \dist x{\alpha^n \gamma^{-n}x} + \dist x{\gamma^n \beta^{-n}x}.
	\end{equation*}
	Recall that $\alpha$ and $\beta$ are hyperbolic (for their action on $T$) while $\gamma$ is elliptic.
    Thus there are $(d_1, \lambda_1), (d_2, \lambda_2) \in \breve\Lambda_{\rm pal}^+$ such that the two terms in the right hand side of the above estimate grow like $n^{d_1}\lambda_1^{n \pi(\alpha)}$ and $n^{d_2}\lambda_2^{n \pi(\beta)}$ respectively, and the result follows, completing the proof of \autoref{res: growth general/general}.

\subsubsection{Growth of classes}\label{classconj}

Combined with \autoref{res: growth general/general}, the next two propositions complete the proof of \autoref{res: combination growth}.

\begin{prop}\label{clas elliptic}
    Let $g \in G$.
    If $g$ fixes a vertex $v$ in $T$, then $\norm[X]{\phi^n (g)} \asymp \norm[X_v]{\phi^n (g)}$
\end{prop}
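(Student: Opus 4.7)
The inequality $\norm[X]{\phi^n(g)} \leq \norm[X_v]{\phi^n(g)}$ is tautological after reducing to $G_v$: since $\phi$ acts trivially on $T/G$ (a feature of any $\phi$-adapted metric decomposition), for each $n \in \N$ one can write $\phi^n(g) = g_n h_n g_n^{-1}$ with $h_n \in G_v$, and both translation lengths are conjugation-invariant (after interpreting $\norm[X_v]{\phi^n(g)}$ via $X_{g_n v} = g_n X_v$). It therefore suffices to establish a uniform bound $\norm[X_v]{h} \leq \norm[X]{h} + C$ for every $h \in G_v$, with $C$ depending only on the metric decomposition.

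I would fix such an $h$, pick $x_0 \in X$ with $\dist{x_0}{h x_0} \leq \norm[X]{h} + 1$, set $w = p(x_0)$, and consider the median $m$ in $T$ of the triple $\{v, w, hw\}$. Since $h$ fixes $v$ and interchanges $w$ with $hw$, the median $m$ is $h$-fixed and lies on the tree geodesic $[w, hw]$. If $m = v$, the geodesic $[x_0, hx_0]$ in $X$ projects via the Lipschitz map $p$ to a path containing $[w, hw]$, and therefore meets $p^{-1}(v) \subset X_v$ at some point $x_1$; the triangle inequality gives $\dist{x_1}{hx_1} \leq \dist{x_1}{hx_0} + \dist{hx_0}{hx_1} \leq \dist{x_0}{hx_0}$, hence $\norm[X_v]{h} \leq \norm[X]{h} + 1$. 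If instead $m \neq v$, then $h$ pointwise fixes the segment $[v, m] \subset T$; the acylindricity of the $G$-action on $T$ given in \autoref{res: pt stabilizer ray}, combined with the torsion-freeness of $G$ in the toral relatively hyperbolic application, forces $d(v, m) \leq 2$, producing a parabolic vertex $u \in V_1$, adjacent to $v$, fixed by $h$, and lying on $[v, m]$. Picking $x_1 \in [x_0, hx_0]$ with $p(x_1) = m$ yields $\dist{x_1}{hx_1} \leq \dist{x_0}{hx_0}$ as before; one then projects $x_1$ onto $X_u \in \caly$ — at a controlled additive cost thanks to the contracting property (\autoref{rem: Lipschitz proj}) together with quasi-equivariance of projections (\autoref{def: compatible structure}) — and finally projects the image onto the edge flat $Y_e \subset X_v$ for $e = \{v, u\}$. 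Because the parabolic vertex space $X_u$ is CAT(0) (a product of a Euclidean torus universal cover with a metric star, with further edge flats glued in along convex subspaces) and $Y_e$ is a closed convex $h$-invariant subspace, this last projection is $1$-Lipschitz and $h$-equivariant, producing $y_1 \in Y_e \subset X_v$ with $\dist{y_1}{hy_1} \leq \dist{x_0}{hx_0} + C$.

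The main obstacle is the case $m \neq v$: the geodesic $[x_0, hx_0]$ can miss $X_v$ entirely, forcing one to bridge back from the median vertex space $X_m$ to $X_v$. This requires combining three ingredients — the tree geometry of $T$ (via acylindricity, to bound the length of $[v, m]$), the global peripheral structure $\caly$ (to project contractingly onto the parabolic vertex space $X_u$), and the local CAT(0) product structure of parabolic vertex spaces (to obtain the $1$-Lipschitz projection onto the edge flat $Y_e$). The torsion-free hypothesis of the toral relatively hyperbolic setting is used precisely to rule out the long-segment configurations that acylindricity alone would only constrain to finite-order stabilizers.
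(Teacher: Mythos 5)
Your argument diverges substantially from the paper's and contains genuine gaps, the most serious of which is a scope mismatch: Proposition~\ref{clas elliptic} is part of the proof of Theorem~\ref{res: combination growth}, which is stated for an \emph{arbitrary} group $G$ equipped with a $\varphi$-adapted metric decomposition. Your treatment of the case $m\ne v$ invokes torsion-freeness of $G$ to turn the bounded-order conclusion of Remark~\ref{res: pt stabilizer ray} into triviality of the stabilizer of $[v,m]$, and it invokes the CAT$(0)$ product structure of the abelian vertex spaces (to get a $1$-Lipschitz $h$-equivariant projection onto $Y_e$). Neither hypothesis is available in a general $\phi$-adapted metric decomposition; Definition~\ref{def: metric decomposition} only gives convexity of the $X_v$, and Remark~\ref{rem: metric decomposition free splitting} explicitly contemplates using this machinery for free splittings of arbitrary groups. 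So as written your proof only covers the specific $X$ of Theorem~\ref{MSW}, not the stated generality.

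Even restricting to that setting, the case $m\ne v$ has a further problem: you assert a parabolic vertex $u\in V_1$ adjacent to $v$ and lying on $[v,m]$. This requires $v\in V_0$; if $v\in V_1$ the first interior vertex of $[v,m]$ is in $V_0$ and no such $u$ exists. (That subcase is actually easy --- $X_v\in\mathcal{Y}$ is itself contracting, so you could just project onto $X_v$ --- but you do not say this, and the fix changes the structure of the argument.) You also invoke quasi-equivariance of projections for $h$, which is superfluous: $h\in G$ acts isometrically, so projections are exactly $h$-equivariant.

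The decisive observation you are missing is what makes the paper's proof short and hypothesis-free. The paper first reduces to the case where the conjugated elements $g_n=\alpha^ng\alpha^{-n}$ lie in $G_v$ (choosing $\alpha$ fixing $v$, possible since $\phi$ is trivial on $T/G$), and then splits into two cases. If $g$ fixes some parabolic vertex, one may take $v\in V_1$; then $X_v$ is a contracting set of the global peripheral structure and Remark~\ref{rem: growth in qc sbgp} gives $\norm[X]{g_n}\asymp\norm[X_v]{g_n}$ immediately. If $g$ fixes no parabolic vertex, then because $T$ is bipartite the fixed subtree of $g$ is a single vertex $v\in V_0$; hence $v$ is the \emph{unique} vertex fixed by $g_n$, so it is the midpoint of $[w,g_nw]$ for every $w$, every geodesic $[x,g_nx]$ in $X$ crosses $X_v$, and the triangle inequality gives the exact equality $\norm[X]{g_n}=\norm[X_v]{g_n}$. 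In your notation this is exactly the statement that $m=v$ always in that case, so the median analysis, acylindricity, torsion-freeness, and CAT$(0)$ projections are all unnecessary.
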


\begin{proof}
Note that 
    $\norm[X]{\phi^n (g)}$ is equal to $\norm[X]{\alpha^n g \alpha^{-n}}$ for any $\alpha$ such that $\pi(\alpha)=1$. For any such $\alpha$, we will write $g_n =  
	\alpha^n g \alpha^{-n}
		$.
	We distinguish two cases.
	
	Assume first that $g$ fixes a parabolic vertex $v \in V_1$.
By suitably choosing    $\alpha$,
we can assume that $\alpha$ fixes $v$ as well, so that $g_n \in G_v$ for every $n \in \N$.
    Note that $\norm[X]{g_n} \asymp \norm[X_v]{g_n}$ because $X_v$ is contracting  (see \autoref{rem: growth in qc sbgp}) whence the result.

	Suppose now that $g$ is elliptic, but fixes no parabolic vertex.
	Since $T$ is bipartite, $g$ fixes a unique non-parabolic vertex $v \in V_0$.
	As before, we can   assume that $\alpha$ fixes $v$ as well, and we then claim the actual equality
	\begin{equation*}
		\norm[X] {g_n} = \norm[X_v]{g_n}, \quad \forall n \in \N.
	\end{equation*}
	
	Let $n \in \N$.
	Let $x \in X$.
	It   belongs to $X_w$ for some vertex $w \in T$.
	Recall that $v$ is the unique vertex of $T$ fixed by $g$.
	Since $\alpha$ also fixes $v$,
the point $v$ is the unique vertex of $T$ fixed by  $g_n= {\alpha^n g\alpha^{-n}}$.
	Thus it is the midpoint of the geodesic $\geo w{g_nw}$.
	Therefore any geodesic $\geo x{g_nx}$ crosses $X_v$, say at a point $y$.
	It then follows  from the triangle inequality that
	\begin{equation*}
	    \norm[X_v]{g_n}
		\leq \dist y{g_ny}
		\leq \dist y{g_nx} + \dist {g_nx}{g_ny}
		= \dist xy + \dist y{g_nx}
		= \dist x{g_nx}.
	\end{equation*}
	Taking the infimum over all points $x \in X$, we get $\norm[X_v]{g_n} \leq \norm[X]{g_n}$.
	The converse inequality is obvious, proving the claim.
\end{proof}

\begin{prop}\label{clas}
	For every $g \in G$, there is $(d,\lambda) \in
	\bigcup_{v \in V} (\Lambda_v\cup\Lambda_{\mathrm{pal},v})$ such that $\norm[X]{\phi^n (g)}$ grows like $n^d \lambda^{n  }$.
\end{prop}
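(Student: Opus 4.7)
The plan is to split according to whether $g$ acts on the tree $T$ as an elliptic or a hyperbolic isometry.

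Case~1 ($g$ elliptic) is essentially immediate from \autoref{clas elliptic}. Fix a vertex $v$ fixed by $g$. Since $\varphi$ is pure, there exists $g_1 \in G$ with $tv=g_1 v$, so $\alpha_v := g_1^{-1}t$ lies in $E_v$ with $\pi(\alpha_v)=1$. The conjugate $g_n := \alpha_v^n g \alpha_v^{-n}$ lies in $G_v$ and is $G$-conjugate to $\varphi^n(g)$; since $g_n\in G_v$ preserves $X_v$, \autoref{clas elliptic} yields $\norm[X]{\varphi^n(g)} = \norm[X]{g_n} \asymp \norm[X_v]{g_n}$. Writing $g_n = \tilde\varphi_v^n(g)$ for $\tilde\varphi_v$ the automorphism of $G_v$ induced by $\alpha_v$, the `Classes' part of the assumed total PolExp growth of $E_v$ on $X_v$ places the growth type in $\Lambda_v$.

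For Case~2 ($g$ hyperbolic on $T$) the strategy is to express $\norm[X]{\varphi^n(g)}$ via \autoref{res: computing translation length} and decompose the resulting distance into local palangres. Fix a parabolic vertex $v\in V_1$ on the axis of $g$ (which exists because $T$ is bipartite) and a projection $z\in\Pi_{gX_v}(X_v)$. The vertex $v_n := t^nv$ lies on the axis of $\varphi^n(g)=t^ngt^{-n}$; letting $z_n$ be a projection of $X_{v_n}$ onto $\varphi^n(g)X_{v_n}$, \autoref{res: computing translation length} gives $\norm[X]{\varphi^n(g)} \asymp \dist{z_n}{\varphi^n(g)z_n}$. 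The quasi-equivariance of projections in the global peripheral structure (\autoref{res: global peripheral structure}), combined with its separation property, shows that $z_n$ stays within uniformly bounded distance of $t^nz$. I will then apply \autoref{res: dist decomposition several periph} to the geodesic in $T$ from $\varphi^n(g)v_n$ to $\varphi^n(g)^2 v_n$: this geodesic has length $\dist[T]v{gv}$, \emph{independent of $n$}, and its parabolic vertices are $\varphi^n(g)t^n u_{2j}$, where $u_{2j}$ are the parabolic vertices on the geodesic from $v$ to $gv$. The decomposition therefore has a bounded number of terms, each measuring inside the non-parabolic vertex space $X_{\varphi^n(g)t^n u_{2j+1}}$ the distance between projections onto two adjacent peripheral components.

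The key step is to recognize each local contribution as a palangre inside $E_{u_{2j+1}}$. By purity and the essential triviality of the $E$-action on edge spaces (\autoref{MSW}(\ref{enu: MSW - edge})), I can choose $H_1\in G$ with $tu_{2j+1}=H_1u_{2j+1}$ that also describes the action of $t$ on the edge $e_j^0$ (from $u_{2j+1}$ toward $u_{2j}$), together with $H_1 s$, $s\in G_{u_{2j+1}}$, describing the action on the other edge $e_j^1$. The lifts $\xi := H_1^{-1}t$ and $\xi' := (H_1 s)^{-1}t$ then lie in $E_{u_{2j+1}}$ with $\pi$-value $1$. Iterating the defining relations gives $t^n u_{2j+1} = R_n(\varphi, H_1) u_{2j+1}$, and the $G$-isometry from $X_{u_{2j+1}}$ to $X_{\varphi^n(g)t^n u_{2j+1}}$ implemented by $\varphi^n(g)R_n(\varphi,H_1)$ pulls the two adjacent peripheral components back to $Y_{e_j^0}$ and $\xi^n(\xi')^{-n}\,Y_{e_j^1}$ (the drift $\xi^n(\xi')^{-n}\in G_{u_{2j+1}}$ is a direct computation). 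The local distance is therefore $\asymp\dist p{\xi^n(\xi')^{-n}p}$ for any fixed $p\in Y_{e_j^0}$, a palangre in $E_{u_{2j+1}}$; by the `Palangres' part of total PolExp growth of $E_{u_{2j+1}}$ on $X_{u_{2j+1}}$, its growth type lies in $\Lambda_{\mathrm{pal},u_{2j+1}}$. Summing the bounded number of terms places $\norm[X]{\varphi^n(g)}$ in $\bigcup_u \Lambda_{\mathrm{pal},u} \subset \bigcup_v (\Lambda_v\cup\Lambda_{\mathrm{pal},v})$, as required. The main obstacle is precisely this last step, namely extracting a genuine palangre structure from the quasi-isometric action of $t^n$ on vertex spaces; it is made tractable by the essential triviality of the $E$-action on edge spaces, which lets one convert $t^n$-translates of edges into explicit $G$-elements of the form $R_n(\varphi,H_1)$ and read off the drift as a palangre $\xi^n(\xi')^{-n}$ in $E_{u_{2j+1}}$.
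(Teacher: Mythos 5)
Your Case~1 (elliptic) matches the paper's \autoref{clas elliptic} reduction. Your Case~2 starts the same way as the paper (via \autoref{res: computing translation length} and quasi-equivariance, reducing to $\dist{t^n z}{t^n g z}$ with $z\in Y_e$, $gz\in Y_{ge}$), but then diverges: the paper simply observes that at this point one is estimating $\dist{\alpha^n y}{\alpha^n y'}$ for two points in edge spaces, which is exactly \autoref{res: growth edge spaces}; you instead re-derive this by hand with a decomposition along the axis of $g$. Your drift computation $\xi^n(\xi')^{-n}=R_n(\varphi,H_1)^{-1}R_n(\varphi,H_1 s)\in G_{u_{2j+1}}$ is correct and is the right algebraic content.

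There are two genuine gaps in your Case~2, though. First, the terms of the decomposition from \autoref{res: dist decomposition several periph} are \emph{not} distances ``inside the non-parabolic vertex space $X_{\varphi^n(g)t^n u_{2j+1}}$'': by separation, $x_j$ lies in the boundary component of $X_{\varphi^n(g)t^n u_{2j}}$ facing \emph{backwards} (towards $u_{2j-2}$), while $x_{j+1}$ lies in the boundary component of $X_{\varphi^n(g)t^n u_{2j+2}}$ facing $u_{2j}$. A geodesic from $x_j$ to $x_{j+1}$ therefore first crosses the parabolic space $X_{\varphi^n(g)t^n u_{2j}}$ and only then the non-parabolic $X_{\varphi^n(g)t^n u_{2j+1}}$. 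You would need a further decomposition (via \autoref{res: dist decomposition single periph}) splitting this into a parabolic contribution plus a non-parabolic one; the parabolic piece is again a palangre, so it still lands in $\bigcup_v\Lambda_{\mathrm{pal},v}$, but as written your assertion is not correct. Second, the final claim that the local piece is $\asymp\dist p{\xi^n(\xi')^{-n}p}$ for a \emph{fixed} $p$ is exactly what \autoref{res: growth edge spaces} is designed to package; you assert it but don't argue it. Justifying it requires tracking, via contraction, separation and quasi-equivariance, that the $n$-dependent projections $x_j$ stay within bounded distance of the $(\varphi^n(g)R_n(\varphi,H_1))$-image of a fixed point in $Y_{e^0_j}$. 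This is precisely the content of the chain \autoref{res: growth edge spaces} $\to$ elliptic--elliptic case of \autoref{res: growth general/general} $\to$ \autoref{chacal}, so the cleaner move (and the one the paper makes) is to cite \autoref{res: growth edge spaces} directly once you have reduced to two points in edge spaces, rather than re-proving it inline.
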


\begin{proof}
As in the previous proof we write $g_n =  
	\alpha^n g \alpha^{-n}
		$ for some $\alpha \in E$ with $\pi(\alpha) = 1$.
	In view of \autoref{clas elliptic}, we can assume that $g$ acts hyperbolically on $T$.
  
	Let $v \in V_1$ be a parabolic vertex along the axis of $g$.
	For simplicity we let $Y = X_v$.
	We fix $x \in g^{-1}Y$ and we let $z = q_Y(x)$, so that $z$ belongs to the projection of $g^{-1}Y$ on $Y$.
	Observe that $\alpha^n v$ is a parabolic vertex on the axis of $g_n$.
	Moreover $z_n = q_{\alpha^nY}(\alpha^n x)$ is a point in the projection of 
	$\alpha^ng\m Y=g_n^{-1}\alpha^n Y$ onto $\alpha^n Y$.
	Combining \autoref{res: computing translation length} and the quasi-equivariance of projections, we get 
	\begin{align*}
		\norm[X]{g_n} 
		\asymp \dist {z_n}{g_nz_n}
		& \asymp \dist {q_{\alpha^nY}(\alpha^n x)}{\alpha^n g\alpha^{-n}q_{\alpha^nY}(\alpha^n x)} \\
		& \asymp \dist {\alpha^nq_{Y}(x)}{\alpha^n gq_{Y}(x)}.
	\end{align*}
	By construction $q_{Y}(x)$ is   a point of $Y_e$ for a suitable edge $e$ starting at $v$,  so 
the result follows from \autoref{res: growth edge spaces}.
\end{proof}

\subsection{\Total{}  in the one-ended case}\label{sec: endgame-one-ended}

  We can now prove the     main result of \autoref{part: one-ended}.

\begin{theo}
\label{res: recap one-ended groups}
Let $G$ be a one-ended toral relatively hyperbolic group. Then every $\varphi\in\Aut(G)$ has \total{}. 

Moreover there exists $K$,  depending only on $G$, such that 
the spectrum $\Lambda $ of $\varphi$ (as well as its palangre spectrum) satisfies the following properties:
\begin{enumerate}
    \item For every $(d,\lambda)\in\Lambda $, one has $d\leq K$, and $\lambda$ is an algebraic unit of degree at most $K$.
    \item One has $|\Lambda |\leq K$.
 \item If $G$ is hyperbolic, 
every $(d,\lambda)$ in the spectrum is $(0,1)$ or $(1,1)$ (bounded or linear growth), or   is of the form $(0,\lambda)$, with $\lambda$ an $r^{\rm th}$ root of the dilation factor of a pseudo-Anosov homeomorphism on a compact surface $\Sigma$ 
 (with $r$ and $\abs{\chi(\Sigma)}$   bounded by $K$).
\end{enumerate}
\end{theo}

\begin{proof}

We have seen (\autoref{pjsj3}) that there exists $k\geq1$, depending only on $G$, such that $\varphi^k$ is pure in the sense of \autoref{rtl}.  In view of \autoref{puissok} and \autoref{rem: spectrum-power}, we may therefore  assume with no loss of generality  that $\varphi$ is pure.

    \autoref{MSW} provides a $\varphi$-adapted metric decomposition $p:X\to T$ of $G$ (where $T$ is the refined JSJ tree of $\varphi$ as in \autoref{pjsj3}).  
    The number of vertices in $T/G$ is bounded in terms of $G$ only by \autoref{access}.
    
    We have  established in \autoref{sommets} 
     that, for every vertex $v$ of $T$, if we    let $\psi_v$ be a representative of the outer class of $\varphi$ such that $\psi_v(G_v)=G_v$, then the restriction of $\psi_v$ to $G_v$ has \total{}, and its spectrum and palangre spectrum satisfy   properties 1 and 2 of the theorem (see Remarks~\ref{unit} and~\ref{rem: spectrum pA homeo}). The same is therefore true for $\varphi$ by  Theorems \ref{thm: combination without spectra} and \ref{res: combination growth}.
    
     The conclusion in the case where $G$ is hyperbolic follows from the fact that there are no abelian vertices in $T$ with stabilizer $\Z^k$ with $k\ge 2$, so all the growth in the vertex groups comes from surfaces $\Sigma$, as in \autoref{res: growth surface}. The complexity of $\Sigma$ is bounded by  \autoref{access}.
\end{proof}

\part{Infinitely-ended groups}\label{part: infinitely-ended}

This part is devoted to the proof of  \autoref{enplus} in the case where $G$ has infinitely many ends.

 We will   consider a decomposition $G=G_1*\dots*G_q*\F_N$ of an arbitrary  finitely generated group $G$
as a free product, with $\F_N$ free,  and an automorphism $\Phi\in\Out(G)$ fixing each conjugacy class $[G_j]$.
For $G$ toral realtively hyperbolic we will use the Grushko decomposition, with each  $G_j$ one-ended; any $\Phi$  then has a power fixing each $[G_j]$.

The main technical tool in the proof will be completely split train tracks (simply  abbreviated as CT's) in the sense of \cite{Feighn:2011tt} (for free groups) and \cite{Lyman:2022wz} (for general free products).

\section{Completely split train track maps (CT's)} \label{cts}

In this section we   explain what a CT is, and we review the properties that we will use. At the end of the section we explain why every automorphism of  an infinitely-ended  toral relatively hyperbolic group has a power which may be represented by a CT. 

We follow the   terminology introduced in \cite{Feighn:2011tt} and \cite{Lyman:2022wz}.
(For experts we mention  the following differences: we will drop the word ``almost'' in almost Nielsen paths and almost INP's, and the words ``maximal taken'' when considering connecting paths; we view fixed edges and non-growing exceptional paths as INP's.)

Starting with a decomposition $G=G_1*\dots*G_q*\F_N$ as above,
we view $G$ as the fundamental group of a finite \emph{graph of groups $\Gamma$}  with trivial edge groups.
For each $j\in\{1,\dots,q\}$, there is a vertex $v_j$ with vertex group $G_j$; the other vertex groups are trivial. The vertices $v_j$ will be called \emph{fat} vertices (this terminology is not in \cite{Lyman:2022wz}).  We sometimes view $\Gamma$ as a topological graph $\Gamma_{top}$, with   fundamental group   $\F_N$. The group $G$ acts on the Bass-Serre tree $T$ with trivial edge stabilizers (as usual we assume that the decomposition is minimal, i.e.\ there is no proper invariant subtree).

In this section, we consider oriented edges $e$.
The vertices $o(e)$ and $t(e)$  stand for the origin and   terminal point of   $e$ respectively. 
The opposite edge is denoted by $\bar e$.

A \emph{path} $\gamma$ 
in $\Gamma$ is a sequence $g_0e_1g_1\dots  e_pg_p$ where $e_1,\dots, e_p$ are  edges with $t(e_i)=o(e_{i+1})$ and $g_i$ is an element of the group carried by $t(e_i)$ for $i>0$ (with $g_0$ in the group carried by $o(e_1)$). We set $o(\gamma)=o(e_1)$ and $t(\gamma)=t(e_p)$.
The path is \emph{trivial} if $p=0$. We write $\gamma\m$ for the path $g_p\m\bar e_p\dots \bar e_1 g_0\m$.

A \emph{circuit} is a sequence
$g_0e_1g_1\dots  e_pg_p$ as above with the extra condition that $o(e_1)=t(e_p)$, up to cyclically permuting the indices and replacing $(g_0,g_p)$ by $(g_pg_0,1)$ or $(1,g_pg_0)$.
We will assume $g_0=1$ when convenient.
By abuse, we will often think of a circuit as a path 
whose origin and terminal point coincide.
 Since $G$ is the fundamental group of $\Gamma$, any circuit represents a conjugacy class in $G$.

A sequence $e_ig_ie_{i+1}$ is a \emph{turn} of $\gamma$ at $t(e_i)$. It is \emph{degenerate} if $g_i=1$ and $e_{i+1}=\bar e_i$. 
 A path is \emph{tight} if it contains no degenerate turn. Since a non-tight path may be tightened in the obvious way, we always assume that paths are tight. 
A circuit $e_1g_1\dots  e_pg_p$ is \emph{tight} if it is tight as a path and moreover $e_1\ne \bar e_p$ if $g_p=1$.

 If $\gamma,\gamma'$ are paths with $t(\gamma)=o(\gamma')$, we can consider  their (possibly non-tight) \emph{concatenation} $\gamma\gamma'
 =g_0e_1g_1\dots  e_p (g_pg'_0) e'_1g'_1\dots  e'_{p'}g'_{p'}$, replacing 
$g_p$ and $g'_0$ by their product in the relevant vertex group.

Paths (not circuits) will often be viewed up to \emph{equivalence}, with two paths equivalent if they differ only by the values of $g_0$ and $g_p$.

To define a  CT, one must specify:  

\begin{itemize}
 
 \item a graph of groups $\Gamma$ as above;
 
 \item for every vertex $v$, a vertex $f(v)$   with the additional requirement that $f(v)=v$ if $v$ is fat;
 \item for every edge $e$, a non-trivial path $f(e)$ joining $f(o(e))$ to $f(t(e))$;  
  it is required that the resulting global map   $f:\Gamma\to\Gamma$    induces  a homotopy equivalence $f_{top}$ of   the underlying topological graph $\Gamma_{top}$;
 \item for each $j\in\{1,\dots,q\}$, an automorphism $\phi_j$ of the vertex group $G_j$ carried by the fat vertex $v_j$.  
\end{itemize}

We usually denote a CT by  $f:\Gamma\to \Gamma$, with  the $\phi_j$'s implicit. 
A CT  must satisfy many properties (\cite{Feighn:2011tt} and \cite{Lyman:2022wz}), some of which we now review.

Given a CT, one can consider the \emph{tightened image $f_\sharp(\gamma)$} of any path $\gamma=g_0e_1g_1\dots  e_pg_p$: 
one replaces each $e_i$ by its image, each $g_i$ by $h_i = \phi_j(g_i)$ whenever $g_i$ belongs to some $G_j$ and by $h_i = 1$ otherwise;  one multiplies the last group element in $f(e_i)$ by $h_i$ and the initial element of $f(e_{i+1})$; and one tightens. One defines the tightened image of a circuit similarly.

This 
 yields a well-defined outer automorphism $\Phi$ of $G$, viewed as the fundamental group of the graph of groups $\Gamma$. 
Each conjugacy class $[G_j]$ is preserved;
thus, for every $j$, the automorphism $\Phi$ has a representative in $\aut G$ agreeing with $\varphi_j \in \aut{G_j}$ on $G_j$.
We say that the CT $f$ \emph{represents} $\Phi$,  and  we call $\varphi_j$ the \emph{$j^{\rm th}$-component} of $\Phi$ (and also of  its representatives $\varphi$).

A concatenation $\gamma=\gamma_1\gamma_2\dots\gamma_p$ is a \emph{splitting} of $\gamma$ if
\begin{equation*}
    f^k_\sharp(\gamma_1\gamma_2\dots\gamma_p)=f^k_\sharp(\gamma_1)f^k_\sharp(\gamma_2)\dots f^k_\sharp(\gamma_p), \quad \forall k \in \N,
\end{equation*}
i.e.\ $\gamma$ is tight and there is no cancellation of edges between the tightened images of $\gamma_i$ and $\gamma_{i+1}$ by powers of $f$.
 We then write $\gamma=\gamma_1\cdot\gamma_2 \cdot\ldots\cdot \gamma_p$.

There is a  filtration $\es=\Gamma_0\inc \Gamma_1\inc\dots\inc \Gamma_m=\Gamma_{top}$ by (possibly non-connected) $f_{top}$-invariant subgraphs. The \emph{$r^{\rm th}$ stratum} is the closure $H_r$ of $\Gamma_r\setminus \Gamma_{r-1}$. The \emph{height} of a path $\gamma$ is the smallest $r$ such that $\gamma\inc\Gamma_r$ (i.e.\ the edges of $\gamma$ belong to $\Gamma_r$). The invariance of $\Gamma_r$ implies that the height of $f\ti(\gamma)$ is at most the height of $\gamma$ (all edges of $f\ti(\gamma)$ are in  $\Gamma_r$ if $\gamma$ has height $r$).

There are three types of strata:

\begin{itemize}
\item   EG stratum: the transition matrix of $f_{top}$ on $H_r$ is irreducible with Perron-Frobenius eigenvalue $\lambda_r>1$;

 \item NEG stratum: $H_r$ consists of a single edge $e$, and (up to replacing $e$ by $\bar e$) $f(e)=ge\cdot u$ with $g $ in the vertex group carried by $o(e)$ and $u$ a (possibly trivial) path of height less than $r$ (note that $ge\cdot u$ is required to be a splitting);
 
 \item zero stratum:  $f_{top}(H_r) $ has height less than $r$.
 
 \end{itemize}
 
 We say that an edge is an EG edge, NEG edge, zero edge according to the type of the stratum that contains it. The fact that   $u$ (in the NEG case) and $f_{top}(H_r) $ (in the zero case) have  height strictly less than $r$ makes inductive arguments possible.  If an NEG edge $e$ is contained in a path $\gamma$, it receives a preferred orientation and its image may be either $ge\cdot u$ or $u\cdot eg$.
 
 A  non-trivial path $\gamma$ is a \emph{Nielsen path} (almost Nielsen path in \cite{Lyman:2022wz}) if $f_\sharp(\gamma)$ is equivalent to $\gamma$: they differ only by the values of $g_0$ and $g_p$ (we view Nielsen paths up to equivalence). 
 An \emph{indivisible Nielsen path} (INP) is a Nielsen path which cannot be split   into two Nielsen paths.
 Any Nielsen path is a concatenation of INP's.
 Unlike Feighn-Handel \cite{Feighn:2011tt} and Lyman \cite{Lyman:2022wz}, we consider  an edge $e$ in an NEG stratum with $f(e)=geg'$ (where $g$ and $g'$ are elements of the relevant vertex groups) as an INP. 
 
 An edge $e$ in an NEG stratum is \emph{linear} if $f(e)=ge\cdot u$ with $u$ a Nielsen path (if $u$ is trivial, we consider $e$ as an INP, not as a linear edge). 

A path $\gamma$ is an \emph{exceptional path} if $\gamma=gew^p\bar e'g'$ where:

\begin{itemize}
\item $g,g'$ are elements of the relevant vertex groups;
 \item $p\in\Z$  and $w$ is a Nielsen path with $f_\sharp(w)=w$ (equality, not just equivalence);
 \item $e,e'$ are linear edges with  $f(e)=he\cdot w^{d}$ and $f(e')=h'e'\cdot w^{d'}$ for some positive, distinct,  integers $d,d'$ and $h,h'\in G$.
\end{itemize}

  Unlike Feighn-Handel and Lyman,  we  require $d\ne d'$  (if $d=d'$, we  view $\gamma$ as an INP, not   an exceptional path).
Note that $w$ must be a circuit, and $f^k_\sharp(\gamma)$ is the exceptional path $g_kew^{p+k(d-d')}\bar e'g'_k$ for some group elements $g_k,g'_k$.

Given a zero stratum $H_r$, the theory distinguishes certain paths contained in $H_r$, called ``maximal taken connecting paths''. We  simply call them \emph{connecting paths}. Zero strata are contractible (because $f_{top}$ is a homotopy equivalence) and contain no fat vertex (see the property \emph{Zero Strata} in \cite{Feighn:2011tt} or \cite{Lyman:2022wz}).
Hence there are only finitely many connecting paths (not just up to equivalence).

A splitting $\gamma=\gamma_1\cdot\ldots\cdot\gamma_p$ is a \emph{complete splitting} if every $\gamma_i$ is one of the following:

\begin{itemize}
 \item an edge in an EG or NEG stratum (possibly with vertex group elements on either end);
 \item an INP;
 \item an exceptional path;
 \item a connecting path.
 \end{itemize}
 The subpaths $\gamma_i$ are the \emph{terms} of the complete splitting (also called splitting units).

We say that $\gamma$ is \emph{completely split}
if it  has a complete splitting. This  splitting is  unique up to  replacing the subpaths $\gamma_i$ by equivalent paths \cite[Lemma~6.3]{Lyman:2022wz}. The terms of the splitting are thus well-defined up to equivalence.   Most paths considered in the proof will be completely split, and we will only consider turns between terms (not turns between two edges belonging to the same term).

The key property of a CT is the following: \emph{if $\gamma$ is an edge in an EG or   NEG stratum, or a connecting path in a zero stratum, then $f_\sharp(\gamma)$ is completely split.} Since the tightened image of an INP/exceptional path is an INP/exceptional path, this implies: if $\gamma=\gamma_1\cdot\ldots\cdot\gamma_p$  is a complete splitting, then $f_\sharp(\gamma)$ has a complete splitting which refines the splitting $f _\sharp(\gamma_1)\cdot\ldots\cdot f _\sharp(\gamma_p)$, see \cite[Lemma~6.1]{Lyman:2022wz}.

If $e$ is an edge in an EG stratum $H_r$, the terms of the complete splitting of $f_\sharp(e)$ are edges of $H_r$ or have height at most $r-1$; the first and last terms are edges in $H_r$.

If $\gamma$ is any path, there exists $k$ such that $f^k_\sharp(\gamma)$ is completely split  (Lemma~6.12 of \cite{Lyman:2022wz}, Lemma 4.25 of \cite{Feighn:2011tt}). We will need this   fact for circuits (complete splittings of circuits are defined in the obvious way, and the proof is the same).

This completes our review of properties of CT's. We will be able to use them thanks to the following existence result, which we deduce from \cite{Lyman:2022wz}.

\paragraph{CT's exist for toral relatively hyperbolic groups.}

We now suppose that $G$ is  toral relatively hyperbolic, and $G=G_1*\dots*G_q*\F_N$ is a    Grushko decomposition, with each $G_j$ one-ended.

\begin{theo} \label{ct}
 Let $G$ be an infinitely-ended toral relatively hyperbolic group. There exists $M$ such that, for any $\Phi\in\Out(G)$, there is a  CT $f:\Gamma\to \Gamma$  representing $\Phi^M$.
\end{theo}

\begin{proof}
In  \cite{Feighn:2011tt} Feighn and Handel prove the following two statements:
 any rotationless $\Phi \in \out{\F_N}$ is represented by a CT, and   there exists $M$ (depending only on $N$) such that any $\Phi^M$ is rotationless. 

For automorphisms of free products,  the definition of rotationless in  \cite{Lyman:2022wz} involves a new condition, which does not appear for free groups. 
In that context, Lyman proves that the first statement holds, see  Theorem A in \cite{Lyman:2022wz}. 
However, the second statement is not known in general because of the new condition in the definition of rotationless. 
   We describe this condition for the convenience of the reader, and   we explain how to deal with it for toral relatively hyperbolic groups. 
 
Consider a relative train track map $f:\Gamma\to \Gamma$ representing $\Phi$. It induces a ``derivative'' map $f'_{top}$ on the set of directions in $\Gamma_{top}$
 (a direction is a germ of oriented edge at a vertex): the image of the germ of $e$ is   the germ of the initial edge of $f_{top}(e)$. 
Replacing $\Phi$ by a power, we may assume  that any germ which is periodic under the action of $f'_{top}$ is in fact fixed 
(in the terminology of \cite{Lyman:2022wz}, almost periodic directions are almost fixed). 

Let $T$ be the Bass-Serre tree of the graph of groups $\Gamma$. There is a bijection between lifts $\tilde f$ of $f$ to $T$ and representatives $\varphi\in\Aut(G)$ of the outer automorphism $\Phi$: the lift associated to $\varphi$ satisfies  $\tilde f (gx)=\varphi(g)\tilde f(x)$  for every $g\in G$ and $x\in T$ (see Section 1 of \cite{Lyman:2022wz}).

Let $\Lambda$ be the set of oriented edges $\tilde e$ in the Bass-Serre tree $T$ lifting  an oriented edge $e$ of $\Gamma$ such that $f'_{top}(e)=e$.   If $\tilde e\in\Lambda$, there is a unique lift  $\tilde f$ of $f$ (depending on $\tilde e$) such that the initial edge of $\tilde f(\tilde e)$ is $\tilde e$.  We denote  by $\tilde f'_{top}$ the derivative of $\tilde f$, and by $\varphi_{\tilde e}\in\Aut(G)$ the automorphism associated to $\tilde f$ by the formula $\tilde f (gx)=\varphi_{\tilde e}(g)\tilde f(x)$.

The new requirement for   rotationless in \cite{Lyman:2022wz} is that, for any edge $\tilde e$ in $\Lambda$ and any germ $d$ at the   origin  of $\tilde e$, if $d$ is periodic under the lift $\tilde f'_{top}$ associated to $\tilde e$, then it is fixed by $\tilde f'_{top}$. 

 In order to prove that $\Phi$ has a rotationless power, we use
 Proposition~5.7 of \cite{Lyman:2022wz}, which gives a sufficient condition on $\varphi_{\tilde e}$ for the requirement to be satisfied\footnote{This proposition may be proved by an argument used on page 32 of \cite{Levitt:2008go} since the element $h$ constructed in Lyman's proof satisfies (in their notation) $Dg(x,e)=(hx,e)$.}.
  
  Recalling that  $G$ is torsion-free, it follows from Proposition 5.7 of \cite{Lyman:2022wz} that $\Phi$ has a rotationless power provided that the following finiteness condition holds for every $\varphi_{\tilde e}$: there exists  a bound for the period of elements of $G$ which are periodic under iteration of $\varphi_{\tilde e}$. If this bound only depends on $G$, some fixed  power of $\Phi$ is rotationless. The   following result thus implies the theorem.  
  \end{proof}

   \begin{theo} \label{shor}
  Let $G$ be a toral relatively hyperbolic group. 
 There exists $M$ such that, if $g\in G$ is periodic under iteration of some $\varphi\in\Aut(G)$, then its period is at most $M$.
 \end{theo}

\begin{proof}
This is  proved  in \cite[Corollary~10.3]{Levitt:2008go} for $G$ hyperbolic. Our proof is similar, using arguments due to Shor \cite{shorScottConjectureHyperbolic}.

   Theorem~1.8 of \cite{Guirardel:2014bx} provides a bound for the period (but it depends   on $\varphi$). This allows us to consider  the periodic subgroup $P\inc G$ of $\varphi$: it consists of  all the elements which are periodic under $\varphi$, and $\varphi_{ | P}$ has finite order $k$. 
 
 There are two cases. If $P$ is abelian, we get a uniform bound because the rank of $P$ is bounded and every  $GL(n,\Z)$ is virtually torsion-free.

If $P$  is non-abelian, Theorem~8.2 of  \cite{Guirardel:2015fi} says that it  is contained in a $\varphi^k$-invariant vertex group $G_v$ of an abelian    splitting of $G$, and the class of $\varphi^k_{ | G_v}$ in $\Out(G_v)$ has finite order. In fact  $\varphi_{ | G_v}^k$ itself has finite order because its fixed subgroup is not abelian, and therefore $P=G_v$. 

By \cite{Guirardel:2016do} there are only finitely many isomorphism types of vertex groups in abelian splittings of $G$, and the result follows     since $\Out(G_v)$ is virtually torsion-free (Corollary~4.5 of \cite{Guirardel:2015fi}), so   $k$ is bounded.
\end{proof}

\section{Growth in free products}\label{gfp}

As above, let $G=G_1*\dots*G_q*\F_N$ be a decomposition of $G$ as a free product. 
Assume that $\varphi\in\Aut(G)$ sends each $G_j$   to a conjugate and may be represented by a CT.

The main result of this section is a general combination theorem, which allows us to conclude that conjugacy classes of $G$ have PolExp growth under   iteration of  $\phi$ if, for each $j$, total PolExp growth holds  for the $j^{\rm th}$-component $\varphi_j\in\Aut(G_j)$.
More precisely, we show the following.

\begin{theo}\label{infb0}
Let $G$ be a finitely generated group, with a decomposition $G=G_1*\dots*G_q*\F_N$. 
Assume that $\varphi\in\Aut(G)$ sends each $G_j$ to a conjugate 
and is represented by a CT $f$.
If the $j^{\rm th}$-component $\phi_j\in\Aut(G_j)$ of $\phi$ has \total{} for all $j\in\{1,\dots,q\}$, then:
\begin{enumerate}
\item
For every $g\in G$, there exist $\lambda\geq 1$ and $d\in\N$ such that $\norm{\varphi^n(g)}\asymp n^d\lambda^n$.
\item
Moreover, $\lambda$ is an eigenvalue of the transition matrix of $f$  or appears in the spectrum or palangre spectrum of some $\phi_j$. 
The number $d$ is bounded by the sum of the number of strata of the CT and the maximal degree appearing in the spectra and palangre spectra of the $\phi_j$'s (see \autoref {spec}). 
\end{enumerate}
\end{theo}
 
In the case of toral relatively hyperbolic groups we get:

\begin{coro} \label{infb}
Let    
$G$ be a toral relatively hyperbolic group, with Grushko decomposition $G=G_1*\dots*G_q*\F_N$, and let $\varphi\in\Aut(G)$. 
\begin{enumerate}
    \item \label{enu: infb - growth}
    For every $g\in G$, there exist $\lambda\geq 1$ and $d\in\N$ such that $\norm{\varphi^n(g)}\asymp n^d\lambda^n$.
    \item \label{enu: infb - spectrum}
     There exist a non-negative integral matrix $A$, an integer $K$, and, for each $j$, an  automorphism $\psi_j\in\Aut(G_j)$ such that, for each $g$, 
      the number $\lambda^K$ 
      is an eigenvalue of $A$ or appears in the spectrum or palangre spectrum
     of $\psi_j$ for some $j\in\{1,\dots,q\}$.
     The degree $d$ is bounded independently of $g$. 
\end{enumerate}
In particular, the spectrum of $\phi$ is finite.
\end{coro}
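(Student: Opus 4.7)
The plan is to bootstrap from the one-ended case (\autoref{res: recap one-ended groups}) via the combination theorem \autoref{infb0}, after replacing $\varphi$ by a well-chosen power so that the hypotheses of \autoref{infb0} are met. The step that requires a bit of care is the translation of the conclusions of \autoref{infb0}, which apply to the power, back to $\varphi$ itself, together with tracking how the spectrum and the bound on $d$ transform under this operation.

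First I would invoke \autoref{ct} to obtain $M \geq 1$, depending only on $G$, such that $\varphi^M$ is represented by a CT $f\colon\Gamma\to\Gamma$, where $\Gamma$ is a graph of groups whose fat vertices carry precisely the factors $G_1,\dots,G_q$ of the Grushko decomposition. By the very definition of a CT, $f$ fixes each fat vertex and thus induces, for each $j$, an automorphism $\psi_j\in\Aut(G_j)$; moreover there is a representative of $\varphi^M$ in $\Aut(G)$ whose restriction to $G_j$ agrees with $\psi_j$. Since each $G_j$ is one-ended toral relatively hyperbolic, \autoref{res: recap one-ended groups} gives that every $\psi_j$ has \total, and that the spectrum $\Lambda_{\psi_j}$ and palangre spectrum $\Lambda_{\mathrm{pal},\psi_j}$ are finite, with degrees and cardinalities bounded in terms of $G_j$ (hence in terms of $G$) only.

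The hypotheses of \autoref{infb0} are therefore satisfied for $\varphi^M$ with this CT. Applied to any $g\in G$, the theorem yields $d\in\N$ and $\mu\geq 1$ with
\begin{equation*}
    \norm{(\varphi^M)^n(g)} \asymp n^d \mu^n,
\end{equation*}
where $\mu$ is either an eigenvalue of the transition matrix $A$ of $f$ or a member of $\Lambda_{\psi_j}\cup\Lambda_{\mathrm{pal},\psi_j}$ for some $j$, and where $d$ is bounded by the number of strata of $f$ plus the maximal degree appearing in these spectra. Now \autoref{puiss} converts this into the growth statement for $\varphi$ itself: setting $\lambda=\mu^{1/M}$, we obtain
\begin{equation*}
    \norm{\varphi^n(g)} \asymp n^d \lambda^n,
\end{equation*}
which proves assertion~(\ref{enu: infb - growth}), and furthermore $\lambda^M=\mu$ is either an eigenvalue of $A$ or lies in $\Lambda_{\psi_j}\cup\Lambda_{\mathrm{pal},\psi_j}$ for some $j$. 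Taking $K=M$ gives the first half of assertion~(\ref{enu: infb - spectrum}).

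Finally, for the uniformity in $g$: the integer $d$ is bounded above by the number of strata of $f$ (a function of the underlying graph $\Gamma$ only, whose combinatorics depend on $\varphi$) plus the maximal degree occurring in the spectra of the $\psi_j$'s, and the latter is bounded by \autoref{res: recap one-ended groups} in terms of $G$ alone. The set of possible values of $\lambda$ is contained in the finite set of $M$-th roots of eigenvalues of $A$ together with $M$-th roots of elements of the finitely many sets $\Lambda_{\psi_j}$ and $\Lambda_{\mathrm{pal},\psi_j}$; hence only finitely many pairs $(d,\lambda)$ arise, so $\Lambda_\varphi$ is finite as claimed.
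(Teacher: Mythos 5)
Your proof is correct and follows essentially the same route as the paper: invoke \autoref{ct} to get $M$ (depending only on $G$) with $\varphi^M$ represented by a CT, verify the hypotheses of \autoref{infb0} via \autoref{res: recap one-ended groups}, apply \autoref{infb0} to $\varphi^M$, and use \autoref{puiss} to transfer the conclusion (with $K=M$) back to $\varphi$.
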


We note that \autoref{infb} completes the proof of \autoref{main}.
Indeed the spectrum of $\varphi$ comes from the matrix $A$ and the freely indecomposable factors $G_j$, 
and the latter are controlled by \autoref{res: recap one-ended groups}.

\begin{rema}[Uniform bounds]\label{unifo} 
Uniform bounds (depending only on $G$) on the spectrum as in 
(\ref{enu: enplus - unbout}) of \autoref{main} would follow from a bound on the number of EG and NEG strata  in CT's (and of edges in EG strata to bound the algebraic degree of $\lambda$). Such bounds do not seem to exist in the literature at the time of writing, even for $G=\F_N$ (in this case bounds for $d$ and $ | \Lambda | $ are given in \cite{Levitt:2009hx}, using \Rt s). 

Nevertheless, when $G$ is a torsion-free hyperbolic group, the fact that $d$ and $|\Lambda|$ are uniformly bounded (in terms of $G$ only) can be deduced from our work combined with \cite[Proposition~A.11]{Fio}. 
More precisely, given any $\varphi\in\Aut(G)$, we can find a $\varphi$-invariant chain $\{1\}=\mathcal{F}_0\sqsubset\mathcal{F}_1\sqsubset\dots\sqsubset\mathcal{F}_k=\{G\}$ of properly nested free factor systems of $G$, such that the following holds: for every $i\in\{1,\dots,k\}$, and every $\varphi$-invariant free factor $F$ of $G$ whose conjugacy class belongs to $\mathcal{F}_i$, the restriction $\varphi_{|F}$ is fully irreducible relative to $\mathcal{F}_{i-1}$. 
There is a uniform bound on $k$, and in fact on the length of any chain of free factor systems. For every $i\in\{1,\dots,k\}$, the following holds:
\begin{itemize}
\item If $(\mathcal{F}_{i-1})_{|F}$ is a sporadic free factor system of $F$, then there is a $\varphi$-invariant free splitting of $F$ relative to $\mathcal{F}_{i-1}$. In this case the growth rates of $\varphi_{|F}$ are controlled from those in $\mathcal{F}_{i-1}$ by our \autoref{res: combination growth}, see \autoref{rem: metric decomposition free splitting}.  
\item If $(\mathcal{F}_{i-1})_{|F}$ is a non-sporadic free factor system of $F$, then the growth rates for $\varphi_{|F}$ are controlled by \cite[Proposition~A.11(3)]{Fio}. In this case $\varphi_{|F}$ has exponential growth, see for instance \cite[Lemma~2.9]{Andrew:2023ho}.
The docility assumption in \cite[Proposition~A.11]{Fio} is therefore satisfied provided we know that the maximal growth rate for elements under iteration of all automorphisms in the same outer class as $\varphi$ is the same. This is given by our \autoref{res: word growth fixed conj class} below.
\end{itemize}

The same argument might also work for toral relatively hyperbolic groups, however this would require extending the results in \autoref{words} to that setting.
\end{rema}

\begin{rema}\label{vtf}
\autoref{main} may be extended to groups which are virtually torsion-free. In this case there exist a $\varphi$-invariant, torsion-free, finite index subgroup $G_0$ and $k\geq1$ such that $g^k \in G_0$ for every $g \in G$. PolExp growth then holds in $G$ if it does in $G_0$, provided that infinite cyclic subgroups of $G$ are uniformly undistorted: there exists $C$ depending only on $G$ such that  \begin{equation*}
         \frac 1k \norm {g^k} \leq \norm g \leq C \norm{g^k} + C
    \end{equation*}
    for all $g\in G$.
This is true for $G$   hyperbolic (see for instance \cite[Chapitre~10, Proposition~6.4]{Coornaert:1990tj}),
 and quite probably for $G$ virtually toral relatively hyperbolic, but we were not able to find a reference in the literature.
\end{rema}

\medskip
The remainder of \autoref{gfp} is devoted to the proof of Theorems~\ref{infb0} and \ref{infb}.
We fix $\varphi$ and a CT $f$ as in \autoref{infb0}.
We will allow ourselves to replace $\varphi$ by a power when needed. This is legitimate by \autoref{puiss}.

\subsection{Length}
 
In order to compute word length, we fix any finite generating set for $G$.

\begin{defi}[Length of a circuit,  a path, a turn] \label{lgt}
The length of a circuit 
 $\gamma=e_1g_1\dots  e_pg_p$ in $\Gamma$ is $ | \gamma | =p+\sum_{i=1}^p | g_i | $: we count the number of edges and the length of the elements $g_i$ (which  belong to some $G_j$ if $t(e_i)$ is fat,  and are trivial otherwise).
 
 The length of a path
  $\gamma=g_0e_1g_1\dots  e_pg_p$  is $ | \gamma | =p+\sum_{i=1}^{p-1}| g_i | $. Note that we do not take $g_0$ and $g_p$ into account, so that equivalent paths have the same length.
  
The length of a   turn $\tau=e_ig_ie_{i+1}$ of $\gamma$, with $g_i$ in some $G_j$,  is $ | \tau | = | g_i | $ (if $t(e_i)=o(e_{i+1})$ is a non-fat vertex,  then $g_i$ is trivial and the length is 0). 
\end{defi}

\begin{rema}\label{long}
  The length of a circuit is defined so that the length of any (tight) circuit $\gamma$ is equivalent to the length $\norm g$ of the conjugacy class that it represents.
\end{rema}

 Now consider a completely split path or circuit $\gamma$. Any two consecutive terms $\mu, \mu'$ of its complete splitting   determine a turn $\tau$, which we   call a \emph {turn of $\gamma $} with \emph{adjacent terms} $\mu,\mu'$.   We will not consider turns at points of $\gamma $ which are not splitting points. A turn of $\gamma $ at a vertex $v$ is called a \emph{fat turn} if $v$ is fat. 

\begin{rema}\label{pasbord}
Length is defined so that the length of  $\gamma $ is the sum of the lengths of the terms of its complete splitting and the lengths of its (fat) turns. 
\end{rema}

 Applying $f$ and tightening maps a turn $\tau$ of $\gamma $ to a turn of $f\ti(\gamma)$, which we denote $f\ti(\tau)$. A fat turn is mapped to a fat turn at the same vertex.  
Replacing $\phi$ (hence $f$) by  a power, we may   assume that the image by $f_{top}^r$ of any given (non-fat) vertex $v$ is independent of $r\geq 1$. This implies that there are only two possibilities for a given non-fat turn $\tau$ of $\gamma$: either $f\ti(\tau)$ is fat, or no $f\ti^r(\tau)$ is fat.

To prove \autoref{infb0}, we must compute the growth of $\norm{\varphi^n(g)}$.    
Since the components of $\varphi$ have PolExp growth, we may assume that $g$ is not conjugate into one of the  subgroups $G_j$. The class $[g]$  is then represented by a unique non-trivial tight circuit $\gamma$ in $\Gamma$. 

By \autoref{long},  the growth of $\norm{\varphi^n(g)}$ is equivalent to that of the length of $f_\sharp^n(\gamma)$, computed as in \autoref{lgt}, so it suffices to show 
that, for any circuit $\gamma$, the length $ | f_\sharp^n(\gamma) | $ grows like some $n^d\lambda^n$.
Since some $f\ti^k(\gamma)$ is completely split, we may assume that $\gamma$ itself is completely split. Each image $\gamma_n=f\ti^n(\gamma)$ is completely split, and its complete splitting   refines the image of that of $\gamma_{n-1}$.

To prove \autoref{infb0}, we shall associate a growth type $(d,\lambda )$   to fat  turns and completely split paths (or circuits) in \autoref{agt}, and then prove in \autoref{cg} that this growth type   captures the growth of the images by $f\ti^n$: they grow like  $n^d\lambda^n$.

\subsection{The growth   of a turn}

We first compute the growth of $| f\ti^n(\tau) | $, for $\tau$ a fat turn of a completely split path or circuit -- recall that we only consider turns between terms of the complete splitting.

\begin{lemm}\label{gtour}
 Let $\delta$ be a completely split path.   If $\tau=e_0g_0e'_0$ is a turn of $\delta$ at a fat vertex $v_j$, then $| f\ti^n(\tau) | $ grows like $n^d \lambda^n$  for some $(d,\lambda)$ in the palangre spectrum of $\phi_j$.
\end{lemm}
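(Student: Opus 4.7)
The plan is to show that the group element at the descendant of $\tau$ in $f^n_\sharp(\delta)$ takes the form of a ``palangre sandwich'' $L_n(\phi_j,\xi)\phi_j^n(c)R_n(\phi_j,\eta)$ for some constants $\xi,\eta,c\in G_j$, which by \autoref{rema:bete} and the definition of the palangre spectrum forces its length to grow like $n^d\lambda^n$ with $(d,\lambda)\in\Lambda_{\mathrm{pal},j}$. By \autoref{pasbord}, $|f^n_\sharp(\tau)|$ equals the length of this group element, so this suffices.

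Let $\mu,\mu'$ be the terms of $\delta$ adjacent to $\tau$, so that $e_0$ (resp.\ $e'_0$) is the last (resp.\ first) edge of $\mu$ (resp.\ $\mu'$). The splitting property guarantees that in the complete splitting of $f^n_\sharp(\delta)$, the blocks $f^n_\sharp(\mu)$ and $f^n_\sharp(\mu')$ appear consecutively, separated by a turn at $v_j$ with well-defined group element $g_n\in G_j$; we normalize so that $f^n_\sharp(\mu)$ ends and $f^n_\sharp(\mu')$ starts with no extra vertex-group element at $v_j$. Applying $f_\sharp$ to this local neighborhood, the splitting property yields $f^{n+1}_\sharp(\mu)\,\phi_j(g_n)\,f^{n+1}_\sharp(\mu')$; the fresh trailing element $\xi_n \in G_j$ produced at the end of $f^{n+1}_\sharp(\mu)$ and the leading element $\eta_n \in G_j$ produced at the start of $f^{n+1}_\sharp(\mu')$ absorb into the turn, giving the affine recursion
\begin{equation*}
    g_{n+1}=\xi_n\,\phi_j(g_n)\,\eta_n.
\end{equation*}

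The key technical step is to prove that, after replacing $\varphi$ by a suitable power (harmless for growth-type computations by \autoref{puiss} and \autoref{rem: spectrum-power}), the sequences $\xi_n,\eta_n$ are eventually constant, equal to some $\xi,\eta\in G_j$. I would verify this case-by-case on the type of the last term of $\mu$ (and dually, the first term of $\mu'$). For an NEG edge $e_0$ whose preferred orientation points away from $v_j$, one has $f(e_0)=\bar u\cdot e_0 h^{-1}$ with $h\in G_j$, so the last edge is always $e_0$ and the trailing element is always $h^{-1}$. For an edge in an EG stratum, the ``take-last-edge'' operation is a self-map of the finite edge set of the stratum, hence becomes the identity after passage to a power, and the fresh trailing vertex-group element then stabilizes. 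An INP $\nu$ satisfies $f_\sharp(\nu)=\nu$ up to equivalence and trivially yields a fixed trailing element. For an exceptional path $gew^p\bar e'g'$, the identity $f(\bar e')=\bar w^{d'}\cdot\bar e'(h')^{-1}$ shows that the last edge $\bar e'$ is stable and the trailing element is $(h')^{-1}$, independent of the drifting exponent $p$. Finally, a connecting path in a zero stratum, and an NEG edge whose preferred orientation points toward $v_j$, drop into lower strata after one application of $f$, at which point the preceding cases apply by descending induction on the filtration.

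With $\xi,\eta$ fixed, for $n$ large enough the recursion solves explicitly as $g_n=L_n(\phi_j,\xi)\,\phi_j^n(c)\,R_n(\phi_j,\eta)$, for some constant $c\in G_j$ absorbing the initial transient. Using \autoref{rema:bete}, we rewrite $\phi_j^n(c)R_n(\phi_j,\eta)=R_n(\phi_j,\phi_j(c)\eta c^{-1})\,c$, hence $|g_n|$ is within $|c|$ of $|L_n(\phi_j,\xi)\,R_n(\phi_j,\phi_j(c)\eta c^{-1})|$. By the very definition of $\Lambda_{\mathrm{pal},j}$, this double palangre grows like $n^d\lambda^n$ for some $(d,\lambda)\in\Lambda_{\mathrm{pal},j}$, completing the proof. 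I expect the main obstacle to be the case-by-case stabilization, especially for EG edges (where controlling the vertex-group element appearing immediately after the last edge of $f(e)$ requires unpacking the precise structure of the image) and for exceptional paths (whose exponent drifts linearly yet whose end-data stays fixed).
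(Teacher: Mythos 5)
Your proof takes essentially the same route as the paper: set up the affine recursion $g_{n+1}=\xi_n\phi_j(g_n)\eta_n$ for the turn's group element, show by a case analysis on the type of the adjacent term that $\xi_n,\eta_n$ are eventually periodic (constant after passing to a power), solve the recursion as a double palangre, and invoke \autoref{rema:bete} together with \total{} of $\phi_j$. The case breakdown (INP, the two NEG orientations, exceptional paths with a drifting exponent but fixed boundary data, connecting paths handled by descent on height, and EG edges via eventual periodicity of the ``last-edge'' self-map) is exactly the paper's; the only point you gloss over is that after replacing $f$ by $f^{k_0}$ one must still recover growth along all $n$, not just multiples of $k_0$, which the paper handles by noting that $\xi_n,\eta_n$ take finitely many values so the growth of $g_{i+k_0n}$ is independent of the residue $i$.
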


\begin{proof}
The image of $\tau$ by $f\ti^k$ is a fat turn $f^k\ti(\tau)=e_kg_ke'_k$ of $f^k\ti(\delta)$ at the same vertex $v_j$. 
Let $\theta_k$ and $\theta'_k$ be the terms 
of the complete splitting of $f^k\ti(\delta)$ adjacent to $f\ti^k(\tau)$, normalized so that the group elements at the ends of $\theta_k$ and  $\theta'_k$ are trivial (recall that $\theta_k,\theta'_k$ are only defined up to equivalence).

For $k\geq 0$,  the element $g_{k+1}\in G_j$ is   equal to   $ m_k\phi_j(g_k)m'_k$, where $\phi_j \in \Aut(G_j)$ and $m_k,m'_k$ are  elements of $G_j$ which depend only on  
$\theta_k$ and $\theta'_k$. 
We claim that there is $k_0$, depending only on $f$, such that, for  $k\geq k_0$, the sequences $m_k$ and $m'_k$ are periodic with period at most $k_0$. 

To prove the claim, it suffices to  
 consider $m_k$. We distinguish several cases, depending on the  nature of the term $\theta_0$ with last edge    $e_0$.

If $\theta_0$ is an INP,  or an NEG edge $e_0$  such that $f(e_0)=u\cdot e_0m_0$ with $u$ a path of lower height,
we can take $k_0=1$ since $\theta_k$ is independent of $k$. We can also take $k_0=1$ if 
$\theta_0$  
is  exceptional, i.e.\ of the form $\theta_0=ew^pe_0$, as $\theta_k$ differs from $\theta_0$ only by the exponent of $w$.
If $\theta_0$ is a connecting path or an NEG edge $e_0$ with $f(e_0)=ge_0\cdot u$, we use induction on height since $\theta_1$ has lower height than $\theta_0$. 

Finally, if $\theta_0=e_0$ is an EG edge in a stratum $H_r$,  then the last edge of $f(e_0)$  belongs to  $H_r$, and $k_0$ depends on the permutation of the set of oriented edges of $H_r$ taking $e$ to the last edge of $f(e)$. This proves the claim. 

First assume $k_0=1$: we have $m_k=m_1$ and $m'_k=m'_1$ for $k\geq 1$.  
Given a fat turn $\tau =e_0g_0e'_0$ as above, we  can now compute the  growth  of  $ | g_n | = | f\ti^n(\tau) | $.  
We have
\begin{align*}
g_{n+1} & =m_1\phi_j(m_{1})\dots\textbf{} \phi_j^{n-1}(m_1)\phi_j^n(g_1) \phi_j^{n-1}(m'_1)\dots \phi_j(m'_{1})m'_1\\
& =L_n(\varphi_j,m_1) \varphi^n_j(g_1)R_n(\varphi_j,m'_1).
\end{align*}
\autoref{rema:bete}  and  
\total{} of $\varphi_j$  
say that $| f\ti^n(\tau) | $ grows (in $G_j$, hence in $G$ by quasiconvexity of $G_j$) like some $n^d\lambda^n$ coming from the palangre spectrum of $\phi_j$.

If $k_0>1$, we apply the previous argument to $f^{k_0}$. As in   \autoref{puiss}, the lemma  is true for $f$ because it is true for $f^{k_0}$; indeed $g_{k+1}= m_k\phi_j(g_k)m'_k$ with $m_k,m'_k$ taking only finitely many values, so (up to equivalence)  the growth of $g_{i+k_0n}$ does not depend on the residue of $i$ mod $k_0$.
\end{proof}

\subsection{Assigning  growth types  }\label{agt}

If $\tau$ is a fat turn, we have just seen that $| f\ti^n(\tau) | $ grows like some $n^d\lambda^n$. We define the growth type $c(\tau)$ of $\tau$ as $(d,\lambda)$. Note that $c(f\ti(\tau))=c(\tau)$.
We shall now assign a growth type $c(\delta)$ to any completely split path $\delta$; it will depend only on the equivalence class of $\delta$. In \autoref{cg} we will prove that $c(\delta)$ captures the growth of $f\ti^n(\delta)$.

We first need to understand  how   fat turns $\tau$ between terms of the paths $f^k\ti(\delta)$   appear. 
Some of them are just the image of 
a fat turn of $f^{k-1}\ti(\delta)$. The others  appear in two ways. 

First, they may be    in the interior of the image of an edge or a connecting path contained in the complete splitting of $f^{k-1}\ti(\delta)$ (INP's and exceptional paths do not create turns, as their image is a single term). The other possibility is that $\tau$ is  the  image of a turn of $f^{k-1}\ti(\delta)$ at a non-fat vertex $v$. 
This $v$ cannot be the image of a splitting point $w$ of $f^{k-2}\ti(\delta)$ because we have assumed $f_{top}^2(w)=f_{top}(w)$ for any vertex $w$, so $v$ lies in the interior of the image of an edge or a connecting path. 
Thus fat turns are created only in the image of an edge or connecting path by $f\ti $ or $f^2\ti $.

Since $\Gamma$ only contains finitely many edges and connecting paths,  
and $ c(f\ti(\tau))=c(\tau)$, we deduce from the previous discussion that, given $\delta$,  only finitely many growth types $c(\tau)$ are associated to fat turns of the paths $f^k\ti(\delta)$. 

The definition of $c(\delta)$ is by induction on height.  
Recall that the length of a path $g_0e_1g_1\dots  e_pg_p$ does not take $g_0$ and $g_p$ into account (see \autoref{pasbord}). We first treat the case where $\delta$ is reduced to a single term.  
INP's do not grow,  
and exceptional  paths grow linearly,  so we define $c(\delta)$ as $(0,1)$ or $(1,1)$ respectively.  
The growth type of a connecting path is defined as that of its tightened image (which has lower height).
Now suppose that $\delta$ is an edge in an EG or NEG stratum $H_r$. 

We associate to $H_r$ a finite set $C$ of growth types as follows.
Consider all edges $e$ of $H_r$, and all terms of lower height in the complete splitting of $f(e)$. These lower terms have growth types by induction, and we include them in $C$. We also consider the fat turns of  the paths $f\ti(e)$, as well as those of $f^2\ti(e)$ (created as images of non-fat turns of $f\ti(e)$ as explained above), and we   include their growth types in $C$. 

We let $(d_C,\lambda_C)$  be the maximal growth type in $C$ (for the obvious order, see \autoref{gtype}). We compare it with $(0,\lambda_r)$, 
with $\lambda_r$ the Perron-Frobenius eigenvalue associated to the stratum $H_r$ (it is larger than $1$ if and only if $H_r$ is EG). 
For $e$ any edge in $H_r$, we define $c(e)$ as the maximum of these two growth types, with one exception: if $\lambda_r=\lambda_C$, we define   $c(e)=(d_C+1,\lambda_C)$.
Note that $c(e)$ only depends on the stratum $H_r$ containing $e$.

This definition is motivated by 
the following standard fact. 

\begin{lemm} \label{est} 
  For $\lambda_1,\lambda_2\geq 1$ and  $d\geq 0$ an integer,
  \begin{equation*}
    \sum _{k=1}^n (n-k)^d\lambda_1^{k} \lambda_2^{n-k} \asymp 
    \left\{
        \begin{split}
            \lambda_1^n,\quad  \text{if}\ \lambda_1>\lambda_2 \\
            n^d\lambda_2^n,\quad  \text{if}\ \lambda_2>\lambda_1 \\
            n^{d+1}\lambda_2^n,\quad  \text{if}\ \lambda_2=\lambda_1. 
        \end{split}
    \right.
  \end{equation*}
  \qed
\end{lemm}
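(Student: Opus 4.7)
The plan is to handle each of the three cases by factoring the appropriate dominant exponential out of the sum and estimating a reduced sum. In each case the upper bound will come from a single global estimate and the lower bound from isolating a suitable single term (or a block of terms).

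First I would treat the case $\lambda_1 > \lambda_2$. Factoring $\lambda_1^n$ out of the sum and substituting $j=n-k$ transforms it into $\lambda_1^n \sum_{j=0}^{n-1} j^d (\lambda_2/\lambda_1)^j$. Since the ratio $\lambda_2/\lambda_1$ lies in $[0,1)$, the series $\sum_{j\geq 0} j^d (\lambda_2/\lambda_1)^j$ converges, which gives the upper bound $\sum \preccurlyeq \lambda_1^n$. For the lower bound it is enough to isolate the term $k = n$ when $d=0$, or the term $k = n-1$ (value $\lambda_1^{n-1}\lambda_2$) when $d\geq 1$.

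Next, for $\lambda_2 > \lambda_1$, I would factor $\lambda_2^n$ out to reach $\lambda_2^n \sum_{k=1}^n (n-k)^d (\lambda_1/\lambda_2)^k$. The upper bound comes from the crude estimate $(n-k)^d \leq n^d$ combined with the fact that the geometric series in $(\lambda_1/\lambda_2)^k$ is summable. For the lower bound, the single term $k=1$ already contributes $(n-1)^d \lambda_1 \lambda_2^{n-1}$, which is of the desired order $n^d \lambda_2^n$.

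The remaining case $\lambda_1 = \lambda_2 = \lambda$ is the easiest: the sum collapses to $\lambda^n \sum_{j=0}^{n-1} j^d$, and the classical asymptotic $\sum_{j=0}^{n-1} j^d \asymp n^{d+1}$ gives the result directly. No real obstacle arises; the only mild care required is in the lower bounds, to make sure one isolates a term whose size matches the claimed order (which is why in the first case the choice of ``representative'' term depends on whether $d=0$ or $d\geq 1$), and in checking that the additive constant in the definition of $\asymp$ (see \autoref{gtype}) is absorbed — which is automatic since all quantities are nonnegative and the bounds hold for all $n\geq 1$.
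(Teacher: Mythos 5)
Your proof is correct. The paper in fact provides no proof of this lemma at all — it is introduced with the phrase ``the following standard fact'' and the \texttt{\textbackslash qed} appears immediately after the displayed formula — so there is nothing to compare against; but your argument (factor out the dominant exponential, use convergence of $\sum j^d x^j$ for $0\leq x<1$ or the classical estimate $\sum_{j<n} j^d \asymp n^{d+1}$ for the upper bound, and isolate a single term of the right size for the lower bound) is precisely the natural elementary argument one would expect, and the care you take with the boundary case $d\geq 1$, $n=1$ — where the isolated term degenerates to $0^d=0$ and the additive constant in $\asymp$ is genuinely needed — is exactly the right point to be careful about.
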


We have now defined $c(\delta)$ for $\delta$ a term. If $\delta$ is a completely split path or circuit, 
 let $\delta=\delta_0\cdot\ldots \cdot\delta_p$ be its complete splitting. To motivate the definition of $c(\delta)$, note that   
 \begin{equation*}
     |f^k\ti(\delta) | =\sum_{i=0}^p| f  ^k\ti(\delta_i) | +\sum_{i=0}^{p-1} | f\ti^k(\tau_i) | ,
 \end{equation*}
with $\tau_i$ the turn between $\delta_i$ and $\delta_{i+1}$  (we include the turn between $\delta_p$ and $\delta_0$ if $\delta$ is a circuit). 
Recall that, for each $i \in \{0, \dots, p-1\}$, either $\tau_i$ is fat, or $\tau_i$ is not fat but $f\ti(\tau_i)$ is, or no $f\ti^k(\tau_i)$ is fat. 
This leads us to define $c(\delta)$ as the maximal growth type among those of the subpaths $\delta_i$, those of    the fat turns of $\delta$, and those of  the  fat turns of $f\ti(\delta)$ which are images of non-fat turns of $\delta$.

\begin{rema}\label{rema: growth-types}
Note that every $\lambda$ featured in growth types appears in the growth of a palangre involved in the growth of a fat turn (\autoref {gtour}), or is the eigenvalue $\lambda_r$  associated to an EG stratum.
\end{rema}

\subsection{Computing growth}\label{cg}

If $\tau$ is a fat turn, we have seen that $ | f^n\ti(\tau) | $ grows like some $n^d\lambda^n$ (\autoref {gtour}), and we have defined $c(\tau)=(d,\lambda)$. We now show:

\begin{lemm}\label{gr}
Let $\delta$ be   a completely split path or circuit, with $c(\delta)=(d,\lambda)$. Then 
 the length $ | f^n\ti(\delta) | $ grows like $n^d\lambda^n$.
\end{lemm}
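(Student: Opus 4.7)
The plan is to proceed by induction on height, handling first a single term of a complete splitting, and then deducing the general case.

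The first step is the reduction to single terms. Since complete splittings are preserved by $f_\sharp$, for $\delta = \delta_0 \cdot \delta_1 \cdot \ldots \cdot \delta_p$ with turns $\tau_i$ between consecutive terms, \autoref{pasbord} yields
\[
|f^n_\sharp(\delta)| \;=\; \sum_i |f^n_\sharp(\delta_i)| \;+\; \sum_i |f^n_\sharp(\tau_i)|.
\]
Assuming the term case, $|f^n_\sharp(\delta_i)| \asymp n^{d_i}\lambda_i^n$ where $(d_i, \lambda_i) = c(\delta_i)$. Each fat turn contributes according to \autoref{gtour}; a non-fat turn either contributes $0$ throughout or, once $f_\sharp(\tau_i)$ becomes fat, grows like $c(f_\sharp(\tau_i))$. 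These growth types are exactly those combined in the definition of $c(\delta)$, giving $|f^n_\sharp(\delta)| \asymp n^d\lambda^n$ with $(d,\lambda)=c(\delta)$.

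Next, I would handle single terms by induction on height. INPs have constant length (being an invariant of the equivalence class, which $f_\sharp$ preserves); exceptional paths $g e w^p \bar e' g'$ iterate to $g_n e w^{p+n(d-d')} \bar{e}' g_n'$, giving linear length growth matching $c = (1,1)$; connecting paths $\delta$ have $f_\sharp(\delta)$ of lower height, covered by induction. For an NEG edge $e$ with $f(e) = ge \cdot u$, one checks inductively that
\[
f^n_\sharp(e) \;=\; g_n \, e \cdot u \cdot f_\sharp(u) \cdot \ldots \cdot f^{n-1}_\sharp(u)
\]
is a complete splitting, so $|f^n_\sharp(e)| = 1 + \sum_{k=0}^{n-1} |f^k_\sharp(u)| + (\text{turn contributions})$. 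By the inductive hypothesis $|f^k_\sharp(u)| \asymp k^{d_u}\lambda_u^k$, and summing via \autoref{est} (with $\lambda_1=1=\lambda_r$) gives the growth prescribed by $c(e)$, with the turn terms handled analogously using \autoref{gtour}.

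The main obstacle will be the EG case. For $\delta = e$ in an EG stratum $H_r$ with Perron--Frobenius eigenvalue $\lambda_r > 1$, the terms of the complete splitting of $f^n_\sharp(e)$ are either edges in $H_r$ or of lower height. Perron--Frobenius theory gives that the number of $H_r$-edges in $f^n_\sharp(e)$ grows like $\lambda_r^n$. Each lower-height term in $f^n_\sharp(e)$ arises at some iteration step $k \in \{1, \ldots, n\}$ as a sub-term of the complete splitting of $f(\epsilon)$ for some $H_r$-edge $\epsilon$ in $f^{k-1}_\sharp(e)$, and is subsequently iterated by $f^{n-k}_\sharp$; the count of such terms generated at step $k$ is proportional to $\lambda_r^{k-1}$, while each grows like $(n-k)^{d_\nu}\lambda_\nu^{n-k}$ with $(d_\nu, \lambda_\nu) \leq (d_C, \lambda_C)$ by induction. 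The same bookkeeping applies to fat turns. Consequently
\[
|f^n_\sharp(e)| \;\asymp\; \lambda_r^n \;+\; \sum_{k=1}^{n} \lambda_r^{k-1} (n-k)^{d_C} \lambda_C^{n-k},
\]
and \autoref{est} distinguishes the three cases $\lambda_r > \lambda_C$, $\lambda_r < \lambda_C$, and $\lambda_r = \lambda_C$, yielding respectively $(0, \lambda_r)$, $(d_C, \lambda_C)$, and $(d_C + 1, \lambda_C)$ --- exactly the three cases in the definition of $c(e)$. The delicate point is the $+1$ in the exponent of $n$ arising precisely when $\lambda_r = \lambda_C$, which is the content of \autoref{est}; the main technical burden is the careful combinatorial tracking of how lower-height terms and fat turns are created at each step and then iterated further.
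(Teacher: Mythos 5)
Your proposal follows the same overall route as the paper: induction on height, reduction to terms of the complete splitting via \autoref{pasbord}, and for an edge $e$ in an EG stratum $H_r$, a bookkeeping argument counting lower-height "ancestor" subpaths and fat turns created at each iterate together with their "descendants," closed off by \autoref{est}. The treatment of INPs, exceptional paths, connecting paths, and the NEG recursion is also the same in spirit (the paper folds NEG into the EG argument with $\lambda_r = 1$ rather than treating it separately, but that is cosmetic).

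However, there is one genuine gap: the displayed estimate
\[
|f^n_\sharp(e)| \;\asymp\; \lambda_r^n + \sum_{k=1}^{n} \lambda_r^{k-1}(n-k)^{d_C}\lambda_C^{n-k}
\]
is asserted as a two-sided asymptotic, but the reasoning that precedes it only supports the upper bound. You bound each lower-height contribution by $(n-k)^{d_\nu}\lambda_\nu^{n-k}$ with $(d_\nu,\lambda_\nu)\leq(d_C,\lambda_C)$, which gives $\preccurlyeq$. For the lower bound --- and this is precisely where the delicate case $\lambda_r=\lambda_C$ lives, since that is when the sum, not the $\lambda_r^n$ term, dominates and the extra power of $n$ appears --- you must exhibit a \emph{specific} turn or term $\mu$ of lower height with $c(\mu)=(d_C,\lambda_C)$ and argue that it recurs with multiplicity $\sim\lambda_r^k$ inside $f^k_\sharp(e)$. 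The paper does this by picking an edge $e'$ of $H_r$ such that $\mu$ appears in the complete splitting of $f(e')$ or $f^2_\sharp(e')$, and then invoking Perron--Frobenius to count copies of $e'$ in $f^k_\sharp(e)$. Without this step the lower bound, and hence the claimed $\asymp$, is not established when $\lambda_r=\lambda_C$. A smaller point: you should include in $C$ the fat turns of $f^2_\sharp(e')$ that arise as images of non-fat turns of $f(e')$, not only the fat turns of $f(e')$ itself; otherwise some ancestors escape the set $C$ and the upper-bound bookkeeping is incomplete.
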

\begin{proof}

The proof is by induction on height.  
Note that we have defined $c$
in the preceding section
in such a way that  the lemma is true for 
  any $\delta$ if it is true for its terms, so we consider terms. The only non-trivial case is when $\delta$ is an edge $e$ in an EG or NEG stratum $H_r$. Recall that the image of any edge of height $r$ has a complete splitting whose terms have height at most $r-1$ or are edges of height $r$.

We first show that $ | f^n\ti(e) |\succcurlyeq n^d\lambda^n $ if $c(e)=(d,\lambda)$.
Noting that  any edge $e'$ in $H_r$ appears as a term in some $f^k\ti(e)$, the result is clear from the definition of $c(e)$, except if $\lambda_r=\lambda_C$ because then $c(e)=(d_C+1,\lambda_C)$. 

We thus assume $\lambda_r=\lambda_C$. 
There is an edge $e'$ of $H_r$ such that the complete splitting of $f(e')$ or $f^2\ti(e')$ contains 
a turn or a term of height smaller than $r$, say $\mu$, with growth type $c(\mu)=(d_C,\lambda_C)$.

Each $ f^k\ti(e) $ contains $\lambda_r^k$ copies of $e'$, hence at least $\lambda_r^k$ copies of $\mu$  
(here and below we neglect multiplicative constants). For $n>k$, the image in $  f^n\ti(e)  $ of each copy has length $(n-k)^{d_C}\lambda_C^{n-k}$ (this uses the induction hypothesis if $\mu$ is a term). 
Thus $ | f^n\ti(e) | $ is at least $\sum _{k=1}^n(n-k)^{d_C}\lambda_C^{n-k}\lambda_r^k$, which grows like $n^{d_C+1}\lambda_C^{n}$ when $\lambda_r=\lambda_C$ by \autoref{est}.

 We now show the upper bound: $ | f^n\ti(e) |\preccurlyeq n^d\lambda^n $. We must make sure that all growth types contributing to the growth of $ | f^n\ti(e) | $ are accounted for in the definition of $c(e)$.

 For $k\geq 1$, we define a \emph{$k$-ancestor}  $\rho$ as  a subpath of $f^{k}\ti(e)$ which is 
  a maximal subpath of height less than $r$ in $f(e')$, for $e'$ an edge of height $r$ in the complete splitting of $f^{k-1}\ti(e)$. Ancestors have bounded length, and up to multiplicative constants the number of $k$-ancestors is $\lambda_r^k$ (unless all edges in the paths $f^{k}\ti(e)$ have height $r$, a trivial case). 
 
 We claim that the growth type $c(\rho)$ of any $k$-ancestor $\rho$ belongs to the set $C$ used to define $c(e)$. Indeed, $c(\rho)$ was defined using the growth type of the terms of its complete splitting, of its fat turns, and of the fat turns of $f\ti(\rho)$. All of these appear in $C$ (recall that  growth types of fat turns of $f^2\ti(e')$ are in $C$).

 Using the induction hypothesis,   we deduce that, if $\rho$ is  a $k$-ancestor and $n>k$, then $f^{n-k}\ti(\rho)$  is a subpath  of length at most $(n-k)^{d_C}\lambda_C^{n-k}$ of $f^n\ti(e)$, which we call a descendant of $\rho$.

 The path $f^n\ti(e)$, whose length we want to bound, has a splitting    into edges of height $r$  and descendants $f^{n-k}\ti(\rho)$, with $1\leq k\leq n$. We call  it the coarse splitting of $f^n\ti(e)$ because it is coarser than the complete splitting.
 
 To bound the length of  $f^n\ti(e)$, we estimate separately the total length of the terms of the coarse  splitting and the total length of the fat turns between these terms.

The total length of the descendants contained in $f^n\ti(e)$ is bounded by $ \sum_{k=1}^n \lambda_r^k (n-k)^{d_C}\lambda_C^{n-k}$, and there are $\lambda_r^n$ edges of height $r$.  Both numbers are bounded by $n^d\lambda^n$ by \autoref{est} and the definition of $c(e)$.

The  argument to bound the total length of all fat turns between terms of the coarse splitting of $f^n\ti(e)$ is similar. 
We now define a   $k$-ancestor 
as a fat turn $\tau$ between two terms of the complete splitting of $f(e')$,  
for $e'$ an edge of height $r$ in the complete splitting of $f^{k-1}\ti(e)$, or a fat turn of $f^{2}\ti(e')$ which is the image of a non-fat turn of $f(e')$, for $e'$ of height $r$ in $f^{k-2}\ti(e)$. 

We claim that any fat turn $\tau$ between terms of the coarse splitting of $  f\ti^n(e)  $ has an ancestor (i.e.\ there exists a $k$-ancestor  $\tilde\tau$ such that  $\tau=f\ti^{n-k}(\tilde\tau) $). Indeed, the vertex $v$ carrying $\tau$ belongs to the image of an edge $e'$ of height $r$ in $f\ti^{n-1}(e)$. The turn $\tau$ is an $n$-ancestor if $v$ is the image of an interior point of $e'$, or is the image of a non-fat endpoint of $e'$ (which must be  in   the interior of $f(e'')$ for some edge $e''$ of height $r$ in $f\ti^{n-2}(e)$). Otherwise  $v$ is the image of a fat endpoint of $e'$ and we use induction on $n$.
This proves the claim.

It is again true that the number of $k$-ancestors is $\lambda_r^k$, and we know that the growth type $c(\tau)$ of a fat turn $\tau$ computes  $ | f\ti^n(\tau) | $.
We conclude by checking that
  $C$ was defined so as to  contain all growth types of ancestors.
\end{proof} 

\subsection {End of the proof of PolExp growth}

 We can now prove   \autoref{infb0} and 
\autoref{infb}, which imply  \autoref{main} as pointed out before.

The arguments of the previous sections 
   prove  
  \autoref{infb0} for some power   $\phi^p$ (we had to replace $f$ by a power to ensure that the image by $f_{top}^r$ of any   vertex   is independent of $r\geq 1$).
  
  Indeed, given $g$, PolExp growth follows from the assumptions if a conjugate of $g$ is contained in some $G_j$. Otherwise, we can represent  some $[\varphi^k(g)]$ by a completely split circuit $\gamma$, and   \autoref{gr} says that $ | f^n\ti(\gamma) | $ grows like some $n^d\lambda^n$.  
  The second assertion of \autoref{infb0} follows from \autoref{rema: growth-types}, with the bound on $d$ coming from the way growth types were defined in \autoref{agt}.

   By \autoref{puiss}, the theorem is true also for $\phi$ itself because the incidence matrix of $f^p$ is $A^p$, and $(d,\lambda)$ appears in the palangre spectrum of $\varphi_j$ if $(d,\lambda^p)$ does in that of $\varphi_j^p$. 
  
To prove \autoref{infb} we recall that,
    if $G$ is toral relatively hyperbolic, Theorems \ref{res: recap one-ended groups} and \ref{ct} ensure that \autoref{infb0} applies to a power of $\varphi$. 
 We conclude using \autoref{puiss} as before.

\part{Further results}

 In this part we assume that $G$ is a torsion-free hyperbolic group.

\section{Growth of elements}\label{words}

 We now consider growth of elements rather than conjugacy classes.
 Recall (\autoref{sga} and \autoref{main}) that every conjugacy class grows like some $n^d\lambda^n$. The set of growth types   of conjugacy classes that occur for a given $\varphi\in\Aut(G)$ is the spectrum of $\varphi$, denoted by $\Lambda$. 
 It is finite.

\begin{prop}
\label{res: word growth fixed conj class}
 Let $G$ be a torsion-free  hyperbolic group.
    Let $\phi \in \aut G$.

\begin{enumerate}
 \item

 For every element $g \in G$, the length $\abs{\phi^n(g)}$ grows like some $n^d\lambda^n$, with $d\in\N$ and $\lambda\geq 1$.
    \item  Let $(d_M, \lambda_M) =\max\Lambda$ be the maximal growth type of conjugacy classes under   iteration  of $\phi$.
    The growth type $(d,\lambda)$ of any $g\in G$ is bounded above by $(d_M, \lambda_M)$ if $\lambda_M > 1$, by $(d_M+1, 1)$ if $\lambda_M = 1$.
\item More generally, the  growth type $(d,\lambda)$ of any  $g\in G$   belongs to $\Lambda^+ = \Lambda \cup \set{ (d+1, 1)}{(d,1) \in \Lambda}$. 
 
    \end{enumerate}
\end{prop}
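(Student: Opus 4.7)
The plan is to apply the trick from \autoref{elts}: let $G' = G * \langle s \rangle$ and let $\varphi' \in \Aut(G')$ extend $\varphi$ while fixing $s$. For a nontrivial $g \in G$, the element $(\varphi')^n(sg) = s \varphi^n(g)$ is represented by a cyclically reduced word of syllable length $2$ in the free product $G'$, hence its conjugacy length in $G'$ is comparable to its word length:
\[
    |\varphi^n(g)|_G \asymp \norm{(\varphi')^n(sg)}_{G'}.
\]
Since $G'$ is torsion-free hyperbolic, \autoref{enplus} applied to $\varphi'$ gives $\norm{(\varphi')^n(sg)}_{G'} \asymp n^d \mu^n$ for some $d \in \N$ and $\mu \geq 1$, which proves claim~(1).

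For claims~(2) and~(3) I need to control the growth type $(d,\mu)$ in terms of $\Lambda = \Lambda_\varphi$. I would apply the spectrum-refined combination theorem \autoref{infb} to $\varphi'$ acting on the Grushko decomposition of $G'$, which agrees with the Grushko decomposition of $G$ up to one additional free factor $\langle s \rangle$. Any CT representing a power of $\varphi'$ can be chosen to fix the edge $s$, so its transition matrix acquires only the extra eigenvalue $1$ (and no new stratum beyond those present for a CT of $\varphi$ on $G$). Writing $G_1,\dots,G_q$ for the one-ended free factors of $G$ and $\varphi_j \in \Aut(G_j)$ for the induced component, \autoref{infb} yields
\[
    (d, \mu) \in \{(0,1),(1,1)\} \cup \Bigl( \bigcup_{j=1}^{q} \bigl( \Lambda_{\varphi_j} \cup \Lambda_{\mathrm{pal}, \varphi_j} \bigr) \Bigr)^+.
\]

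It remains to absorb the right-hand side into $\Lambda^+$. The inclusion $\bigcup_j \Lambda_{\varphi_j} \subset \Lambda$ follows from \autoref{infb} itself. For the palangre side, I would show that at each vertex $v$ of the refined JSJ tree of a one-ended hyperbolic factor $G_j$, the spectra $\Lambda_v$ and $\Lambda_{\mathrm{pal}, v}$ coincide: at R-vertices both reduce to $\{(0,1),(1,1)\}$ (\autoref{rv}), and at pA-vertices both equal $\{(0,1),(0,\lambda_v)\}$ (or its bordered-surface analogue) by \autoref{res: growth surface} and the free-group train-track argument of \autoref{sg}. Combining these equalities with the combination inclusions of \autoref{res: combination growth} gives $\Lambda_{\mathrm{pal}, \varphi_j} \subset \Lambda_{\varphi_j}^+ \subset \Lambda^+$, and the same holds after propagation through the Grushko decomposition. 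Hence $(d,\mu) \in \Lambda^+$, which is~(3). Claim~(2) follows immediately since $\max \Lambda^+$ equals $(d_M,\lambda_M)$ when $\lambda_M>1$ and $(d_M+1,1)$ when $\lambda_M=1$.

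The main obstacle is the last step: one has to keep track carefully of the ``$+$'' operations introduced at each application of the combination theorem, in order to check that no polynomial factor beyond what $\Lambda^+$ allows is created. In particular, verifying $\Lambda_v = \Lambda_{\mathrm{pal}, v}$ at pA-vertices (so that the $+$ appears only once in the final inclusion) is what prevents the ``quadratic growth'' exception of Remark~\ref{elts} from deteriorating into cubic or higher degrees.
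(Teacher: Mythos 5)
The proof of assertion (1) is correct and coincides with the paper's: apply \autoref{elts} to $G'=G*\grp s$ and invoke \autoref{enplus} for $G'$.

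For assertions (2) and (3), your approach diverges from the paper's and has two genuine gaps. First, the claimed output of \autoref{infb} is stronger than what that theorem says. The degree bound in \autoref{infb0}/\autoref{infb} is ``the number $d$ is bounded by the sum of the number of strata of the CT and the maximal degree appearing in the spectra and palangre spectra of the $\phi_j$'s.'' That is a crude additive bound which can compound one extra polynomial degree per EG/NEG stratum (see the definition of $c(e)$ in \autoref{agt}, where each stratum with $\lambda_r=\lambda_C$ contributes a $+1$). It does not yield $(d,\mu)\in\bigl(\bigcup_j(\Lambda_{\varphi_j}\cup\Lambda_{\mathrm{pal},\varphi_j})\bigr)^+$, which is a single ``$+$''. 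Without a sharper combination estimate you cannot rule out, say, cubic growth of elements from the free-product combination alone.

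Second, the equality $\Lambda_v=\Lambda_{\mathrm{pal},v}$ you invoke at vertex groups is false. At an R-vertex the restricted automorphism is inner, so conjugacy classes are constant and $\Lambda_v=\{(0,1)\}$, whereas palangres of the form $g^nh^n$ can grow linearly, so $\Lambda_{\mathrm{pal},v}=\{(0,1),(1,1)\}$ (\autoref{rv}). Likewise, at a pA-vertex carried by a surface with boundary, $\Lambda_v=\{(0,1),(0,\lambda)\}$ but the palangre spectrum may contain $(1,1)$ (\autoref{sg}). So the ``$+$'' does enter at the vertex level and you must track it.

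The paper handles (2) not by combining spectra, but via \autoref{pareil}: a direct hyperbolic-geometry argument showing that if $\varphi$ has infinite order in $\Out(G)$ and fixes a non-trivial element, then elements cannot grow faster than conjugacy classes. One then writes $\varphi=\mathrm{ad}_a\circ\psi$ with $\psi$ fixing a conjugacy class and uses Formula~(\ref{res: changing representation vs}) together with \autoref{est}; the remaining cases (no periodic class, so $\varphi$ is a pA homeomorphism, or a free product of surface groups) are handled directly. Assertion (3) is then obtained by iterating inside the quasiconvex ``slow'' subgroup $G_{sl}$ of \autoref{slqc}. Your proof would need an ingredient playing the role of \autoref{pareil} to tame the polynomial degree; the combination theorems alone do not suffice.
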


\begin{exam}\label{bgex}
  To illustrate 2,  consider 
the automorphism $\varphi:a\mapsto bab^{-1}, b\mapsto b^2ab^{-1}$ of $\F_2$ (representing a Dehn twist on  a punctured torus).   As noticed by Bridson-Groves \cite{Bridson:2010qu}, conjugacy classes grow at most linearly under $\varphi$, but the word  $b$ grows quadratically. Similar examples may be constructed in one-ended groups.
 We will see (\autoref{bg}) that, for any $\varphi$ having  elements growing faster than all conjugacy classes, all non-trivial $g\in G$ have the same growth type, unless some power of $\varphi$ is inner. 
 \end{exam}

 The proposition will be proved in this section, but the proof of the third assertion  uses  a quasiconvexity result (included in \autoref{slqc}) which will be proved in \autoref{hier}. Of course, this last assertion will not be used in \autoref{hier}.

 As mentioned in \autoref{elts},  PolExp growth of conjugacy classes implies that of elements, so the first assertion holds.
There are only finitely many growth types $(d,\lambda)$ as $g$ varies.

The proof of the  second assertion relies on the following lemma, which generalizes Lemma 2.3 of \cite{Levitt:2009hx} and will be used also to prove  malnormality in \autoref{theo: prelim filtration}.

\begin{lemm}\label{pareil}
Let $G$ be a torsion-free hyperbolic group.   
 If $\varphi\in\Aut(G)$ has infinite order in $\Out(G)$  and fixes some non-trivial $h\in G$, then elements cannot grow faster than conjugacy classes:
 $\abs{\phi^n(g)}\preccurlyeq n^{d_M} \lambda_M^n$ for every $g\in G$. 
 
\end{lemm}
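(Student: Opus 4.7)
Since $G$ is torsion-free hyperbolic and $h\neq1$, the element $h$ is loxodromic and its centralizer is infinite cyclic: $Z(h)=\langle h_0\rangle$ with $h=h_0^\ell$ for some $\ell\neq 0$. Because $\varphi(h)=h$, the automorphism $\varphi$ preserves $Z(h)$; since $\Aut(Z(h))=\{\pm\Id\}$, after replacing $\varphi$ by $\varphi^2$ (a substitution which preserves growth types of elements and conjugacy classes by \autoref{puissok}, in particular preserves $(d_M,\lambda_M)$), we may assume $\varphi(h_0)=h_0$, so that $\varphi^n$ fixes $Z(h)$ pointwise for every $n\in\N$.

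The key identity is: for every $k\in\Z$,
\begin{equation*}
\varphi^n(gh_0^k) = \varphi^n(g)\,h_0^k.
\end{equation*}
By definition of $(d_M,\lambda_M)$ this yields, for each fixed $k$, a constant $C_k$ such that
\begin{equation*}
\|\varphi^n(g)\,h_0^k\|\ \leq\ C_k\, n^{d_M}\lambda_M^n,\qquad\forall n\geq 1.
\end{equation*}
The plan is to combine the bounds for $k=0$ and $k=1$ (or a few small $k$'s) with the quasiconvex geometry of $\langle h_0\rangle\subset G$ to produce a controlled decomposition $\varphi^n(g)=h_0^{k_n}v_n$.

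More precisely, I would invoke the following hyperbolic-geometric facts. First, $\langle h_0\rangle$ is quasi-convex and undistorted in $G$, with quasi-axis $\alpha$ a quasi-geodesic on which $\langle h_0\rangle$ acts by translation of stable length $\tau(h_0)>0$. Second, in a hyperbolic group, if both $\|x\|$ and $\|xh_0\|$ are bounded by some $B$, then the shortest conjugators of $x$ and $xh_0$ must, up to a bounded additive error in $Z(h_0)$, be close in the Cayley graph: this forces $x$ to lie within bounded distance of some translate $h_0^{k}\cdot y$ with $|y|\leq CB$. Applied to $x=\varphi^n(g)$, this yields a decomposition $\varphi^n(g)=h_0^{k_n}v_n$ with $|v_n|\leq C\,n^{d_M}\lambda^n_M$. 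To bound $|k_n|$, apply the standard estimate $\|h_0^{k_n}v_n\|\geq |k_n|\tau(h_0) - C'(|v_n|+1)$ (valid for hyperbolic $h_0$ in hyperbolic $G$) to $\|\varphi^n(g)\|=\|h_0^{k_n}v_n\|\leq C\,n^{d_M}\lambda_M^n$; this gives $|k_n|\leq C''\,n^{d_M}\lambda_M^n$. Then $|\varphi^n(g)|\leq |k_n|\,|h_0|+|v_n|\leq C\,n^{d_M}\lambda_M^n$, which is the desired conclusion.

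The main obstacle is the geometric step producing the decomposition $\varphi^n(g)=h_0^{k_n}v_n$ with $v_n$ short: one has to rule out pathologies of the type $\varphi^n(g)=h_0^{-m_n}wh_0^{m_n}$ with $|w|$ short but $|m_n|$ unbounded, for which $\|\varphi^n(g)\|$ and $\|\varphi^n(g)h_0^k\|$ would all stay bounded even though $|\varphi^n(g)|$ is large. This is precisely where the hypothesis that $\varphi$ has infinite order in $\Out(G)$ is used: if such pathological conjugates appeared for all $n$, one would extract (e.g.\ via a standard limiting argument on the QI $\varphi^n$ restricted to the axis of $h_0$) that $\varphi$ acts on $G$ as $\iota_{h_0}^m$ modulo a bounded error, forcing $\varphi$ to be a power of an inner automorphism in $\Out(G)$ and contradicting the infinite-order hypothesis. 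Handling this ``straightening'' rigorously, presumably via the quasi-convexity of $\langle h_0\rangle$ and a calibration of $\varphi^n$ along the axis of $h_0$, is the delicate point; once it is available the algebraic conclusion is immediate.
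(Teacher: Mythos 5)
Your plan correctly identifies the pathology that must be ruled out, namely $\varphi^n(g)=h_0^{-m_n}w_nh_0^{m_n}$ with $\abs{w_n}$ controlled but $\abs{m_n}$ large, and correctly observes that without ruling it out the decomposition $\varphi^n(g)=h_0^{k_n}v_n$ with $\abs{v_n}$ short is simply false. But the route you sketch to rule it out does not work. Knowing that, for a single $g$, each $\varphi^n(g)$ is conjugate to a short element by a long power of $h_0$ is a statement about one orbit; it places no constraint on $\varphi^n$ applied to other group elements, so it cannot force $\varphi$ to agree globally with a power of $\iota_{h_0}$, and hence cannot force $\varphi$ to be torsion in $\Out(G)$. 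Moreover, the weaker statement you propose to extract in the limit --- that $\varphi$ agrees with $\iota_{h_0}^m$ up to bounded error \emph{along the quasi-axis of $h_0$} --- is automatically true for some $m$ (since $\varphi$ fixes $h_0$ it moves points of that quasi-axis a bounded amount), so it carries no information that could contradict the infinite-order hypothesis. This ``straightening'' step is therefore a genuine gap, and the proposed way of filling it is incorrect.

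The paper's proof is more direct and sidesteps the decomposition entirely. From the standard estimate $\dist ex\lesssim\norm x+2\gro{x\m}{x}{}$, applied to $\phi^n(g)$ and to $h\phi^n(g)$, together with the hyperbolicity inequality for Gromov products, one gets
\begin{equation*}
\abs{\phi^n(g)}\ \lesssim\ \max\bigl(\norm{\phi^n(g)},\norm{\phi^n(hg)}\bigr)+2\,\gro{\phi^n(g)}{h\phi^n(g)}{},
\end{equation*}
and the first term is $\preccurlyeq n^{d_M}\lambda_M^n$ by definition of $(d_M,\lambda_M)$. For the second term one shows $\gro{\phi^n(g)}{h\phi^n(g)}{}$ grows at most linearly: taking $\norm h$ large, this Gromov product is bounded above by $\abs{p_n}$ up to an additive constant, where $p_n$ is a projection of $\phi^n(g)$ onto a quasi-axis $A_h$ of $h$; and $\dist{p_n}{p_{n+1}}$ is bounded because (i) $\phi$ fixes $h$, so moves points of $A_h$ a bounded amount, and (ii) $\phi$ is a quasi-isometry fixing $e$ and the endpoints $h^{\pm\infty}$, hence sends the quasi-center $p_n$ of the ideal triangle $(e,\phi^n(g),h^{\pm\infty})$ close to $p_{n+1}$. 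The infinite-order hypothesis enters in guaranteeing $(d_M,\lambda_M)\geq(1,1)$, so the linear error can be absorbed. This projection bookkeeping along $A_h$ is exactly the quantitative control your sketch is missing.
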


\begin{proof}

We sketch the argument, following the proof of Lemma 2.3(2) in \cite{Levitt:2009hx}.

     We identify  $G$ with the set of vertices of a  Cayley graph, a $\delta$-hyperbolic space (rather than a tree in \cite{Levitt:2009hx}). We denote by $\sf d$ the distance function,  by $\gro x y {}$ the Gromov product based at the identity vertex $e$. 
         In this proof we write $A_n\lesssim B_n$ to  mean that $A_n- B_n $ has an upper bound independent on $n$ (the bounds will depend only on $\delta$ and  $\abs h$).
     
        Fixing $g\in G$, we first write 
        $$\dist e{\phi^n(g)}\lesssim \norm {\phi^n(g)}+2\gro { \phi^n(g\m)}{\phi^n(g)}{},$$ using a standard formula in hyperbolic spaces. Now fix $h\in G$ with $\varphi(h)=h$ and $\norm h$ large compared to $\delta$.
         We have 
        \begin{align*} 
        \dist e{\phi^n(g)} &=  \dist h{h\phi^n(g)} \\ 
         &\lesssim \dist e{h\phi^n(g)} \\ 
         &\lesssim \norm {h\phi^n(g)}+2\gro { \phi^n(g\m)h\m}{h\phi^n(g)}{} \\
         &\lesssim \norm { \phi^n(hg)}+2\gro { \phi^n(g\m) }{h\phi^n(g)}{}.
        \end{align*}

  Using the hyperbolicity formula $\min(   \gro x y {},\gro yz {})\leq \gro x z {}+\delta$, we conclude 
   $$
\abs{\phi^n(g)}=\dist e{\phi^n(g)}\lesssim\max(\norm { \phi^n(g)},\norm { \phi^n(hg)})+2 \gro { \phi^n(g)}{h\phi^n(g)}{} .
   $$  
   To prove the lemma, it now suffices to show that $ \gro { \phi^n(g)}{h\phi^n(g)}{}$ grows at most linearly. We assume  that it does not, and we argue towards a contradiction.

Let $A_h$ be an axis for $h$: an $h$-invariant quasigeodesic joining $h^{-\infty}$ to $h^{+\infty}$, with $h^{\pm\infty}=\lim_{n\to\pm\infty}h^n$. Let $p_n$ be a projection of $\phi^n(g)$ onto $A_h$, as in \autoref{periph}. If 
$ \gro { \phi^n(g)}{h\phi^n(g)}{}$ grows faster than linearly, so does $\abs{p_n}$:
indeed, since $\norm h$ has been chosen large compared to $\delta$, the distance from $p_n$ to any geodesic between $\phi^n(g)$ and $h\phi^n(g)$ is bounded independently of $n$. 

Since $\phi$  fixes $h$, points of $A_h$ are moved a bounded amount by $\phi$, so $d(p_n, \phi(p_n))$ is bounded.
We get a contradiction because $d( \phi(p_n), p_{n+1})$ is also bounded: indeed, $p_n$ may be viewed as a quasi-center of a triangle with vertices $e, \phi^n(g)$, and $h^{\pm\infty}$ (depending on whether $p_n$ goes to $h^{+\infty}$ or $h^{-\infty}$); fixing $e$ and $h^{\pm\infty}$, and being a  quasisometry, $\phi$ sends $p_n$ close to $p_{n+1}$.
\end{proof}

\begin{proof}[Proof of the second assertion of \autoref{res: word growth fixed conj class}]

We will use repeatedly  \autoref{est} in the following form:
$\sum _{k=1}^n k^d \lambda^{k}  $ grows like $n^d\lambda^{n}$ if $\lambda >1$, like $n^{d+1}$ if $\lambda =1$.

\autoref{pareil} implies the proposition when
 there is a non-trivial $\varphi$-fixed conjugacy class: write $\varphi= {\rm ad}_a\psi$ 
 with ${\rm ad}_a(x)=axa^{-1}$   and $\psi(h)=h$, and combine the formula 
 \begin{equation}
 \label{res: changing representation vs}
     \varphi^n(g) =  a\psi(a)\psi^2(a)\dots \psi^{n-1}(a) \psi^n(g)   \psi^{n-1}(a\m)\dots \psi^2(a\m)\psi(a\m)a\m
 \end{equation}
 with \autoref{est} to bound $\abs{\phi^n(g)}$.   By \autoref{puiss}, this applies also when there is a periodic conjugacy class.
  
If $G$ is one-ended and there is no non-trivial $\varphi$-periodic conjugacy class, the refined JSJ decomposition of \autoref{jsjdec} must be trivial, and $\varphi$ must be induced by a pseudo-Anosov homeomorphism of a closed surface.  In this case the theorem follows from \autoref{rema:bete} and \autoref{res: growth surface}.

The only remaining case is when $G$ has a  Grushko decomposition    $G= G_1 \ast \dots \ast G_q \ast \mathbf F_N$, where each $G_j$ is a surface group as above. We may assume that $\varphi$ is represented by a CT $f$ as in \autoref{cts}. 
By \autoref{res: growth surface}, palangres in $G_j$ grow at most like $\lambda_j^n$, with $\lambda_j$ the dilation factor of the associated pseudo-Anosov homeomorphism.
Hence, according to \autoref{gtour},  
each non-trivial turn at a fat vertex $v_j$ grows at most like $\lambda_j^n$ as well.

By the analysis of \autoref{gfp}, the maximal growth rate $(d_M, \lambda_M)$  of both conjugacy classes and terms of complete splittings under iteration of $\phi$ and $f$ is the supremum of the following: the growth rates of edges in EG strata of $f$, and the $\lambda_j^n$'s. We show that elements do not grow faster.

 Identifying $G$ with the fundamental group of the graph of groups $\Gamma$ and choosing a suitable $f$-fixed basepoint, $f$ induces an automorphism $\psi\in\Aut(G)$ in the same outer class as $\phi$. The growth of elements under $\psi$ is that of closed paths (which may be assumed to be completely split) under $f$, and is not faster than 
 $(d_M,\lambda_M)$. The same holds for $\phi$, using the same formula (\ref{res: changing representation vs}) as above.
\end{proof}
 
\begin{proof}[Proof of the third assertion of \autoref{res: word growth fixed conj class}]
 We know that there are only finitely many growth types of elements. Let $(d_M^{el}, \lambda_M^{el})$ be the largest one. One clearly   has $(d_M^{el}, \lambda_M^{el})\geq (d_M,\lambda_M)$, and the second assertion implies  $(d_M^{el}, \lambda_M^{el})\in\Lambda^+$.
 
 Let $G_{sl}$ be the $\varphi$-invariant subgroup of $G$  consisting of all ``slow'' elements: those whose growth type is less than $(d_M^{el}, \lambda_M^{el})$. As will be proved in \autoref{slqc},  it is quasiconvex, hence hyperbolic. In particular, for $g\in G_{sl}$, the growth of  $|\varphi^n(g)|$ and $\norm{\varphi^n(g)}$     may be computed indifferently in $G_{sl}$ or in $G$ (see \autoref{rem: growth in qc sbgp}), and the (conjugacy class) spectrum of the restriction $\varphi_{ | G_{sl}}$ is contained in $\Lambda$.
 
 We can now apply the same argument as above to $\varphi_{ | G_{sl}}$, and iterate. This process terminates because there are  fewer growth types of elements for $\varphi_{ | G_{sl}}$ than for $\varphi$, thus controlling all growth types of elements and completing the proof. 
\end{proof}

\section{A growth hierarchy} \label{hier}

In this section we combine  \autoref{main} with a construction due to Paulin \cite{Paulin:1997ba} to generalize the polynomial subgroups introduced in \cite{Levitt:2009hx, Dahmani:2023re} (see \autoref{raff}). We note that the arguments below rely on the existence and finiteness of growth types, and therefore cannot be used to give an alternative proof of our main theorem.
 
\begin{theo}\label{theo: prelim filtration}
Let $G$ be a torsion-free hyperbolic group, and let $\varphi\in\Aut(G)$  have infinite order in  $\Out(G)$.
Let $(d_M,\lambda_M)$ be the maximal growth type of conjugacy classes under iteration of $\phi$.
There exists a unique $\phi$-invariant set $\{[H_1],\dots, [H_k]\}$ of conjugacy classes of proper (possibly trivial) quasiconvex  subgroups $H_i$ of $G$ such that, for every $g\in G$:
\begin{enumerate}
    \item if $g$ is not conjugate into any of the subgroups $H_i$, then $\norm{\varphi^n(g)}\asymp  n^{d_M} \lambda_M^n$;
    \item if $g$ is conjugate into one of the subgroups $H_i$, then $\norm{\varphi^n(g)}$ grows strictly slower than $n^{d_M}\lambda_M^n$;
    \item  $H_i$ is not   contained in a conjugate of $H_j$ if $i\ne j$.
\end{enumerate}
Moreover, if  the maximal growth type $(d_M,\lambda_M)$ is at least quadratic, then $H_1,\dots,H_k$ is a malnormal family in $G$ (i.e.\ if $gH_ig^{-1}\cap H_j\ne\{1\}$, then $i=j$ and $g\in H_i$).

\end{theo}

 \begin{rema}\label{rk:question-rh}
We suspect that a similar statement should hold,    more generally, for toral relatively hyperbolic groups. However, our proof involves a limiting $\mathbb{R}$-tree obtained by iterating $\varphi$. In the toral relatively hyperbolic case, Paulin's construction would yield a limiting tree-graded space with CAT(0) pieces \emph{à la} Dru\c tu-Sapir \cite{Drutu:2005ab}. Adapting our arguments to such a space is beyond the scope of the present work. 
\end{rema}

\begin{rema}
 When $\varphi$ has polynomial growth, one may show that the malnormal family $H_1,\dots,H_k$ is a free factor system  (Y.\ Guerch, private communication).
\end{rema}

\begin{proof} 
We follow \cite{Paulin:1997ba} for 1 and 2. The only real novelty in our proof will be  the malnormality. 

Taking an ultralimit of $G$, equipped with the word metric divided by a suitable renormalizing factor $a_n$ and the natural  action of $G$ twisted by $\phi^n$, Paulin constructs an $\mathbb{R}$-tree $T$ equipped with a non-trivial isometric  very small action of $G$ extending to an action of $G\rtimes_\varphi \Z$ by bilipschitz homeomorphisms 
(he also constructs another tree on which $G\rtimes_\varphi \Z$ acts affinely, but his Remark 3.6 does not apply to it).

  Recall that the action is very small if arc stabilizers are cyclic, tripod stabilizers are trivial, and the fixed point set of $g^n$ is the same as that of $g$ for $n\geq 2$. By \cite{Guirardel:NA} point stabilizers are quasiconvex, and there are only finitely many orbits of branch points and branching directions.

According to \cite[Remarque~3.6]{Paulin:1997ba},
\begin{equation*}
    \lim_\omega \frac 1{a_n} \norm{\phi^n(g)} = \norm[T]g, \quad \forall g \in G,
\end{equation*}
where $\omega$ is the non-principal ultrafilter used to build $T$ and $\norm[T]g$ is the translation length of $g$ in $T$.
In particular, for the maximal growth type $(d_M,\lambda_M)$, we have  
\begin{equation*}
   0< \lim_\omega \frac {n^{d_M}\lambda_M^n}{a_n} <\infty.
\end{equation*}
In other words, $(a_n)$ captures the maximal growth type.
Hence, for any $g \in G$, the sequence $\norm{\phi^n(g)}$ has maximal growth   if and only if $g$ acts hyperbolically on $T$.
 
Let $H_1,\dots,H_k$ be representatives for conjugacy classes of maximal elliptic subgroups; there are finitely many of them 
because each $H_i$ fixes a branch point (there is no inversion because $T$ is very small), and  there are finitely many orbits of branch points by \cite{Guirardel:NA}.

The groups $H_i$
are quasiconvex by \cite{Guirardel:NA}, and self-normalizing   because $T$ is very small. Since $G\rtimes_\varphi \Z$ acts on $T$ by homeomorphisms, the set $[H_1],\dots, [H_k]$ is $\phi$-invariant. An element $g\in G$ is elliptic if and only if it is contained in a conjugate of some $H_i$, and 1, 2, 3 are proved. Uniqueness holds because any subgroup consisting of elements whose conjugacy class has growth type smaller than $(d_M,\lambda_M)$ is elliptic in $T$.

To show malnormality assuming supralinear growth, we choose for each $i$ a branch point $p_i$ fixed by $H_i$. We   suppose that there are two distinct branch points $v,w$, each in the orbit of some $p_i$, whose stabilizers $G_v,G_w$ intersect non-trivially, and we argue towards a contradiction. 

Because $T$ is very small, $G_v  \cap G_w$ is a maximal 
cyclic subgroup $Z$. If one of $G_v,G_w$ is equal to $Z$, the way we defined the groups $H_i$ and the points  $p_i$ implies that the other group also equals $Z$, and $v,w$ are in the same orbit. This is impossible because  $Z$ is self-normalizing. 

Thus none of $G_v,G_w$ is cyclic. Our next goal   is to show that there is  an automorphism $\psi$ in the outer class of a power $\phi^p$ leaving both $G_v$ and $G_w$ invariant (in the terminology of \cite{Dahmani:2022re},
 $G_v$ and $G_w$ are \emph{twin subgroups}). This will lead to a contradiction.

First note that  the arc $\geo vw$ only contains finitely many branch points: otherwise, the  finiteness of orbits of branching directions \cite{Guirardel:NA} yields a hyperbolic element normalizing $Z$, a contradiction.

Again using finiteness of orbits of branching directions, we   find   $\psi$ in the outer class of some $\phi^p$ whose action on $T$ fixes the initial edge $e$ of $\geo vw$. Since stabilizers of tripods are trivial and $\psi$ leaves $G_e$ (which is equal to $Z $)  invariant,   the whole arc is fixed. In particular, $\psi$ leaves both   $G_v$ and $G_w$ invariant.  
Up to replacing $\psi$ by $\psi^2$, we can also assume that $\psi$ is the identity on $Z$.

We claim that there exists $g_v \in G_v$, fixing a single point of $T$,  whose conjugacy class achieves the maximal growth type in $G_v$ under $\psi$
(this growth type is the same in $G_v$ or in $G$, see \autoref{rem: growth in qc sbgp}). Indeed, if some conjugacy class in $G_v$ is not $\psi$-periodic, we can choose $g_v$ in  any conjugacy class in $G_v$ with maximal growth: it cannot fix any edge because it would be $\psi$-periodic. If all conjugacy classes in $G_v$ are $\psi$-periodic, then the existence of $g_v$ follows from the fact that $G_v$ is non-abelian, while there are only finitely many conjugacy classes of incident edge stabilizers. 

We choose $g_w\in G_w$ similarly. 
We  now
consider the product $g_vg_w$. It acts hyperbolically on $T$, so by 1 and 2 of the theorem its conjugacy class grows strictly  faster than those of $g_v$ and $g_w$, and supralinearly by assumption.

If $\psi$ has finite order in both $\Out(G_v)$ and $\Out(G_w)$, then  $g_v,g_w,g_vg_w$ grow  at most linearly (as elements), contradicting supralinear growth   of $g_vg_w$. Otherwise, since $\psi$   fixes $Z$,   applying \autoref{pareil} to the restrictions of $\psi$ to $G_v$ and/or $G_w$ shows that the element $g_v$ (\resp $g_w$) grows linearly or has the same growth as its conjugacy class, which grows strictly slower  than   the class of $g_vg_w$. This contradiction completes the proof.
\end{proof}

\begin{coro}\label{theo:filtration}
Let $G$ be a torsion-free hyperbolic group, and let $\varphi\in\Aut(G)$. There exist  
  a finite rooted tree $\tau$ with root $v_0$  and,
 for every vertex $v$ of $\tau$, a (possibly trivial) quasiconvex subgroup $G_v\subseteq G$ and a growth type $(d_v, \lambda_v)$ with the following properties: 
\begin{enumerate}
\item $G_{v_0}=G$ and $(d_{v_0}, \lambda_{v_0})=(d_M,\lambda_M)$ is the maximal growth type of conjugacy classes under $\varphi$;
 \item if $w$ is a descendant of $v$, then $G_w\subsetneq G_v$ and $(d_w, \lambda_w)<(d_v,\lambda_v)$;
\item the conjugacy class of each $G_v$ is $\varphi$-periodic;
\item for every $g\in G$ which is conjugate into $G_v$ but not into $G_w$ for any child  $w$ of $v$, one has 
$\norm{\varphi^n(g)}\asymp n^{d_v} \lambda_v^n$; 
\item if the growth in $G_v$ is at least quadratic and $w$ is a child of $v$, then $G_w$ is malnormal.
\end{enumerate}
\end{coro}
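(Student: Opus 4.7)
The plan is to build $\tau$ by iterated application of \autoref{theo: prelim filtration}, inducting on the cardinality of the spectrum $\Lambda_\varphi$, which is finite by \autoref{main}. In the base case $\Lambda_\varphi=\{(0,1)\}$---which includes the case where $\varphi$ has finite order in $\out G$, forcing some power to be inner and all conjugacy classes to be bounded---the tree is a single vertex $v_0$ with $G_{v_0}=G$ and $(d_{v_0},\lambda_{v_0})=(0,1)$. For the inductive step, $(d_M,\lambda_M)>(0,1)$ forces $\varphi$ to have infinite order in $\out G$, so \autoref{theo: prelim filtration} applies and produces a $\varphi$-invariant collection $[H_1],\dots,[H_k]$ of conjugacy classes of proper quasiconvex subgroups collecting exactly those conjugacy classes growing strictly slower than $(d_M,\lambda_M)$.

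To recurse, I would fix representatives $H_i$ and an integer $p\ge 1$ making each $[H_i]$ invariant under $\varphi^p$, then pick $\psi_i\in\aut{H_i}$ in the outer class of $\varphi^p|_{H_i}$ (explicitly $\psi_i={\rm ad}_{g_i^{-1}}\circ\varphi^p$ for any $g_i\in G$ with $g_i H_i g_i^{-1}=\varphi^p(H_i)$). Quasiconvexity of $H_i$ in $G$ combined with \autoref{rem: growth in qc sbgp} identifies conjugacy growth of elements of $H_i$ computed in $H_i$ with that computed in $G$, and together with the power correspondence (\autoref{puiss}, \autoref{rem: spectrum-power}) gives
\begin{equation*}
\Lambda_{\psi_i}\subset\{(d,\lambda^p):(d,\lambda)\in\Lambda_\varphi\}\setminus\{(d_M,\lambda_M^p)\},
\end{equation*}
hence $|\Lambda_{\psi_i}|<|\Lambda_\varphi|$. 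Applying the induction hypothesis to each $(H_i,\psi_i)$ yields a tree $\tau_i$; graft the $\tau_i$'s onto a new root $v_0$ carrying the data $(G,(d_M,\lambda_M))$, translating growth labels inside $\tau_i$ via $(d,\mu)\mapsto(d,\mu^{1/p})$ so that they refer to $\varphi$ rather than to $\psi_i$. Choose representatives of conjugacy classes compatibly along each root-to-leaf path so that the inclusions $G_w\subsetneq G_v$ hold literally, and observe that quasiconvex-in-quasiconvex remains quasiconvex in the ambient hyperbolic group $G$.

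Verification of the five properties is then routine: (1) and (3) are built in by construction; (2) combines item (3) of \autoref{theo: prelim filtration} with the inductive nesting; (4) at the root is item (1) of \autoref{theo: prelim filtration} and at deeper levels is the induction hypothesis combined with the growth equivalences above; (5) is the ``moreover'' clause of \autoref{theo: prelim filtration}. I do not expect any serious obstacle---the geometric content sits entirely in \autoref{theo: prelim filtration}, and the corollary is essentially a mechanical unfolding. The only piece of bookkeeping to watch is that the powers of $\varphi$ required at successive levels may differ; however, since the tree has finite depth (bounded by $|\Lambda_\varphi|$), taking an lcm of all these powers produces a single $N$ for which $\varphi^N$ preserves the conjugacy class of every $G_v$, which suffices for the stated $\varphi$-periodicity.
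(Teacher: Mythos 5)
Your proposal is correct and follows essentially the same approach as the paper: apply \autoref{theo: prelim filtration} at the root, pass to powers of $\varphi$ and induced automorphisms $\psi_i$ on the quasiconvex subgroups $H_i$, and iterate, terminating because the spectrum $\Lambda_\varphi$ is finite. Your write-up formalizes the termination argument (paper: "the process stops after finitely many steps because there are only finitely many different growth types") as an induction on $|\Lambda_\varphi|$, and spells out the bookkeeping of powers and growth-type rescaling via \autoref{puiss} and \autoref{rem: growth in qc sbgp}, which the paper's brief sketch leaves implicit.
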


\begin{proof}
We start the construction with  $G_{v_0}$ and $(d_{v_0}, \lambda_{v_0})$ as in 1. If $\phi$ has finite order in $\Out(G)$, we stop there. Otherwise, the  children of $v_0$ carry  the groups $H_i$ provided by \autoref{theo: prelim filtration}. They are quasiconvex, hence hyperbolic, and we can iterate, using for each $i$ the automorphism $\psi_i$ of $H_i$ induced by a suitable representative of a power of $\phi$. This allows us to construct inductively a locally finite tree $\tau$.

The process stops after finitely many steps because  there are only finitely many different growth types  in $G$ under iteration of $\varphi$  by (\ref{enu: main - finiteness}) of \autoref{main}.  
\end{proof}

     Given a growth type $(d,\lambda)<(d_M,\lambda_M)$, one may consider the subgroups carried by vertices $w$, with parent $v$, such that $(d_w,\lambda_w)\le(d,\lambda)<(d_v,\lambda_v)$. This yields:

\begin{coro}\label{raff}
Given $(d,\lambda)\ne (0,1)$, there exists a finite malnormal family $K_1,\dots,K_p$ of quasiconvex subgroups such that a conjugacy class $\norm g$ grows at most like $n^d \lambda^n$ under $\varphi$ if and only if $g$ has a conjugate belonging to some $K_i$. This  is also true for $(d,\lambda)=(0,1)$ if no conjugacy class grows linearly.\qed
\end{coro}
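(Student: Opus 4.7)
The plan is to extract the desired family directly from the hierarchy tree $\tau$ produced by \autoref{theo:filtration}, whose vertices $v$ carry quasiconvex subgroups $G_v$ and growth types $(d_v,\lambda_v)$ strictly decreasing along descents. If $(d_M,\lambda_M)\leq(d,\lambda)$, then every conjugacy class has growth at most $n^d\lambda^n$ and the (trivially malnormal) family $\{G\}$ works. Otherwise, let $\mathcal{W}$ denote the finite antichain of vertices $w\in\tau$ satisfying $(d_w,\lambda_w)\leq(d,\lambda)$ while the parent of $w$ has growth type strictly greater than $(d,\lambda)$; define $\{K_1,\dots,K_p\}$ as a set of representatives of the $G$-conjugacy classes of $\{G_w:w\in\mathcal{W}\}$.

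The equivalence is read off the tree as follows. For any $g\in G$, start at $v_0$ and descend as long as $g$ is conjugate into the subgroup carried by some child of the current vertex; this produces a ‘stopping' vertex $v$, and property~(4) of \autoref{theo:filtration} gives $\norm{\varphi^n(g)}\asymp n^{d_v}\lambda_v^n$. So $[g]$ has growth at most $(d,\lambda)$ if and only if $(d_v,\lambda_v)\leq(d,\lambda)$. Since growth types strictly decrease along descents, this is equivalent to $v$ being a descendant of (or equal to) some vertex of $\mathcal{W}$, hence to $g$ being conjugate into $G_w\subseteq K_i$ for some $w\in\mathcal{W}$.

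For malnormality, the key observation is that $(d,\lambda)\neq(0,1)$, i.e.\ $(d,\lambda)\geq(1,1)$ in the growth-type order. Hence the parent of every vertex of $\mathcal{W}$ has growth type strictly greater than $(1,1)$, forcing it to be $(d',1)$ with $d'\geq 2$ or $(d',\lambda')$ with $\lambda'>1$; in either case the growth is at least quadratic, and the same holds for all higher ancestors. Property~(5) of \autoref{theo:filtration} therefore provides, at every internal vertex $v$ along any path from $v_0$ down to some $w\in\mathcal{W}$, malnormality of the family of children's subgroups in $G_v$. Iterating the standard inheritance principle---if $\{H_i\}$ is a malnormal family in $G$ and $\{K_{i,j}\}_j$ is malnormal in $H_i$ for every $i$, then $\bigcup_{i,j}\{K_{i,j}\}$ is malnormal in $G$---along $\tau$ from $v_0$ down to $\mathcal{W}$ yields malnormality of $\{K_1,\dots,K_p\}$ in $G$.

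The final case $(d,\lambda)=(0,1)$ under the hypothesis that no conjugacy class grows linearly reduces to the main statement applied with $(d',\lambda')=(1,1)$: the family so obtained catches exactly those $g$ whose conjugacy class has growth at most $(1,1)$, which under the hypothesis coincides with those of bounded growth. The genuine difficulty has been absorbed upstream, inside \autoref{theo: prelim filtration}, where malnormality is extracted from the Paulin limit $\mathbb{R}$-tree by combining the triviality of its tripod stabilizers with \autoref{pareil}; once that input is granted, the hierarchical assembly above is essentially bookkeeping.
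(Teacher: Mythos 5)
Your proof is correct and follows the same route as the paper: the paper's (one-line) argument defines the family by selecting exactly those vertices $w$ of the tree from \autoref{theo:filtration} whose growth type drops to at most $(d,\lambda)$ while the parent's exceeds it, which is precisely your antichain $\mathcal{W}$. You merely spell out the details the paper leaves implicit (the descent argument giving the equivalence, the use of $(d,\lambda)\geq(1,1)$ to guarantee at-least-quadratic growth at every ancestor so that property (5) applies, and the standard inheritance of malnormality down nested families), all of which check out.
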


\begin{rema}
Quasiconvexity and malnormality imply that $G$ is hyperbolic relative to the family $K_1,\dots,K_p$ \cite {Bowditch:2012ga, Osin:2006gm}. We do not know whether the mapping torus $G\rtimes_\varphi \Z$ is hyperbolic relative to the groups $K_i\rtimes_\varphi \Z$ when $\lambda>1$ (this is proved in  \cite {Dahmani:2023re} when $K_1,\dots,K_p$ is the family of polynomial subgroups, i.e.\ for $\lambda =1$ and $d$ large enough). Note that the family $K_i\rtimes_\varphi \Z$ is malnormal because there are no twin subgroups (see the proof of \autoref{theo: prelim filtration} and  Lemma 2.12 of \cite{Dowdall:2025la})
\end{rema}

 We now prove an analogue of \autoref{theo:filtration} for elements, getting a chain of subgroups rather than a tree.
 
 \begin{prop}\label{slqc}
There exists a sequence of quasiconvex $\varphi$-invariant subgroups $L_q\inc L_{q-1}\inc\dots \inc L_0=G$ such that all elements in $L_i\setminus L_{i+1}$ have the same growth type, and elements in $L_{i+1}$ have smaller growth. The subgroups $L_i$ are malnormal, except possibly   $L_q$ (the periodic subgroup).
\end{prop}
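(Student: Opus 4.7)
The plan is to mimic the argument of \autoref{theo: prelim filtration} and \autoref{theo:filtration}, working with element growth in place of conjugacy class growth. In this element setting the slow elements form a single subgroup—the stabilizer of a basepoint in a limit $\R$-tree—so we obtain a descending chain rather than a tree.

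Define the chain inductively: set $L_0 = G$; if every element of $L_i$ is $\phi$-periodic, stop with $q = i$; otherwise, let $(d_i^{el}, \lambda_i^{el})$ be the maximal element growth type achieved under $\phi$ by some $g \in L_i$, and set
\[
   L_{i+1} = \{g \in L_i : \abs{\phi^n(g)} \text{ grows strictly slower than } n^{d_i^{el}}(\lambda_i^{el})^n\}.
\]
Each $L_{i+1}$ is $\phi$-invariant because element growth types are preserved by $\phi$, and the first two properties in the statement are then immediate.

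The main technical step is proving that $L_{i+1}$ is quasiconvex in $L_i$ (hence in $G$ by transitivity of quasiconvexity among hyperbolic subgroups). I apply Paulin's construction \cite{Paulin:1997ba} to $L_i$ in its \emph{based} form: rescale the Cayley graph of $L_i$ by $a_n \asymp n^{d_i^{el}}(\lambda_i^{el})^n$ and take an ultralimit of the $\phi^n$-twisted actions, keeping the identity as basepoint. Non-triviality of the limit comes from the existence of an element of maximal growth. Standard Bestvina--Paulin arguments applied to the hyperbolic group $L_i$ produce a very small isometric $L_i$-action on an $\R$-tree $T_i$ whose basepoint $x_\infty$ is fixed by precisely those $g \in L_i$ with $\abs{\phi^n(g)}/a_n \to 0$; this set is exactly $L_{i+1}$. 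By \cite{Guirardel:NA}, point stabilizers in a very small $L_i$-tree are quasiconvex.

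For malnormality I argue as in the final part of \autoref{theo: prelim filtration}. If $(d_i^{el}, \lambda_i^{el})$ is at least quadratic and $L_{i+1}$ failed to be malnormal in $L_i$, two distinct points of $T_i$ would share a non-trivial common stabilizer, necessarily a maximal cyclic subgroup $Z$. Passing to a representative $\psi$ of a power of $\phi$ fixing $Z$ and choosing elements $g_v, g_w$ of maximal growth inside two twin non-cyclic stabilizers, one finds that $g_v g_w$ acts hyperbolically on $T_i$ and must therefore grow at rate $a_n$, whereas \autoref{pareil} forces both $g_v$ and $g_w$ to grow like their conjugacy classes, which is strictly slower—a contradiction. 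If instead $(d_i^{el}, \lambda_i^{el})$ is only linear, \autoref{res: word growth fixed conj class}(2) forces all conjugacy classes in $L_i$ to be bounded; hence $L_{i+1}$ is already the full periodic subgroup, which is the case in which malnormality is allowed to fail.

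Termination follows from finiteness of the element growth spectrum: each $L_{i+1}$ carries strictly fewer element growth types than $L_i$, and by \autoref{res: word growth fixed conj class} combined with \autoref{enplus} only finitely many types can occur. The hardest part will be verifying that the based Paulin limit $T_i$ is a very small isometric $L_i$-action and that the stabilizer of $x_\infty$ is exactly $L_{i+1}$; this requires adapting Paulin's conjugacy-length construction to element lengths and using hyperbolicity of $L_i$ to rule out large arc or tripod stabilizers.
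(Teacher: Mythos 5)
Your approach is genuinely different from the paper's, and while the broad outline is reasonable, it leaves a real gap that the paper's trick sidesteps entirely. The paper does not re-run a Paulin-type construction on $L_i$. Instead it extends $\varphi$ to $G*\grp t$ by $t\mapsto t$, invokes \autoref{theo: prelim filtration} there (whose limit tree $T$ rescales by the maximal \emph{conjugacy} growth in $G*\grp t$, which by \autoref{elts} is exactly the maximal \emph{element} growth in $G$), and then shows by Serre's lemma that $G_{sl}*\grp t$ is elliptic, so $G_{sl}=H_1\cap G$ for one of the already-constructed maximal elliptic subgroups $H_1$. Quasiconvexity and malnormality of $G_{sl}$ are then inherited directly from the corresponding properties of $H_1$ established in \autoref{theo: prelim filtration}, and the chain is built by iterating inside $G_{sl}$. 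This reuses the machinery verbatim and avoids any re-examination of the Bestvina--Paulin limit.

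Your plan instead proposes a \emph{based} ultralimit of the Cayley graph of $L_i$ rescaled by the element growth rate, with the identity as anchored basepoint, and you want the stabilizer of $x_\infty$ to be exactly $L_{i+1}$. The stabilizer identification is fine, but the claim that the resulting action is very small and that \cite{Guirardel:NA} applies to $\mathrm{Stab}(x_\infty)$ is not something you can just cite: Paulin's small/very small argument and Guirardel's finiteness and quasiconvexity results are stated for the standard (basepoint-optimizing, minimal) limit tree, and when you force the basepoint to be the identity, $x_\infty$ may \emph{a priori} lie outside the minimal subtree, in which case $\mathrm{Stab}(x_\infty)$ is an arc stabilizer over the projection and you would have to argue separately that it is cyclic (degenerate) or that $x_\infty$ lies in the minimal subtree. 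You acknowledge this as ``the hardest part,'' and it is; the $G*\grp t$ trick dissolves it because the point stabilizers that appear there are the canonical ones from \autoref{theo: prelim filtration}.

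There is a second issue in the malnormality step. In \autoref{theo: prelim filtration} the paper takes two distinct branch points $v,w$ in the orbits of the chosen $p_i$'s, uses finiteness of orbits of branching directions to find a representative $\psi$ of a power of $\varphi$ fixing the segment $[v,w]$, and only then extracts the contradiction via \autoref{pareil}. In your setting the two points are $x_\infty$ and $gx_\infty$ (both in the $G$-orbit of the basepoint), and it is not clear that a power of $\varphi$ fixes the arc $[x_\infty, gx_\infty]$, nor that the stabilizers $L_{i+1}$ and $gL_{i+1}g^{-1}$ are simultaneously $\psi$-invariant for a common $\psi$; the invariance is essential to apply \autoref{pareil} to both restrictions. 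This part would need to be reworked, whereas the paper inherits malnormality from $H_1$ in $G*\grp t$ for free.
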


\begin{proof}
 The existence is clear: we consider the growth types occuring for elements, and   the $\varphi$-invariant subgroups consisting of elements whose growth type is not bigger than a given type. We now deduce the  quasiconvexity and malnormality of $L_i$  from \autoref {theo: prelim filtration} applied in $G*\Z$.
 
 It suffices to consider $L_{1}$, which is equal to the ``slow'' subgroup $G_{sl}$ introduced at the end of \autoref{words}. 
 We may assume that it is not trivial.
 
We extend $\varphi$ to $G*\grp{t}$ by sending $t$ to itself, and we consider  the \Rt{} $T$ constructed in the proof of \autoref {theo: prelim filtration}. The elements of $G_{sl}*\grp{t}$ grow slower than the conjugacy class of $tg$ for $g\in G\setminus G_{sl}$, so 
are contained in a conjugate of one of the subgroups $H_i$ provided by the theorem. 

These subgroups are maximal elliptic subgroups, and by Serre's lemma \cite{Serre:1977wy} $G_{sl}*\grp{t}$ itself must be contained in a single conjugate, which we may assume to be $H_1$ (we do not need to know that $G_{sl}$ is finitely generated, as any non-cyclic finitely generated subgroup of $G_{sl}*\grp{t}$ fixes a single point in $T$). 

Clearly $G_{sl} \inc H_1\cap G$. Conversely, if $g\in H_1\cap G$, then the conjugacy class of $tg$ does  not have maximal growth (as a conjugacy class), so $g$ does not have maximal growth (as an element), hence belongs to $G_{sl}$. Thus  $G_{sl}= H_1\cap G$, and we deduce the 
quasiconvexity of  $G_{sl}$ (and its malnormality if the growth in $G$ is supralinear) from the corresponding properties of 
  $H_1$ stated in \autoref {theo: prelim filtration}.
\end{proof}

We may go further if we assume that some element of $G$ grows faster than $(d_M,\lambda_M)$ (the maximal growth type of conjugacy classes). 
Recall from \autoref{res: word growth fixed conj class} that this phenomenon may   occur only when $\lambda_M = 1$.

If this happens, then $G$ itself is elliptic in $T$. It follows that $G_{sl}$ is cyclic, as otherwise it would fix a unique point in $T$, so $G *\grp{t}$ would be elliptic and $T$ would be trivial. If $G_{sl}$ is not trivial, $\varphi^2$ has a non-trivial fixed subgroup, hence has finite order in $\Out(G)$ by \autoref{pareil}. We have proved:

\begin{coro}\label{bg}
If $\varphi$ has infinite order in $\Out(G)$, and some element of $G$ grows faster than $(d_M,\lambda_M)$ under $\varphi$, then all non-trivial elements of $G$ have the same growth type. \qed
\end{coro}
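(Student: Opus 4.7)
The plan is to unfold the sketch given in the text immediately before the statement. By \autoref{res: word growth fixed conj class}(2), the hypothesis that some element grows strictly faster than $(d_M,\lambda_M)$ forces $\lambda_M=1$. Following the proof of \autoref{slqc}, I extend $\varphi$ to $G*\grp{t}$ by $\varphi(t)=t$ and work with Paulin's $\R$-tree $T$ constructed there. The preliminary observation is that the conjugacy length of $tg$ in $G*\grp{t}$ is comparable to $\abs g$ in $G$, so the renormalization factor $a_n$ actually captures the maximal element growth in $G$; by hypothesis this strictly exceeds $(d_M,\lambda_M)$, and hence $T$ is nontrivial, with $tg$ acting hyperbolically whenever $g$ realizes the maximal element growth.

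The first substantive step is to show that all of $G$ acts elliptically on $T$. For any $g\in G$, its conjugacy length in $G*\grp{t}$ is comparable to $\norm[G]g$, which grows at most like $n^{d_M}$, strictly slower than $a_n$; Paulin's limit formula \cite[Remarque~3.6]{Paulin:1997ba} then forces $\norm[T]g=0$, so $g$ is elliptic. Since the product of any two elements of $G$ again lies in $G$ and is therefore elliptic, the fixed subtrees $\Fix_T(g)$ meet pairwise, and the Helly property for closed convex subsets of an $\R$-tree provides a common fixed point for the finitely many generators of $G$, hence for all of $G$.

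Next I argue that the slow subgroup $G_{sl}$ from \autoref{slqc} must be cyclic. The proof of that proposition already gives $G_{sl}*\grp{t}\subset H_1=\Stab_{G*\grp{t}}(p)$ for some $p\in T$. If $G_{sl}$ were non-cyclic, then being torsion-free hyperbolic it would be non-abelian; since $T$ is very small (tripod stabilizers are trivial), a non-abelian elliptic subgroup must fix a unique point, which is forced to be $p$. The global fixed set of $G\supset G_{sl}$ would then be contained in $\Fix_T(G_{sl})=\{p\}$, so $G$ itself would fix $p$; combined with $t$ fixing $p$, this would make $G*\grp{t}$ globally elliptic, contradicting the hyperbolicity of $tg$ established above.

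Finally, suppose $G_{sl}=\grp h$ with $h\neq 1$. Then $\varphi$ preserves $\grp h$, so $\varphi(h)\in\{h,h^{-1}\}$ and $\varphi^2$ fixes $h$; since $\varphi^2$ still has infinite order in $\Out(G)$, \autoref{pareil} applied to $\varphi^2$ prevents elements from growing strictly faster than conjugacy classes under $\varphi^2$, and by \autoref{puiss} this transfers to $\varphi$, contradicting the hypothesis. Hence $G_{sl}=\{1\}$, which by the definition of $G_{sl}$ in \autoref{words} means that every non-trivial element of $G$ realizes the common maximal element growth type. I expect the main conceptual difficulty to lie in combining the per-element ellipticity into a genuine global fixed point of $G$ on the nontrivial tree $T$, since it is this combination that drives the contradictions in both the cyclic-reduction and the triviality steps.
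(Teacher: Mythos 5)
Your proof is correct and follows the same route the paper takes (the paragraph immediately preceding the corollary): ellipticity of $G$ in Paulin's tree $T$ for the extended automorphism on $G*\grp t$, cyclicity of $G_{sl}$, and then triviality of $G_{sl}$ via Lemma~\ref{pareil} applied to $\varphi^2$. You have simply filled in the details that the paper leaves implicit — in particular the upgrade from per-element ellipticity of $G$ to a global fixed point (Serre's lemma / Helly), the reason $\Fix_T(G_{sl})$ is a single point when $G_{sl}$ is non-abelian, and the explicit use of \autoref{puiss} to transfer the ``elements grow faster than classes'' hypothesis from $\varphi$ to $\varphi^2$ before invoking the contrapositive of \autoref{pareil}. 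Your final step is in fact a cleaner statement than the paper's somewhat compressed ``hence has finite order in $\Out(G)$ by \autoref{pareil}''.
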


In \autoref{bgex}, all non-trivial elements grow quadratically.

\bibliographystyle{abbrv}
\bibliography{bibliography}

\begin{thebibliography}{10}

\bibitem{Andrew:2023ho}
N.~Andrew, Y.~Guerch, S.~Hughes, and M.~Kudlinska.
\newblock Homology growth of polynomially growing mapping tori.
\newblock {\em Groups, Geometry, and Dynamics}, 2025, published online first.

\bibitem{Bestvina:2023fa}
M.~Bestvina, K.~Fujiwara, and D.~Wigglesworth.
\newblock The {{Farrell-Jones}} conjecture for hyperbolic-by-cyclic groups.
\newblock {\em IMRN. International Mathematics Research Notices},
  2023(7):5887--5904, 2023.

\bibitem{Bowditch:2012ga}
B.~H. Bowditch.
\newblock Relatively hyperbolic groups.
\newblock {\em International Journal of Algebra and Computation},
  22(3):1250016--1250066, 2012.

\bibitem{Bridson:2010qu}
M.~R. Bridson and D.~Groves.
\newblock The quadratic isoperimetric inequality for mapping tori of free group
  automorphisms.
\newblock {\em Memoirs of the American Mathematical Society}, 203(955):xii+152,
  2010.

\bibitem{Bridson:1999ky}
M.~R. Bridson and A.~Haefliger.
\newblock {\em Metric spaces of non-positive curvature}, volume 319 of {\em
  Grundlehren der mathematischen Wissenschaften [Fundamental Principles of
  Mathematical Sciences]}.
\newblock Springer-Verlag, Berlin, 1999.

\bibitem{Coornaert:1990tj}
M.~Coornaert, T.~Delzant, and A.~Papadopoulos.
\newblock {\em G{\'e}om{\'e}trie et Th{\'e}orie Des Groupes}, volume 1441 of
  {\em Lecture Notes in Mathematics}.
\newblock Springer-Verlag, Berlin, 1990.

\bibitem{Coulon:2022ab}
R.~Coulon.
\newblock Examples of groups whose automorphisms have exotic growth.
\newblock {\em Algebraic and Geometric Topology}, 22(4):1497--1510, 2022.

\bibitem{Dahmani:2003hm}
F.~Dahmani.
\newblock Combination of convergence groups.
\newblock {\em Geometry \& Topology}, 7(2):933--963 (electronic), 2003.

\bibitem{Dahmani:2023re}
F.~Dahmani and S.~Krishna M~S.
\newblock Relative hyperbolicity of hyperbolic-by-cyclic groups.
\newblock {\em Groups, Geometry, and Dynamics}, 17(2):403--426, Jan. 2023.

\bibitem{Dahmani:2022re}
F.~Dahmani and R.~Li.
\newblock Relative hyperbolicity for automorphisms of free products and free
  groups.
\newblock {\em Journal of Topology and Analysis}, 14(01):55--92, Mar. 2022.

\bibitem{Dowdall:2025la}
S.~Dowdall, Y.~Guerch, R.~Gupta, J.-P. Mutanguha, and C.~Uyanik.
\newblock Depth of free-by-cyclic groups.
\newblock {\em In preparation}, 2025.

\bibitem{Drutu:2005ab}
C.~Dru{\c t}u and M.~V. Sapir.
\newblock Tree-graded spaces and asymptotic cones of groups.
\newblock {\em Topology. An International Journal of Mathematics},
  44(5):959--1058, 2005.

\bibitem{Dunwoody}
M.~J. Dunwoody.
\newblock Folding sequences.
\newblock In {\em The {E}pstein birthday schrift}, volume~1 of {\em Geom.
  Topol. Monogr.}, pages 139--158. Geom. Topol. Publ., Coventry, 1998.

\bibitem{Farb:2012ws}
B.~Farb and D.~Margalit.
\newblock {\em A Primer on Mapping Class Groups}, volume~49 of {\em Princeton
  Mathematical Series}.
\newblock Princeton University Press, Princeton, NJ, 2012.

\bibitem{Fathi:1983dy}
A.~Fathi and F.~Laudenbach.
\newblock The dynamics of the lift of a pseudo-{A}nosov diffeomorphism to the
  {P}oincar\'e disc.
\newblock In {\em R\'eseaux ferroviaires, diff\'eomorphismes pseudo-{{Anosov}}
  et automorphismes symplectiques de l'homologie d'une surface}, volume 83(03).
  Publications math\'ematiques d'Orsay,
  \url{http://sites.mathdoc.fr/PMO/PDF/P_PAPADOPOULOS-105.pdf}, 1983.

\bibitem{Feighn:2011tt}
M.~Feighn and M.~Handel.
\newblock The {R}ecognition {T}heorem for {{Out}}({F}\textsubscript{n}).
\newblock {\em Groups Geom. Dyn.}, 5(1):39--106, 2011.

\bibitem{Fio}
E.~Fioravanti.
\newblock Growth of automorphisms of virtually special groups, 2025.
\newblock arXiv:2502.13865.

\bibitem{Gerasimov:2016uc}
V.~Gerasimov and L.~Potyagailo.
\newblock Quasiconvexity in relatively hyperbolic groups.
\newblock {\em Journal für die Reine und Angewandte Mathematik}, 710:95--135,
  2016.

\bibitem{Groves:2009bj}
D.~Groves.
\newblock Limit groups for relatively hyperbolic groups. {{I}}. {{The}} basic
  tools.
\newblock {\em Algebraic \& Geometric Topology}, 9(3):1423--1466, 2009.

\bibitem{Guirardel:NA}
V.~Guirardel and G.~Levitt.
\newblock Structure and finiteness for superstable actions on {$\mathbb
  R$}-trees.
\newblock {\em in preparation}.

\bibitem{Guirardel:2015fi}
V.~Guirardel and G.~Levitt.
\newblock Splittings and automorphisms of relatively hyperbolic groups.
\newblock {\em Groups, Geometry, and Dynamics}, 9(2):599--663, 2015.

\bibitem{Guirardel:2014bx}
V.~Guirardel and G.~Levitt.
\newblock {{McCool}} groups of toral relatively hyperbolic groups.
\newblock {\em Algebraic \& Geometric Topology}, 15(6):3485--3534, Jan. 2016.

\bibitem{Guirardel:2016do}
V.~Guirardel and G.~Levitt.
\newblock Vertex finiteness for splittings of relatively hyperbolic groups.
\newblock {\em Israel Journal of Mathematics}, 212(2):729--755, 2016.

\bibitem{Guirardel:2017te}
V.~Guirardel and G.~Levitt.
\newblock {{JSJ}} decompositions of groups.
\newblock {\em Ast{\'e}risque}, (395):vii--165, 2017.

\bibitem{Hruska:2010iw}
G.~C. Hruska.
\newblock Relative hyperbolicity and relative quasiconvexity for countable
  groups.
\newblock {\em Algebraic \& Geometric Topology}, 10(3):1807--1856, 2010.

\bibitem{Kapovich:2001uf}
M.~Kapovich.
\newblock {\em Hyperbolic Manifolds and Discrete Groups}, volume 183 of {\em
  Progress in Mathematics}.
\newblock Birkh{\"a}user Boston Inc., Boston, MA, 2001.

\bibitem{Levitt:1983wg}
G.~Levitt.
\newblock Foliations and laminations on hyperbolic surfaces.
\newblock {\em Topology}, 22:119--135, 1983.

\bibitem{Levitt:2009hx}
G.~Levitt.
\newblock Counting growth types of automorphisms of free groups.
\newblock {\em Geometric and Functional Analysis}, 19(4):1119--1146, 2009.

\bibitem{Levitt:2008go}
G.~Levitt and M.~Lustig.
\newblock Automorphisms of free groups have asymptotically periodic dynamics.
\newblock {\em Journal f{\"u}r die Reine und Angewandte Mathematik. [Crelle's
  Journal]}, 619:1--36, 2008.

\bibitem{Lyman:2022wz}
R.~A. Lyman.
\newblock {{CTs}} for free products.
\newblock arXiv 2203.08868, 2022.

\bibitem{Morgan:1991en}
J.~W. Morgan and P.~B. Shalen.
\newblock Free actions of surface groups on {{R-trees}}.
\newblock {\em Topology. An International Journal of Mathematics},
  30(2):143--154, 1991.

\bibitem{Osin:2006gm}
D.~V. Osin.
\newblock Elementary subgroups of relatively hyperbolic groups and bounded
  generation.
\newblock {\em International Journal of Algebra and Computation},
  16(1):99--118, 2006.

\bibitem{Osin:2006cf}
D.~V. Osin.
\newblock Relatively hyperbolic groups: Intrinsic geometry, algebraic
  properties, and algorithmic problems.
\newblock {\em Memoirs of the American Mathematical Society}, 179(843):vi--100,
  2006.

\bibitem{Paulin:1997ba}
F.~Paulin.
\newblock Sur les automorphismes ext{\'e}rieurs des groupes hyperboliques.
\newblock {\em Annales Scientifiques de l'{\'E}cole Normale Sup{\'e}rieure.
  Quatri{\`e}me S{\'e}rie}, 30(2):147--167, 1997.

\bibitem{Scott:1979uk}
P.~Scott and T.~Wall.
\newblock Topological methods in group theory.
\newblock Homological group theory, Proc. Symp., Durham 1977, Lond. Math. Soc.
  Lect. Note Ser. 36, 137-203 (1979)., 1979.

\bibitem{Serre:1977wy}
J.-P. Serre.
\newblock {\em Arbres, Amalgames, {{SL2}}}, volume~46.
\newblock Soci{\'e}t{\'e} Math{\'e}matique de France, Paris, 1977.

\bibitem{shorScottConjectureHyperbolic}
J.~Shor.
\newblock A {S}cott {C}onjecture for {H}yperbolic {G}roups.
\newblock {\em unpublished}, https://levitt.users.lmno.cnrs.fr/shorscott.pdf,
  1999.

\bibitem{Yang:2020ub}
W.~Yang.
\newblock Genericity of contracting elements in groups.
\newblock {\em Mathematische Annalen}, 376(3-4):823--861, 2020.

\end{thebibliography}

\vspace{0.5cm}

\normalsize

\noindent{\textsc{Université Bourgogne Europe, CNRS, IMB UMR 5584, 21000 Dijon, France}} \\
\noindent{\textit{E-mail address:} \texttt{remi.coulon@cnrs.fr}} \\

\noindent{\textsc{Institut Mathématique de Toulouse; UMR 5219; Université de Toulouse;
CNRS; UPS, F-31062 Toulouse Cedex 9, France}} \\
\noindent{\textit{E-mail address:} \texttt{arnaud.hilion@math.univ-toulouse.fr}} \\

\noindent{\textsc{Universit\'e Paris-Saclay, CNRS,  Laboratoire de math\'ematiques d'Orsay, 91405, Orsay, France}} \\
\noindent{\textit{E-mail address:} \texttt{camille.horbez@universite-paris-saclay.fr}\\

\noindent{\textsc{Laboratoire de Math\'ematiques Nicolas Oresme (LMNO)\\
Universit\'e de Caen et CNRS (UMR 6139)\\
(Pour Shanghai : Normandie Univ, UNICAEN, CNRS, LMNO, 14000 Caen, France)}} \\
\noindent{\textit{E-mail address:} \texttt{levitt@unicaen.fr}\\

\end{document}